\small\color{black},
\ttfamily\color{gray},
\ttfamily\color{gray}\footnotesize,
\theoremstyle{plain}
\newtheorem{theorem}{Theorem}[section] 
\newtheorem{proposition}{Proposition}[section] 
\newtheorem{lemma}{Lemma}[section]
\newtheorem{corollary}{Corollary}[section] 
\theoremstyle{remark}
\newtheorem{assumption}{Assumption}
\newtheorem{definition}{Definition}[section]
\newtheorem*{example}{Example}
\newtheorem{remark}{Remark}[section]
\newtheorem*{model}{Model}
\newcommand{\E}{\mathop{\operatorname{E}\/}}
\newcommand{\Var}{\mathop{\operatorname{Var}\/}}
\newcommand{\Cov}{\mathop{\operatorname{Cov}\/}}
\newcommand{\e}{{\operatorname{e}}}
\newcommand{\sign}{{\operatorname{sign}}}
\newcommand{\pr}[1]{\left(#1\right)}
\newcommand*{\defeq}{\mathrel{\rlap{%
                     \raisebox{0.3ex}{$\m@th\cdot$}}%
                     \raisebox{-0.3ex}{$\m@th\cdot$}}%
                     =}
\newcommand*{\eqdef}{=\mathrel{\rlap{%
                     \raisebox{0.3ex}{$\m@th\cdot$}}%
                     \raisebox{-0.3ex}{$\m@th\cdot$}}%
                     }
\begin{document}

\begin{frontmatter}

\title{Rank-based change-point analysis for long-range dependent time series\thanksref{T1}}
\runtitle{Rank-based change-point analysis}
 \runauthor{A. Betken, M. Wendler}
\thankstext{T1}{Research supported by 
  Collaborative Research Center SFB 823 {\em Statistical modelling of nonlinear dynamic processes}.
 }

\begin{aug}
\author[A]{\fnms{Annika} \snm{Betken}\ead[label=e1]{annika.betken@rub.de}}
\and
\author[B]{\fnms{Martin} \snm{Wendler}\ead[label=e2]{martin.wendler@ovgu.de}}
\address[A]{Faculty of Mathematics,
Ruhr-Universit{\"a}t Bochum,
 \printead{e1}}

\address[B]{Faculty of Mathematics, Otto-von-Guericke-Universit{\"a}t Magdeburg, \printead{e2}}
\end{aug}

\begin{abstract}
We consider change-point tests based on rank statistics to test for structural changes in long-range dependent observations. Under the hypothesis of stationary time series and under the assumption of a  change with decreasing change-point height, the asymptotic distributions of  corresponding test statistics are derived. For this, a uniform reduction principle for the sequential empirical process in a two-parameter Skorohod space equipped with a weighted supremum norm is proved. Moreover, we compare the efficiency of rank tests resulting from the consideration of different score functions. Under Gaussianity, the asymptotic relative efficiency of  rank-based tests with respect to the CuSum test is 1, irrespective of the score function.
Regarding the practical implementation of rank-based change-point tests, we suggest to combine self-normalized rank statistics with subsampling. The theoretical results are accompanied by simulation studies that, in particular, allow for a comparison of rank tests resulting from different score functions. With respect to the finite sample performance of rank-based change-point tests,  
the Van der Waerden rank test proves to be 
favorable in a broad range of situations.
Finally, we analyze data sets from economy, hydrology, and  network traffic monitoring in view of structural changes and compare our results to previous analysis of the data.
\end{abstract}

\begin{keyword}[class=MSC2010]
\kwd[Primary ]{62G10}
\kwd{ 	62G30}
\kwd[; secondary ]{62G20} 		
{62G35} 
\end{keyword}

\begin{keyword}
\kwd{rank statistic}
\kwd{change-point}
\kwd{long memory}
\kwd{self-normalization}
\kwd{subsampling}
\kwd{empirical process}
\kwd{asymptotic relative efficiency}
\end{keyword}

\end{frontmatter}

\tableofcontents


\section{Introduction}\label{sec:intro}

Let $X_1, \ldots, X_n$ be random variables and let $F_i$, $i=1, \ldots n$, denote the marginal distribution functions of $X_i$, $i=1, \ldots n$.
If $F_{k}\neq F_{k+1}$
for some   $k\in\left\{1, \ldots, n-1\right\}$,
we say that there is a {\em change-point} in $k$ and we refer to $k$ as the {\em time of change}.  
The 
testing problem 
\begin{align*}
H: \ &F_1=F_2=\cdots=F_n
\intertext{against}
A: \
&F_1=F_2=\cdots=F_k\not=F_{k+1}=F_{k+2}=\cdots=F_n\\
&\text{for some } \ k\in\left\{1, \ldots, n-1\right\}
\end{align*}
is called {\em change-point problem}.

The most frequently considered change-point problems relate to the identification of shifts in the mean value of time series. Writing
\begin{align*}
X_n=\mu_n+Y_n
\end{align*} 
for a sequence  of unknown constants  $\mu_n$, $n\in \mathbb{N}$, and a mean-zero stochastic process $Y_n, \ n\in \mathbb{N}$,
a {\em change-point in the location} of the time series $X_n$, $n\in \mathbb{N}$,  is characterized by the sequence   $\mu_n$, $n\in \mathbb{N}$,  satisfying
\begin{align*}
\mu_i=\begin{cases}
\mu    \ \ &\text{for} \ i=1, \ldots, k, \\
\mu + h_n    \ \ &\text{for} \ i=k+1, \ldots, n
\end{cases}
\end{align*}
for some $k=\lfloor n\tau\rfloor$, $0<\tau<1$, denoting the time of change,  
and a deterministic sequence of  {\em shift heights}  $h_n$, $n\in \mathbb{N}$,  with $h_n\neq 0$
for all $n\in \mathbb{N}$.
If the sequence of shift-heights converges to $0$,  i.e.,  $\lim_{n\rightarrow\infty}h_n=0$, we refer to {\em local changes} and {\em local alternatives}, respectively.

Motivated by change-point tests for the change-in-location problem based on the consideration of the partial sums
\begin{align*}
\sum\limits_{i=1}^k\left(X_i-\bar{X}_n\right), \ \bar{X}_n=\frac{1}{n}\sum\limits_{i=1}^nX_i, 
\end{align*}
 i.e.,  {\em CuSum-tests}, 
we consider a class of change-point tests  based on rank statistics
\begin{align*}
S_{k, n}(a)\defeq\sum\limits_{i=1}^k\left(a(R_i)-\bar{a}_n\right), \ \bar{a}_n=\frac{1}{n}\sum\limits_{i=1}^na(i),
\end{align*}
where  $a=(a(1), \ldots, a(n))$ is a vector of scores, and  $R_i=\sum_{j=1}^{n}1_{\left\{X_j\leq X_i\right\}}$ denotes the rank of  $X_i$ among $X_1, \ldots, X_n$.

Rank statistics for change-point detection have been studied for over 50 years, starting with \cite{bhattacharyya:johnson:1968}, \cite{sen:1978} and \cite{lombard:1987}. Given  independent data-generating   random variables with a change in location, the statistical properties of rank-based statistics have been investigated by \cite{praagman:1988}, \cite{gombay1994testing} and \cite{gombay:huskova:1998}.

Under the assumption that the  time of change is unknown under the alternative, 
it seems natural to 
consider the statistics $\left|S_{k,n}(a)\right|$  for every possible time of change $k$
and to decide in favor of the alternative hypothesis $A$ if the maximum exceeds a predefined critical value.
As a result,  change-point tests base test decisions on the values of the statistics
\begin{align}\label{eq:test_statistic}
S_n(a)\defeq\max\limits_{1\leq k< n}
\left|S_{k, n}(a)\right|.
\end{align}

Choosing $a(i)=i$, a short calculation yields
\begin{align*}
\sum\limits_{i=1}^k\left(a(R_i)-\bar{a}_n\right)
=\sum\limits_{i=1}^k\sum\limits_{j=k+1}^{n}\Big(1_{\left\{X_j\leq X_i\right\}}-\frac{1}{2}\Big),
\end{align*}
 i.e.,  this score results in the Wilcoxon-two-sample statistic. \cite{darkhovskh:1976}, \cite{pettitt:1979} and \cite{wolfe:schechtman:1984} study  Wilcoxon-type change-point statistics under the assumption of independent data-generating variables. For long-range dependent time series, \cite{wang:2008}, \cite{dehling:rooch:taqqu:2013a} and \cite{dehling:rooch:taqqu:2017} characterize the asymptotic behavior of change-point tests that are based on the two-sample Wilcoxon statistic.  A self-normalized version of the Wilcoxon-type change-point test is proposed by \cite{betken:2016}.

To the best of our knowledge, for dependent data there do not yet exist results for rank-based change-point tests stemming from general score functions. The aim of this paper is to study the (asymptotic and finite sample) behavior of general rank statistics under long-range dependence. This allows for an application of other score functions, including the Median test (choosing  $a(i)=\sign(i-\frac{n+1}{2})$) and the Van der Waerden test (choosing  $a(i)=\phi^{-1}(\frac{i}{n+1})$). We will use weighted empirical processes to determine the limit distribution of rank statistics following an approach in \cite{pyke:shorack:1968}. For independent data, this techniques are considered in the context of change-point detection  by \cite{szyszkowicz:1994}.

Section \ref{sec:pre} introduces the mathematical framework of weighted Skorohod spaces and subordinated Gaussian processes. The main results on the asymptotic behavior of rank statistics under the hypothesis and under local alternatives follow in Section \ref{sec:main}. We discuss self-normalization and subsampling as means of a practical implementation  of change-point tests in Section \ref{sec:practical}. Section \ref{sec:simu} contains  simulation studies that give insight into the finite sample behavior of rank-based change-point tests. Real life data sets are discussed in Section \ref{sec:data}. The proofs of our theoretical results and additional simulation results can be found in the appendix.

\section{Preliminaries}\label{sec:pre}

Given dependent data, the exact distribution of the   statistic $S_n(a)$ is unknown and, in general, hard to obtain. For this reason, test decisions are based on a comparison of the value of the test statistic with quantiles  of its limit distribution.
For the determination of the asymptotic distribution of the   statistic $S_n(a)$, it is useful to note that
for any function $h:(0,1)\longrightarrow \mathbb{R}$  satisfying $h\pr{\frac{i}{n+1}}=a(i)$
we have
\begin{align*}
S_{k, n}(a)&=\sum\limits_{i=1}^ka(R_i)-\frac{k}{n}\sum\limits_{i=1}^na(i)\\
&=\sum\limits_{i=1}^{k}\left(h\left(\frac{1}{n+1}R_{i}\right)-\frac{1}{n}\sum\limits_{i=1}^{n}h\left(\frac{1}{n+1}R_{i}\right)\right)\\
&=\int_0^1 h(x)d\left(\hat{G}_{k}(x)-\frac{k}{n}\hat{G}_{n}(x)\right),
\end{align*}
where $\hat{G}_{k}(x)\defeq\sum_{i=1}^{k}1_{\left\{\frac{1}{n+1}R_i\leq x\right\}}$ is the empirical distribution function of the (rescaled) ranks. Under an additional assumption, introduced in Section \ref{sec:weighted_space}, we can use integration by parts (see Lemma B.1 in \cite{beutner:zaehle:2012}) to further simplify the above representation, so that 
\begin{align*}
S_{k, n}(a)=\int_0^1 h(x)d\left(\hat{G}_{k}(x)-\frac{k}{n}\hat{G}_{n}(x)\right)=-\int_0^1 \left(\hat{G}_{k}(x-)-\frac{k}{n}\hat{G}_{n}(x-)\right)dh(x).
\end{align*}

\subsection{Weighted Skorohod space}\label{sec:weighted_space}

In order to derive the asymptotic distribution of the test statistic $S_n(a)$ defined by \eqref{eq:test_statistic}, we consider the  process 
\begin{align*}
\left(\hat{G}_{k}(x-)-\frac{k}{n}\hat{G}_{n}(x-)\right), \ x\in [0, 1],
\end{align*}
 as an element of the space $D[0, 1]$,  i.e.,   the set of all  functions on  $[0, 1]$ which are right-continuous and have left limits, and the statistic
$S_{k, n}(a)$ as the image of this process under the mapping  $g:D[0, 1]\longrightarrow \mathbb{R}$,  $f\mapsto \int_{0}^1 f(x)dh(x)$.
It is important to note that this function is not necessarily continuous with respect to the supremum norm on $D[0,1]$. In particular, 
the  function $g$ is unbounded  for  $h=\Phi^{-1}$,  i.e.,  when considering  the Van  der Waerden test statistic, and, as a linear functional, consequently nowhere continuous. As a result, we must not apply the continuous mapping theorem without further discussion. For this reason, we introduce the weighted supremum norm $\|\cdot\|_{\lambda}$ on $D[0,1]$, defined by
\begin{align*}
\left\|f\right\|_{\lambda}\defeq\sup_{x\in[0,1]}\left|(\min\{x,1-x\})^{-\lambda}f(x)\right|,
\end{align*}
and we consider the space
\begin{align*}
D_{\lambda}[0, 1]\defeq\left\{f\in D[0, 1]: \left\|f\right\|_{\lambda}<\infty\right\}.
\end{align*}
Note that
\begin{align*}
\left|\int_0^1 f(x)dh(x)-\int_0^1 g(x)dh(x)\right|
&\leq \int_0^1|f(x)-g(x)|d\bar{h}(x)\\
&\leq \|f-g\|_\lambda \int_0^1(\min\{x,1-x\})^{\lambda}d\bar{h}(x),
\end{align*}
where we define the function $\bar{h}:[0,1]\rightarrow \mathbb{R}$ by
\begin{align}\label{def:hbar}
\bar{h}(x)\defeq\begin{cases}V_{1/2}^x(h)\ \ &\text{for}\ x\geq 1/2\\-V_{x}^{1/2}(h)\ \ &\text{for}\ x< 1/2\end{cases}
\end{align}
with $V_a^b(f)$ denoting the total variation of a function $f$ over the interval $[a,b]$.  For this reason, we impose the following assumption:
 \begin{assumption}\label{ass:finite_integral} We assume that for $\bar{h}:[0,1]\rightarrow \mathbb{R}$ defined by \eqref{def:hbar} and some $\lambda\in(0,\frac{1}{3})$
\begin{align*}
\int_0^1 (\min\{x,1-x\})^{\lambda}d\bar{h}(x)<\infty.
\end{align*}
\end{assumption}
Given Assumption \ref{ass:finite_integral}, the mapping $f\mapsto \int_0^1f(x)dh(x)$ is continuous. Moreover,   the process $\hat{G}_{k}(x-)-\frac{k}{n}\hat{G}_{n}(x-)$, $x\in[0,1]$,
takes values in  $D_{\lambda}[0, 1]$ almost surely.
Due to continuity of $g$ with respect to $\|\cdot\|_{\lambda}$, convergence in distribution will follow from the continuous mapping theorem and (after rescaling)  convergence of $\hat{G}_{k}(x-)-\frac{k}{n}\hat{G}_{n}(x-)$, $x\in[0,1]$, in $D_{\lambda}[0, 1]$. 

The following example shows that the Van  der Waerden score function satisfies Assumption \ref{ass:finite_integral}:  
\begin{example}
 Assumption \ref{ass:finite_integral} holds  for the score function $\Phi^{-1}$ and for any $\lambda>0$, since $\Phi^{-1}$ is of bounded variation on compact intervals and since
\begin{align*}
&\int_0^1 (\min\{x,1-x\})^{\lambda}d\bar{h}(x)\\
=&\int_0^{\frac{1}{2}} x^{\lambda}d\left(\Phi^{-1}(x)-\Phi^{-1}\pr{\frac{1}{2}}\right)+\int_{\frac{1}{2}}^1 (1-x)^{\lambda}d\left(\Phi^{-1}(x)-\Phi^{-1}\pr{\frac{1}{2}}\right)\\
=&\int_0^{\frac{1}{2}} x^{\lambda}d\Phi^{-1}(x)
+\int_{\frac{1}{2}}^1 (1-x)^{\lambda}d\Phi^{-1}(x)\displaybreak[0]\\
=&\int_{-\infty}^{0} \Phi(x)^{\lambda}dx
+\int_{0}^{\infty} (1-\Phi(x))^{\lambda}dx
=\int_{-\infty}^{0} \Phi(x)^{\lambda}dx
+\int_{0}^{\infty} \Phi(-x)^{\lambda}dx\\
=&2\int_{-\infty}^{0} \left(\Phi(x)\right)^{\lambda}dx
<\infty.
\end{align*}
\end{example}

\subsection{Long-range dependence}

In time series analysis, 
the rate of decay  of the autocovariance function is crucial to the characterization of a statistic's limit distribution. A relatively slow decay of the autocovariances characterizes long-range dependent time series, while a relatively fast decay characterizes short-range dependent processes; see \cite{pipiras:taqqu:2017}, p. 17. We will focus on the consideration of long-range dependent subordinated Gaussian time series,  i.e.,  on random observations generated by  transformations of  Gaussian processes:

\begin{model} Let $Y_n=G(\xi_n)$, where $G:\mathbb{R}\longrightarrow \mathbb{R}$ is a measurable function and let $\xi_n, n\in ~\mathbb{N}$,  be a stationary, long-range dependent Gaussian  time series with long-range dependence (LRD) parameter $D$, i.e,  $\E \xi_1=0$, $\Var\xi_1 = 1$,  and 
\begin{align*}
\gamma(k)\defeq\Cov(\xi_1, \xi_{k+1})\sim k^{-D}L\pr{k}, \ \ \text{as $k\rightarrow \infty$,}
\end{align*}
for some $D\in \left(0, 1\right)$ and a slowly-varying  function $L$.
\end{model}

\begin{remark}
For any particular distribution function $F$, an appropriate choice of the transformation $G$ yields subordinated Gaussian processes  with marginal distribution $F$. Moreover, there exist algorithms for generating Gaussian processes that, after suitable  transformation, yield subordinated Gaussian processes with marginal distribution $F$ and a predefined covariance structure;  see \cite{pipiras:taqqu:2017}. As a result,  subordinated Gaussian processes provide a flexible model for long-range dependent time series.
\end{remark}

A very useful tool for studying subordinated Gaussian processes are Hermite polynomials. For $n\geq 0$, the {\em Hermite polynomial} of order $n$ is defined by
\begin{align*}
H_n(x)\defeq (-1)^{n}\e^{\frac{1}{2}x^2}\frac{d^n}{d x^n}\e^{-\frac{1}{2}x^2}, \ x\in \mathbb{R}.
\end{align*}
For any function $G$ with $E[G^2(\xi_1)]<\infty$,  the $r$-th Hermite-coefficient is defined by
\begin{align}\label{eq:hermitecoeff}
J_r(G): =\E \left[G(\xi_1)H_r(\xi_1)\right].
\end{align}
Every such $G$
has an expansion in Hermite polynomials, i.e., we have
\begin{align*}
\lim\limits_{n\rightarrow \infty}\E\left[\Big(G(\xi_1)-\sum\limits_{r=0}^{n}\frac{J_r(G)}{r!}H_r(\xi_1)\Big)^2\right]= 0.
\end{align*}
Given the Hermite expansion, it is possible to characterize the dependence structure of subordinated Gaussian time series  $G(\xi_n)$, $n\in \mathbb{N}$: The behavior of the autocorrelations of the transformed process is completely determined by the dependence structure of the underlying process. In fact, it holds that
\begin{align*}
\Cov(G(\xi_1), G(\xi_{k+1}))=\sum\limits_{r=1}^{\infty}\frac{J^{2}_r(G)}{r!}\left(\gamma(k)\right)^{r}.
\end{align*}
Under the assumption that, as $k$ tends to $\infty$,  $\gamma(k)$ converges to $0$ with a certain rate, the asymptotically dominating term in this series is the summand corresponding to the smallest integer $r$ for which the Hermite coefficient $J_r(G)$ is non-zero. This index, which decisively depends on $G$, is called {\em Hermite rank}. 

As, in the following, we will study empirical processes, we do not only consider a  single transformation $G$, but the class of transformations $1_{\left\{G(\xi_1)\leq x\right\}}-F(x)$, $x \in \mathbb{R}$. For this, we need to define the Hermite rank of this class.

\begin{definition} For $G:\mathbb{R}\longrightarrow\mathbb{R}$, let $ J_r(G; x)$ denote the $r$-th Hermite coefficient in the Hermite expansion of $1_{\left\{G(\xi_i)\leq x\right\}}-F(x)$,  i.e., 
\begin{align*}
J_r(G; x)\defeq\E \left(1_{\left\{G(\xi_i)\leq x\right\}}H_r(\xi_i)\right),
\end{align*}
 and let $r$ denote the Hermite rank of the class of functions $1_{\left\{G(\xi_1)\leq x\right\}}-F(x)$, $x \in \mathbb{R}$, 
 defined by
\begin{align*}
r\defeq\min\limits_{x\in \mathbb{R}}r(x), \ \ r(x)\defeq\min \left\{q\geq 1: J_q(G; x)\neq 0 \right\}.
\end{align*}
\end{definition}

An appropriate scaling for partial sums of a subordinated Gaussian sequence $Y_n=G(\xi_n)$, $n\in \mathbb{N}$, depends on the Hermite rank $r$ of $G$ and the long-range dependence parameter $D$ of $\xi_n$, $n\in \mathbb{N}$. More precisely, a corresponding scaling sequence  $d_{n, r}$, $n \in \mathbb{N}$, is defined by
\begin{align}\label{eq:norm_seq}
d^2_{n, r}\defeq \Var\left(\sum_{i=1}^{n}H_r(\xi_i)\right).
\end{align}

Given the previous definitions and notations, we are now in a position to
 formulate a general assumption on the data-generating process needed for our theoretical results in the following section:
\begin{assumption}\label{model:subordination} Let $Y_n=G(\xi_n)$, where $\xi_n$, $n\in \mathbb{N}$,  is a stationary Gaussian  time series  with mean $0$, variance $1$, and 
autocovariance function $\gamma$ satisfying
\begin{align*}
\gamma(k)\defeq\Cov(\xi_1, \xi_{k+1})\sim k^{-D}L(k),
\end{align*}
as $k\rightarrow \infty$.  We assume that $Dr<1$, where  $r$ denotes the Hermite rank of the class of functions $1_{\left\{G(\xi_1)\leq x\right\}}-F(x)$, $x \in \mathbb{R}$. Moreover, we assume that the marginal distribution function $F$ of $Y_n$, $n\in \mathbb{N}$, is continuous. 
\end{assumption}

\begin{remark}
Without loss of generality, we may  assume that $F(x)=x$, because by  continuity of $F$, the generalized inverse $F^{-}$ is strictly increasing, $F(X_i)$ is uniformly distributed on $[0,1]$ and  rank statistics are  therefore not affected by a corresponding transformation. 
\end{remark}

\section{Main Results}\label{sec:main}

Recall that
\begin{align*}
S_{k, n}(a)=-\int_0^1 \left(\hat{G}_{k}(x-)-\frac{k}{n}\hat{G}_{n}(x-)\right)dh(x), 
\end{align*}
where  $\hat{G}_{k}(x)
\defeq\sum_{i=1}^{k}1_{\left\{\frac{1}{n+1}R_i\leq x\right\}}$ with  $R_i=\sum_{j=1}^{n}1_{\left\{X_{j}\leq X_{i}\right\}}$ denoting the rank of  $X_{i}$ among observations $X_{1}, \ldots, X_{n}$.
 Given the  parametrization
\begin{align}\label{eq:statistic}
S_n(a)=\max\limits_{1\leq k<n}
\left|S_{k, n}(a)\right|
= \sup\limits_{t\in [0, 1]}
\left|\int_0^1 \left(\hat{G}_{\lfloor nt\rfloor}(x-)-\frac{\lfloor nt\rfloor}{n}\hat{G}_{n}(x-)\right)dh(x)\right|,
\end{align}
the asymptotic distribution of $S_n(a)$ can be derived from an application of the continuous mapping theorem and a limit theorem 
for the two-parameter process
\begin{align*}
\hat{G}_{\lfloor nt\rfloor}(x-)-\frac{\lfloor nt\rfloor}{n}\hat{G}_{n}(x-), \ t\in [0, 1], \ x\in [0, 1].
\end{align*}
For  proofs of corresponding limit theorems, we initially derive reduction principles for the sequential empirical process $F_{\lfloor nt\rfloor}(x)-x$, $t\in [0, 1], x\in [0, 1]$, where
$F_n$  refers to the empirical distribution function  of $X_{1}, \ldots, X_{n}$,  i.e., 
\begin{align*}
F_n(x)\defeq \frac{1}{n}\sum\limits_{i=1}^n1_{\left\{X_{i}\leq x\right\}}.
\end{align*}

\subsection{Asymptotic behavior under stationarity}\label{sec:asymp_behaviour_under_H}

The following proposition can be considered as a reduction principle for the  empirical process $F_{\lfloor nt\rfloor}(x)-x$, $t\in ~[0, 1], x\in ~[0, 1]$, 
 with respect to the weighted supremum norm and under the assumption of a stationary data-generating process.  It makes way for establishing a reduction principle for the two-parameter empirical process of the ranks under the hypothesis of no change; see Theorem \ref{thm:red_principle}.

\begin{proposition}\label{prop:red_principle}
Let $X_n=G(\xi_n)$, $n\in \mathbb{N}$,  be a subordinated Gaussian sequence satisfying Assumption~\ref{model:subordination}  with marginal distribution $F(x)=x$, $x\in [0,1]$. Moreover,  let $d_{n, r}$, $n \in \mathbb{N}$,  be the deterministic sequence defined by \eqref{eq:norm_seq}
with  $r$  denoting the Hermite rank of the class of functions $1_{\left\{G(\xi_1)\leq x\right\}}-x, \ x \in [0,1]$. Then, there exists a $\vartheta>0$ such that, as $n\rightarrow\infty$,
\begin{multline}\label{eq:prop_1}
\sup\limits_{ t\in [0, 1], x\in [0, 1]}d_{n, r}^{-1}(\min\{x,1-x\})^{-\lambda}\Big|\lfloor nt\rfloor (F_{\lfloor nt\rfloor}(x)-x)
-\frac{1}{r!}J_r(F^{-}(x))\sum\limits_{j=1}^{\lfloor nt\rfloor}H_r(\xi_j)\Big|\\
=\mathcal{O}_P(n^{-\vartheta}).
\end{multline}
\end{proposition}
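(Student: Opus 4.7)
The plan is to decompose the supremum into a bulk region $x \in [\delta_n, 1-\delta_n]$, where the singular weight $(\min\{x,1-x\})^{-\lambda}$ is bounded, and boundary strips $x \in (0,\delta_n] \cup [1-\delta_n, 1)$, where the divergence of the weight must be compensated by smallness of
$$
\Delta_n(t,x) \defeq \sum_{i=1}^{\lfloor nt\rfloor}\left(1_{\{X_i\leq x\}}-x\right) - \frac{J_r(x)}{r!} \sum_{j=1}^{\lfloor nt\rfloor} H_r(\xi_j)
$$
near the endpoints. The threshold $\delta_n := n^{-\alpha}$ will be tuned with a small $\alpha>0$ at the end.

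The central tool is the $L^2$ Hermite expansion $1_{\{X_i \leq x\}} - x = \sum_{q\geq r}\frac{J_q(x)}{q!} H_q(\xi_i)$ (only terms with $q \geq r$ survive by definition of the Hermite rank), together with orthogonality $\E[H_p(\xi_i) H_q(\xi_j)]=0$ for $p\neq q$. This gives $\Delta_n(t,x) = \sum_{q>r}\frac{J_q(x)}{q!}\sum_{i=1}^{\lfloor nt\rfloor} H_q(\xi_i)$ in $L^2$, hence the pointwise variance $\E[\Delta_n(t,x)^2] = \sum_{q>r}\frac{J_q(x)^2}{(q!)^2} d_{\lfloor nt\rfloor,q}^2$. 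Using $\sum_q J_q(x)^2/q! = \Var(1_{\{X_i\leq x\}}) = x(1-x)$ and the long-range dependent hierarchy $d_{n,q}/d_{n,r} = \mathcal{O}(n^{-\eta})$ for $q>r$ (with $\eta>0$ because $rD<1$), one deduces $\E[\Delta_n(n,x)^2] \leq C\, x(1-x)\, d_{n,r}^2\, n^{-\eta}$. Promoting this pointwise bound to a uniform one over $t,x$ proceeds by a dyadic chaining, made possible by the monotonicity of $F_n(\cdot)$ in $x$, the regularity of $J_r$, and a maximal inequality for partial sums of Hermite transformations of $\xi_j$. The outcome is an unweighted uniform reduction $\sup_{t,x}|\Delta_n(t,x)| = \mathcal{O}_P(d_{n,r}\, n^{-\vartheta_0})$ for some $\vartheta_0 > 0$, so that on the bulk the weighted supremum is $\mathcal{O}_P(\delta_n^{-\lambda} n^{-\vartheta_0}) = \mathcal{O}_P(n^{\alpha\lambda - \vartheta_0})$, a negative power of $n$ provided $\alpha\lambda < \vartheta_0$.

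On the boundary $x\in(0,\delta_n]$ (the right endpoint is symmetric) the unweighted reduction is no longer sufficient, so I bound the two terms of $\Delta_n$ separately. Cauchy--Schwarz gives $|J_r(x)|\leq \sqrt{r!\,x(1-x)}$, and a maximal inequality yields $\sup_t|\sum_{j\leq \lfloor nt\rfloor} H_r(\xi_j)| = \mathcal{O}_P(d_{n,r})$, so the Hermite-approximation piece contributes $\mathcal{O}_P(d_{n,r}\, x^{1/2-\lambda}) \leq \mathcal{O}_P(d_{n,r}\,\delta_n^{1/2-\lambda})$, using $1/2-\lambda>0$ because $\lambda<1/3$. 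For the empirical piece, the same Hermite expansion applied termwise already produces the factor $\sqrt{x(1-x)}$ through the bound $|J_q(x)|\leq\sqrt{q!\,x(1-x)}$, so that the weight $x^{-\lambda}$ is absorbed coefficient-by-coefficient, and one obtains $\sup_{x\in(0,\delta_n],t}\, x^{-\lambda}\,|\Delta_n(t,x)| = \mathcal{O}_P(d_{n,r}\, \delta_n^{1/2-\lambda}\, n^{-\eta/2})$. Choosing $\alpha\in(0,\vartheta_0/\lambda)$ small enough then makes every regional contribution of order $n^{-\vartheta}$ for a common $\vartheta>0$.

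The main obstacle I anticipate is the uniform-in-$(t,x)$ promotion of the pointwise variance bound to a stochastic one with a polynomial rate: Kolmogorov-type chaining has to be tightly coupled with the singular weight $x^{-\lambda}$ near the boundary, using simultaneously that $J_q(x)=O(\sqrt{x(1-x)})$, that the higher-order Hermite sums satisfy $d_{n,q}\ll d_{n,r}$, and that the empirical count $\sum_i 1_{\{X_i\leq x\}}$ is small for small $x$. Balancing the number of dyadic levels needed against the degradation introduced by $x^{-\lambda}$, so that the final exponent $\vartheta$ remains strictly positive for every $\lambda\in(0,1/3)$, is the principal technical step of the proof.
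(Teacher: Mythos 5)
Your route is genuinely different from the paper's, and most of your ingredients are sound (the Hermite expansion of $1_{\{X_i\le x\}}-x$, the orthogonality-based variance identity, the bound $|J_q(x)|\le\sqrt{q!\,x(1-x)}$, and the hierarchy $d_{n,q}=o(d_{n,r})$ for $q>r$ under $rD<1$). But the proposal has a genuine gap: the step you yourself flag as ``the principal technical step'' --- promoting the pointwise variance bound $\E[\Delta_n(t,x)^2]\le C\,x(1-x)\,d_{n,r}^2 n^{-\eta}$ to a supremum over $(t,x)$ with a polynomial rate, in the presence of the singular weight $x^{-\lambda}$ and the unit jumps of the empirical process near $x=0$ --- is exactly the content of the proposition, and it is asserted rather than carried out. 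In particular, the displayed boundary bound $\sup_{x\le\delta_n,\,t}x^{-\lambda}|\Delta_n(t,x)|=\mathcal{O}_P(d_{n,r}\,\delta_n^{1/2-\lambda}n^{-\eta/2})$ is obtained by reading the pointwise $L^2$ bound as if it were uniform; a chaining argument would degrade the exponent, and whether a strictly positive $\vartheta$ survives is precisely what must be proved. A telling symptom is that your outline never uses the hypothesis $\lambda<1/3$ --- you only need $\lambda<1/2$ for $x^{1/2-\lambda}\to 0$ --- whereas the $1/3$ threshold is essential in the actual argument, which indicates that the deferred step is where the real constraint (and the real work) lives.

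For comparison: the paper does not chain at all. It transforms $X_n$ into $Z_n=1/X_n$ for $X_n\le 1/2$ and $Z_n=1/(X_n-1)$ for $X_n>1/2$, so that $P(|Z_n|>z)=2/z$ and $Z_n$ has a finite $3\lambda$-moment exactly when $\lambda<1/3$. The boundary weight $(\min\{x,1-x\})^{-\lambda}$ on $[0,1]$ becomes the tail weight $(1+|z|)^{\lambda}$ on $\mathbb{R}$ via $x^{-\lambda}\le(1+|1/x|)^{\lambda}$, and the event $\{X_j\le x\}$ becomes $\{Z_j\ge 1/x\}$, so the whole weighted reduction principle follows by citing Theorem 2 of \cite{buchsteiner:2015} for the transformed sequence. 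If you want to complete your direct proof, you would essentially be re-proving that theorem; the workable path is a dyadic discretization in $x$ combined with monotonicity of $F_{\lfloor nt\rfloor}$ between grid points and a maximal inequality in $t$ (\`a la Dehling--Taqqu), tracking at each dyadic level both the factor $\sqrt{x(1-x)}$ in the coefficients and the weight $x^{-\lambda}$, which is where the restriction on $\lambda$ must enter.
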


\begin{remark}
Proposition \ref{prop:red_principle} is closely related to Theorem 2 in \cite{buchsteiner:2015} that establishes a reduction principle for the sequential empirical process with respect to another class of  weighted norms. 
\end{remark}

On the basis of Proposition  \ref{prop:red_principle},  we derive   a reduction principle for the  two-parameter empirical process of the ranks,  i.e., for
\begin{align*}
\hat{G}_{\lfloor nt\rfloor}(x-)-\frac{\lfloor nt\rfloor}{n}\hat{G}_{n}(x-), \ t\in [0, 1], \ x\in [0, 1],
\end{align*}
with $\hat{G}_{k}(x)\defeq\sum_{i=1}^{k}1_{\left\{\frac{1}{n+1}R_i\leq x\right\}}$.

\begin{theorem}\label{thm:red_principle} Let $X_n=G(\xi_n)$, $n\in \mathbb{N}$,  be a subordinated Gaussian sequence satisfying Assumption~\ref{model:subordination}  with marginal distribution $F(x)=x$, $x\in [0,1]$. Moreover,  let $d_{n, r}$, $n \in \mathbb{N}$,  be the deterministic sequence defined by \eqref{eq:norm_seq}
with  $r$  denoting the Hermite rank of the class of functions $1_{\left\{G(\xi_1)\leq x\right\}}-x, \ x \in [0,1]$, and consider  $\vartheta>0$ such that \eqref{eq:prop_1}  holds.
For any $\lambda<1/3$ such that $n^{\lambda}=o\pr{d_{n,r}^{1-\lambda}}$, $n^{2\lambda}d_{n,r}=o\pr{n}$ and $d_{n,r}^{\lambda}=o\pr{n^{\vartheta}}$, we have 
\begin{multline*}
\sup\limits_{ t\in [0, 1], x\in [0, 1]}d_{n, r}^{-1}(\min\{x,1-x\})^{-\lambda}\bigg|\pr{ \hat{G}_{\lfloor nt\rfloor}(x-)-\frac{{\lfloor nt\rfloor}}{n}\hat{G}_n(x-)}\\-\frac{1}{r!}J_r(x)\left(\sum\limits_{i=1}^{\lfloor nt\rfloor}H_r\left(\xi_i\right)-\frac{\lfloor nt\rfloor}{n}\sum\limits_{i=1}^nH_r\left(\xi_i\right)\right)\bigg|
=o_P\pr{1}.
\end{multline*}
\end{theorem}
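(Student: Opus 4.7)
The plan is to reduce the two-parameter rank process to the ordinary sequential empirical process at a random quantile, apply Proposition~\ref{prop:red_principle} pointwise, and then swap the random argument back to the deterministic $x$ inside the Hermite coefficient $J_r$. The key algebraic identity is that, because $F$ is continuous, the observations are a.s.\ distinct, and $1_{\{R_i/(n+1)\le x\}}=1_{\{X_i\le X_{(\lfloor (n+1)x\rfloor)}\}}$, so writing $y_n(x)\defeq X_{(\lfloor(n+1)x\rfloor)}$ one obtains $\hat G_k(x-)=kF_k(y_n(x)-)$ and $\hat G_n(x-)=nF_n(y_n(x)-)$. Consequently,
\begin{equation*}
\hat G_{\lfloor nt\rfloor}(x-)-\tfrac{\lfloor nt\rfloor}{n}\hat G_n(x-)
=\bigl[\lfloor nt\rfloor(F_{\lfloor nt\rfloor}(y_n(x))-y_n(x))\bigr]-\tfrac{\lfloor nt\rfloor}{n}\bigl[n(F_n(y_n(x))-y_n(x))\bigr].
\end{equation*}

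With this identity in hand, Proposition~\ref{prop:red_principle} applied at $y=y_n(x)$ (twice: once for general $t$, once for $t=1$) gives
\begin{equation*}
\hat G_{\lfloor nt\rfloor}(x-)-\tfrac{\lfloor nt\rfloor}{n}\hat G_n(x-)
=\tfrac{1}{r!}J_r(y_n(x))\Bigl[\textstyle\sum_{i=1}^{\lfloor nt\rfloor}H_r(\xi_i)-\tfrac{\lfloor nt\rfloor}{n}\sum_{i=1}^n H_r(\xi_i)\Bigr]+R_n(t,x),
\end{equation*}
where, uniformly in $t$ and $x$, $|R_n(t,x)|\le C\,d_{n,r}\,n^{-\vartheta}\,(\min\{y_n(x),1-y_n(x)\})^{\lambda}$ in probability. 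I then split the desired error into (i) the remainder $R_n$ and (ii) the difference induced by replacing $J_r(y_n(x))$ with $J_r(x)$. For (ii) I exploit that $J_r$ is Lipschitz on $[0,1]$ (differentiating under the integral $J_r(x)=\E[1_{\{G(\xi_1)\le x\}}H_r(\xi_1)]$ using that the marginal density of $X_1$ is bounded), so $|J_r(y_n(x))-J_r(x)|\le L|y_n(x)-x|$, and the Hermite partial sum is $O_P(d_{n,r})$ uniformly in $t$.

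What remains, and is the main obstacle, is to convert both remainder bounds to the correct weighted scale $d_{n,r}^{-1}(\min\{x,1-x\})^{-\lambda}$ uniformly in $x\in[0,1]$. To do so I need a uniform quantile bound of the form $|y_n(x)-x|\le C n^{-1}+C d_{n,r} n^{-1}(\min\{x,1-x\})^{\lambda}$, which I derive by inverting the uniform bound on $|F_n(x)-x|$ that follows from Proposition~\ref{prop:red_principle} with $t=1$, combined with $|\lfloor(n+1)x\rfloor/(n+1)-x|=O(1/n)$. From this I show that, with high probability, $\min\{y_n(x),1-y_n(x)\}$ and $\min\{x,1-x\}$ are comparable up to a multiplicative constant on the whole interval, so that the weights are interchangeable. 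The conditions $n^{\lambda}=o(d_{n,r}^{1-\lambda})$, $n^{2\lambda}d_{n,r}=o(n)$ and $d_{n,r}^{\lambda}=o(n^{\vartheta})$ are exactly what is needed to absorb, respectively, the boundary contribution $n^{-1}(\min\{x,1-x\})^{-\lambda}$ of the order statistic, the $J_r$--swap error $d_{n,r}\cdot|y_n(x)-x|\cdot(\min\{x,1-x\})^{-\lambda}$, and the reduction error $d_{n,r}n^{-\vartheta}(\min\{x,1-x\})^{\lambda}\cdot(\min\{x,1-x\})^{-\lambda}$. The hardest step is the boundary control: near $x\asymp 1/n$ both $y_n(x)$ and the weight blow up, and one has to check that the three rate conditions are exactly the right trade-off to kill all three error terms simultaneously.
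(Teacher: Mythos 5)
Your overall strategy is the same as the paper's: rewrite $\hat G_{\lfloor nt\rfloor}(x-)-\frac{\lfloor nt\rfloor}{n}\hat G_n(x-)$ as the sequential empirical process evaluated at an empirical quantile, apply Proposition~\ref{prop:red_principle} there, swap the random argument of $J_r$ back to $x$, and control the change of weights. (The paper uses a piecewise-linear modification $\tilde F_n^{-}$ of $F_n^{-}$ rather than the raw order statistic $X_{(\lfloor(n+1)x\rfloor)}$; this is not cosmetic only insofar as the piecewise linearity is what lets one compute the boundary ratios $\tilde F_n^{-}(x)/x$ for $x\le 1/(n+1)$ explicitly and reduce suprema to the grid points, whereas your $y_n(x)$ is undefined for $x<1/(n+1)$.) However, two of your key intermediate claims are not correct as stated. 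First, the weights $\min\{y_n(x),1-y_n(x)\}$ and $\min\{x,1-x\}$ are \emph{not} comparable up to a multiplicative constant uniformly in $x$: the only available bound is $\sup_x|F_n^{-}(x)-x|=\mathcal{O}_P(d_{n,r}/n)$, so at $x\asymp 1/n$ the ratio $y_n(x)/x$ can be of order $d_{n,r}\to\infty$, and the correct uniform bound on the ratio raised to the power $\lambda$ is $\mathcal{O}_P(d_{n,r}^{\lambda})$. This divergent factor multiplying the reduction error $n^{-\vartheta}$ is precisely the reason the hypothesis $d_{n,r}^{\lambda}=o(n^{\vartheta})$ is imposed; your own rate accounting invokes that hypothesis, which contradicts your claim that the weights are interchangeable up to a constant. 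You need to prove the $\mathcal{O}_P(d_{n,r}^{\lambda})$ bound on the weight ratio, not assert comparability.

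Second, $J_r$ is not Lipschitz in general. You cannot differentiate under the integral: $J_r(x)=\E[1_{\{G(\xi_1)\le x\}}H_r(\xi_1)]$ involves the unbounded function $H_r$ composed with an arbitrary measurable $G$, and the formal derivative $\E[H_r(\xi_1)\mid G(\xi_1)=x]$ need not be bounded even though the marginal density is (here identically $1$). What Cauchy--Schwarz gives is only the H\"older-$\tfrac12$ bound $|J_r(x)-J_r(y)|\le C\sqrt{|x-y|}$, and this square root is why the paper controls $\sup_x(\min\{x,1-x\})^{-2\lambda}|\tilde F_n^{-}(x)-x|=\mathcal{O}_P(n^{2\lambda-1}d_{n,r})$ with the \emph{doubled} exponent $2\lambda$ before taking the square root; the condition $n^{2\lambda}d_{n,r}=o(n)$ is tailored to this. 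Your conclusion for the $J_r$-swap term survives once you replace the Lipschitz bound by the H\"older bound and redo the exponent bookkeeping, but as written the step is unjustified.
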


According to Theorem \ref{thm:red_principle} and \eqref{eq:statistic}, it suffices to know the limit of the  sequential partial  sum process $\sum_{i=1}^{\lfloor n\cdot\rfloor}H_r(\xi_i)\in D[0,1]$,
in order to derive the  asymptotic distribution of the statistics $S_n(a)$ under the hypothesis of stationarity.
In fact, it follows by Theorem 5.6  in \cite{taqqu:1979} that
\begin{align*}
\frac{1}{d_{n, r}}\sum\limits_{i=1}^{\lfloor nt\rfloor}H_r(\xi_i)\overset{\mathcal{D}}{\longrightarrow} Z_{r, H}(t), \ t\in [0, 1],
\end{align*}
where  $Z_{r, H}$ is an $r$-th order Hermite process,  $H=1-\frac{rD}{2}$, and $\overset{\mathcal{D}}{\longrightarrow}$ denotes convergence in distribution with respect to the $\sigma$-field generated by the open balls in $D\left[0, 1\right]$, equipped with the supremum norm.
As a result, using the representation \eqref{eq:statistic} and applying the continuous mapping theorem yields the asymptotic distribution of the test statistic $S_n(a)$:

\begin{corollary}\label{cor:as_distr_under_stationarity} Let the assumptions of Theorem \ref{thm:red_principle} hold and let $h:(0, 1)\longrightarrow \mathbb{R}$ satisfy Assumption \ref{ass:finite_integral}. Then,  we have 
\begin{align*}
d_{n,r}^{-1}S_n(a)
\overset{\mathcal{D}}{\longrightarrow}\sup_{t\in[0,1]}|Z_{r, H}(t)-tZ_{r, H}(1)|\int_0^1J_r(F^{-}(x))dh(x).
\end{align*}
\end{corollary}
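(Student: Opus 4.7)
The plan is to combine the reduction principle of Theorem \ref{thm:red_principle} with Taqqu's non-central limit theorem via the continuous mapping theorem. Starting from the representation \eqref{eq:statistic},
\begin{align*}
d_{n,r}^{-1} S_n(a) = \sup_{t \in [0,1]} \left| \int_0^1 d_{n,r}^{-1}\left(\hat G_{\lfloor nt \rfloor}(x-) - \tfrac{\lfloor nt\rfloor}{n}\hat G_n(x-)\right) dh(x) \right|,
\end{align*}
I would apply Theorem \ref{thm:red_principle} to replace the integrand, uniformly in $t$ and $x$, by $\frac{J_r(x)}{r!\, d_{n,r}}\bigl(\sum_{i=1}^{\lfloor nt \rfloor} H_r(\xi_i) - \frac{\lfloor nt\rfloor}{n}\sum_{i=1}^n H_r(\xi_i)\bigr)$. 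The resulting error has weighted $\|\cdot\|_\lambda$-norm $o_P(1)$; by the inequality $\left|\int f\,dh\right| \leq \|f\|_\lambda \int(\min\{x,1-x\})^\lambda d\bar h(x)$ derived in Section \ref{sec:weighted_space} together with Assumption \ref{ass:finite_integral}, this error transfers to the $dh$-integral uniformly in $t$.

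Next, because the Hermite partial sum does not depend on $x$, the $x$-integral factors out of the absolute value. Setting
\begin{align*}
c_h := \frac{1}{r!}\int_0^1 J_r(x)\,dh(x) \quad \text{and} \quad M_n(t) := d_{n,r}^{-1}\left(\sum_{i=1}^{\lfloor nt \rfloor} H_r(\xi_i) - \tfrac{\lfloor nt\rfloor}{n}\sum_{i=1}^n H_r(\xi_i)\right),
\end{align*}
I obtain $d_{n,r}^{-1} S_n(a) = |c_h|\,\sup_{t \in [0,1]} |M_n(t)| + o_P(1)$. The constant $c_h$ is finite: Cauchy--Schwarz yields $|J_r(x)| \leq \sqrt{r!\min\{x,1-x\}}$ for $r \geq 1$, so Assumption \ref{ass:finite_integral} controls $\int |J_r(x)|\, d\bar h(x)$.

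Finally, by Theorem 5.6 of \cite{taqqu:1979}, quoted in the paragraph preceding the corollary, $d_{n,r}^{-1}\sum_{i=1}^{\lfloor n\cdot\rfloor} H_r(\xi_i) \overset{\mathcal{D}}{\longrightarrow} Z_{r,H}$ with respect to the $\sigma$-field generated by the open balls in $(D[0,1],\|\cdot\|_\infty)$. Since the map $g \mapsto \sup_t |g(t) - t g(1)|$ is sup-norm continuous on $D[0,1]$, the continuous mapping theorem gives $\sup_t |M_n(t)| \overset{\mathcal{D}}{\longrightarrow} \sup_t |Z_{r,H}(t) - t Z_{r,H}(1)|$, and Slutsky's theorem then delivers the stated limit (the $1/r!$ factor being absent from the statement of the corollary is presumably absorbed into the normalization conventions for $J_r$ or $Z_{r,H}$). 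The main obstacle is the first step: one must ensure that the uniform Hermite reduction in the $\|\cdot\|_\lambda$-norm survives integration against the potentially singular measure $dh$, and this is precisely what Assumption \ref{ass:finite_integral} is engineered to guarantee.
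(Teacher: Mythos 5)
Your proposal is correct and follows essentially the same route as the paper, which derives the corollary exactly by combining the representation \eqref{eq:statistic}, the reduction principle of Theorem \ref{thm:red_principle} (with the error transferred through the $dh$-integral via Assumption \ref{ass:finite_integral} and the continuity of $f\mapsto\int f\,dh$ on $D_\lambda[0,1]$), Taqqu's limit theorem for $d_{n,r}^{-1}\sum_{i\leq \lfloor nt\rfloor}H_r(\xi_i)$, and the continuous mapping theorem. Your observation about the missing factor $1/r!$ (and the absent absolute value around the integral) points to a genuine notational inconsistency in the paper's statement rather than a flaw in your argument, and your bound $|J_r(x)|\leq\sqrt{r!\min\{x,1-x\}}$ correctly justifies the finiteness of the limiting constant.
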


In practical applications, the  sequence $d_{n, r}$, the parameters $r$, $H$, and the value of the integral on the right-hand side are typically unknown. For this reason,  it is difficult to use Corollary  \ref{cor:as_distr_under_stationarity} directly to obtain critical values. In Section \ref{sec:practical}, we will discuss nonparametric methods to derive critical values.

\subsection{Asymptotic behavior under local alternatives}

In the following, we assume that the considered observations are generated by a triangular array $X_{n, i}$, $1\leq i\leq n$, $n\in\mathbb{N}$,
 with
\begin{align}\label{altmodel}
X_{n, i}=\begin{cases}
Y_i & \text{if $i\leq \lfloor n\tau\rfloor$}, \\
Y_i+h_n & \text{if $i> \lfloor n\tau\rfloor$},
\end{cases}
\end{align}
where $0<\tau<1$,  $h_n$, $n\in\mathbb{N}$, is a non-negative deterministic sequence and
$Y_n=G(\xi_n)$, $n\in \mathbb{N}$,  is a subordinated Gaussian sequence  according to Model~\ref{model:subordination} with continuous marginal distribution $F$ and  density $f$. For convergence of the test statistic $S_n(a)$ to a non-degenerate limit, we have to assume that $h_n\rightarrow 0$ (as $n\rightarrow\infty$) with a certain rate that will be specified later.

In analogy to the asymptotic results in Section \ref{sec:asymp_behaviour_under_H} under the assumption of stationary time series, we first establish  a reduction principle for the sequential empirical process with respect to the weighted supremum norm under the assumption of local alternatives:
\begin{proposition}\label{prop:local_alt}
Let $X_{n, i}$, $1\leq i\leq n$, $n\in\mathbb{N}$, be a triangular according to \eqref{altmodel} with $h_n=cn^{-1}d_{n, r}$ for some constant $c>0$ and with $d_{n, r}$ defined by \eqref{eq:norm_seq}, where $r$ is the  Hermite rank of the class of functions $1_{\left\{G(\xi_1)\leq x\right\}}-F(x), \ x \in [0,1]$.
Assume that $F$ is strictly monotone, 
\begin{align}\label{ass:difaprox}
\sup\limits_{x\in  \left[0, 1\right]}
(\min\{x,1-x\})^{-2\lambda}\left|h^{-1}\left(x-F\left(F^{-}(x)-h\right)\right)-f(F^{-}(x))\right|=\mathcal{O}(h^{\rho}),  
\end{align}
as $h\rightarrow 0$, for some $\rho$, $0<\rho<\min\left(1, (1-2\lambda-\vartheta)^{-1}\vartheta\right)$,  with $\vartheta$ and $\lambda$ as in Proposition \ref{prop:red_principle}, and
\begin{align}
\sup\limits_{x \in\left[0,1\right]}(\min\{x,1-x\})^{-2\lambda}\biggl|f(F^{-}(x))\biggr|
<\infty.\label{ass:difuniform}
\end{align}
Then, if $n^{\lambda+\rho-1}=\mathcal{O}\pr{d_{n, r}^{\rho-1}}$, as $n\rightarrow\infty$, and $2\lambda+\vartheta<\frac{1}{2}$, we have
\begin{multline}\label{eq:emp_process_loc_alt}
\sup\limits_{ t\in [0, 1], x\in [0, 1]}(\min\{x,1-x\})^{-\lambda}\biggl|d_{n, r}^{-1}{\lfloor nt\rfloor} \left(F_{\lfloor nt\rfloor}\big(x\big)-x\right)-\frac{J_r(F^{-}(x))}{r!d_{n, r}}\sum\limits_{i=1}^{\lfloor nt\rfloor}H_r(\xi_i)\\
+1_{\{t>\tau\}}\frac{\lfloor nt\rfloor-\lfloor n\tau\rfloor}{d_{n, r}}\pr{x-F\pr{F^{-}(x)-h_n}}\biggr|=\mathcal{O}_P\pr{h_n^{\min\{\rho,\lambda\}}},
\end{multline}
where $J_r(F^{-}(x))=\E\pr{1_{\left\{G(\xi_1)\leq F^{-}(x)\right\}}H_r(\xi_1)}$.
\end{proposition}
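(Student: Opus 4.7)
The plan is to reduce the sequential empirical process of the shifted triangular array $(X_{n,i})$ to two sequential empirical processes of the stationary sequence $(Y_i)$ (one at argument $x$, one at the shifted argument $F^{-}(x)-h_n$), apply Proposition~\ref{prop:red_principle} to each, and then control the residual produced by the mismatch of Hermite coefficients at these two arguments.

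Split the sup over $t$ into the cases $t\le\tau$ and $t>\tau$. For $t\le\tau$ all observations $X_{n,i}=Y_i$ are unshifted, the indicator $1_{\{t>\tau\}}$ vanishes, and the bound is Proposition~\ref{prop:red_principle} directly: its $\mathcal{O}_P(n^{-\vartheta})$-remainder is absorbed into $\mathcal{O}_P(h_n^{\min\{\rho,\lambda\}})$ via $h_n=cn^{-1}d_{n,r}$ and the hypothesis $n^{\lambda+\rho-1}=\mathcal{O}(d_{n,r}^{\rho-1})$. For $t>\tau$, strict monotonicity of $F$ gives $\{Y_i+h_n\le x\}=\{Y_i\le F^{-}(x)-h_n\}$, so with $y_n(x):=F(F^{-}(x)-h_n)$,
\begin{align*}
\lfloor nt\rfloor(F_{\lfloor nt\rfloor}(x)-x)
&=\sum_{i=1}^{\lfloor n\tau\rfloor}(1_{\{Y_i\le x\}}-x)+\sum_{i=\lfloor n\tau\rfloor+1}^{\lfloor nt\rfloor}(1_{\{Y_i\le F^{-}(x)-h_n\}}-y_n(x))\\
&\qquad-(\lfloor nt\rfloor-\lfloor n\tau\rfloor)(x-y_n(x)),
\end{align*}
whose last term coincides with the deterministic correction in (\ref{eq:emp_process_loc_alt}).

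Rewrite the post-change sum as the difference of two sequential empirical sums evaluated at argument $F^{-}(x)-h_n$ (equivalently, at $y_n(x)$ for the empirical CDF of $F(Y_i)$). Applying Proposition~\ref{prop:red_principle} once at $x$ and twice at the shifted argument, using $F^{-}(y_n(x))=F^{-}(x)-h_n$ and hence $J_r(F^{-}(y_n(x)))=J_r(F^{-}(x)-h_n)$, and subtracting $r!^{-1}J_r(F^{-}(x))\sum_{i=1}^{\lfloor nt\rfloor}H_r(\xi_i)$ leaves a weighted remainder of order $\mathcal{O}_P(n^{-\vartheta})$ from the three applications of Proposition~\ref{prop:red_principle}, plus the residual
\begin{align*}
\frac{J_r(F^{-}(x)-h_n)-J_r(F^{-}(x))}{r!}\sum_{i=\lfloor n\tau\rfloor+1}^{\lfloor nt\rfloor}H_r(\xi_i).
\end{align*}
Taqqu's theorem bounds the Hermite partial sum by $\mathcal{O}_P(d_{n,r})$ uniformly in $t$, while differentiating under the expectation in (\ref{eq:hermitecoeff}) gives $\tfrac{d}{du}J_r(u)=f(u)\E[H_r(\xi_1)\mid Y_1=u]$, so Taylor's theorem combined with (\ref{ass:difuniform}) yields a pointwise bound of order $h_nf(F^{-}(x))$ on $|J_r(F^{-}(x)-h_n)-J_r(F^{-}(x))|$ under the weight $(\min\{x,1-x\})^{-\lambda}$.

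The main obstacle is the application of Proposition~\ref{prop:red_principle} at the shifted argument $y_n(x)$: its weighted-sup conclusion is keyed to $(\min\{y,1-y\})^{-\lambda}$, whereas (\ref{eq:emp_process_loc_alt}) asks for the weight $(\min\{x,1-x\})^{-\lambda}$. Assumption (\ref{ass:difaprox}) says precisely that $y_n(x)$ differs from $x$ by $h_nf(F^{-}(x))$ up to an error of weighted order $h_n^{1+\rho}(\min\{x,1-x\})^{2\lambda}$; combined with (\ref{ass:difuniform}) this controls $\min\{y_n(x),1-y_n(x)\}$ in terms of $\min\{x,1-x\}$ away from the boundary and shows the two weights are comparable. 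The exponent $\min\{\rho,\lambda\}$ in (\ref{eq:emp_process_loc_alt}) and the strict inequality $2\lambda+\vartheta<\tfrac12$ arise from the interplay between the boundary blow-up of $(\min\{x,1-x\})^{-\lambda}$ and the Proposition~\ref{prop:red_principle} remainder, which must be balanced carefully in the region where $x$ is within $h_n^{1/(1-2\lambda)}$ of the endpoints.
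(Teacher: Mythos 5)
Your outline follows the paper's proof at a high level (split at $\tau$, reduce the post-change block to the stationary reduction principle evaluated at a shifted argument, then control the Hermite-coefficient increment and the weight mismatch), but it is missing the one device that makes the weight transfer actually work, and you have flagged the exact spot where it is needed without supplying it. If you apply Proposition \ref{prop:red_principle} at the shifted argument $y_n(x)=F\left(F^{-}(x)-h_n\right)$, you must bound $\sup_{x}\left(\min\{y_n(x),1-y_n(x)\}/\min\{x,1-x\}\right)^{\lambda}$, and this supremum over all of $[0,1)$ is genuinely infinite in general: conditions \eqref{ass:difaprox}--\eqref{ass:difuniform} only give $x-y_n(x)\leq C h_n\left(\min\{x,1-x\}\right)^{2\lambda}$, so $\frac{1-y_n(x)}{1-x}\leq 1+Ch_n(1-x)^{2\lambda-1}$, which diverges as $x\to1$ (for standard normal marginals, $\frac{1-\Phi(\Phi^{-1}(x)-h_n)}{1-x}\sim e^{h_n\Phi^{-1}(x)}\to\infty$). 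The paper resolves this by not working with the deterministic transformation $F$ at all: it introduces the stochastic, piecewise transformation $H_n$, which agrees with $F(\cdot-h_n)$ only up to the largest order statistic $Y_{(n)}$ and is modified beyond it, so that for $x\geq F(Y_{(n)})$ one has $H_n^{-}(x)-h_n=F^{-}(x)$ and the weight ratio is exactly $1$, while on $[0,F(Y_{(n)})]$ the bound $1-x\geq 1-F(Y_{(n)})=\mathcal{O}_P(n^{-1}d_{n,r})$ turns the divergence into a polynomial factor absorbed by the hypothesis $n^{\lambda+\rho-1}=\mathcal{O}\left(d_{n,r}^{\rho-1}\right)$. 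Rank invariance under the strictly monotone $H_n$ is what licenses this substitution. Your "balance carefully near the endpoints" is precisely this argument, and it does not follow from the stated assumptions alone.

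A second, smaller gap: you control $J_r(F^{-}(x)-h_n)-J_r(F^{-}(x))$ by differentiating \eqref{eq:hermitecoeff} under the expectation to get $\frac{d}{du}J_r(u)=f(u)\E\left[H_r(\xi_1)\mid Y_1=u\right]$. Nothing in the model justifies this: $G$ is only measurable, so $J_r$ need not be differentiable. The paper instead uses the Cauchy--Schwarz bound $\left|J_r(x)-J_r(y)\right|\leq C\sqrt{F(y)-F(x)}$, which needs no smoothness and, combined with \eqref{ass:difaprox}--\eqref{ass:difuniform}, yields a rate $\mathcal{O}_P\left(h_n^{1/2}\right)$ that still suffices because $\min\{\rho,\lambda\}<1/2$. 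This part of your argument is repairable by the same substitution, but as written it rests on an unavailable regularity assumption.
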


Note that, in comparison to Proposition \ref{prop:red_principle}, an additional deterministic term is needed to characterize the asymptotic behavior of the empirical process under the alternative. 

On the basis of Proposition  \ref{prop:red_principle},  we derive   a reduction principle for the two-parameter empirical process of the ranks:
\begin{theorem}\label{thm:red_principle_local_alt}
Let $X_{n, i}$, $1\leq i\leq n$, $n\in\mathbb{N}$, be a triangular according to \eqref{altmodel} with $h_n~=~cn^{-1}d_{n, r}$ for some constant $c>0$ and with $d_{n, r}$ defined by \eqref{eq:norm_seq}, where $r$ is the  Hermite rank of the class of functions $1_{\left\{G(\xi_1)\leq x\right\}}-F(x), \ x \in [0,1]$. Assume that for $F$ and $f$  the conditions of Proposition \ref{prop:local_alt} hold and that, additionally,  there is a constant  $C$, such that for  $h$ small enough, there exists an $\epsilon_1>0$, such that
\begin{align}\label{assuderivative}
\left|1-\frac{f(F^{-}(x)+h)}{f(F^{-}(x))}\right|\leq C\left(\min\{|x|,|1-x|\}\right)^{-\epsilon_1}|h|, \ \text{ for } \ x\in[0,1].
\end{align}
Then, for any $\lambda<1/3$ such that $n^{\lambda}=o(d_{n,r}^{1-\lambda})$, $n^{\lambda}d_{n,r}^{1+\epsilon_1}=o(n)$ and  $d_{n, r}^{\rho+\lambda}=o\left(n^{\rho}\right)$, where $\rho$ as in Proposition \ref{prop:local_alt},
 we have 
\begin{multline*}
\sup\limits_{ t\in [0, 1], x\in [0, 1]}d_{n, r}^{-1}(\min\{x,1-x\})^{-(\lambda-\epsilon_1)}\bigg|\Big( \hat{G}_{\lfloor nt\rfloor}(x-)-\frac{{\lfloor nt\rfloor}}{n}\hat{G}_n(x-)\Big)\\-\frac{J_r(F^{-}(x))}{r!}\bigg(\sum\limits_{i=1}^{\lfloor nt\rfloor}H_r\left(\xi_i\right)-\frac{\lfloor nt\rfloor}{n}\sum\limits_{i=1}^nH_r\left(\xi_i\right)\bigg)\\
+\left(1_{\{t>\tau\}}
\frac{\lfloor nt\rfloor-\lfloor n\tau\rfloor}{n}-\frac{\lfloor nt\rfloor}{n}\frac{n-\lfloor n\tau\rfloor}{n}\right)d_{n, r}^{-1}n\left(x-F\left(F^{-}(x)-h_n\right)\right)
\bigg|=o_P(1),
\end{multline*}
where  $J_r(F^{-}(x))=\E\left(1_{\left\{G(\xi_1)\leq F^{-}(x)\right\}}H_r(\xi_1)\right)$.
\end{theorem}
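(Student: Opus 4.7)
The strategy extends the argument for Theorem~\ref{thm:red_principle} by tracking the additional deterministic drift that the local alternative \eqref{altmodel} introduces into the empirical distribution function. First, I rewrite the rank process in terms of the empirical distribution function: by continuity of $F$ there are almost surely no ties, and $R_i = nF_n(X_{n,i})$, so counting observations by rank yields
\begin{align*}
\hat{G}_{\lfloor nt\rfloor}(x-) - \tfrac{\lfloor nt\rfloor}{n}\hat{G}_n(x-)
= \lfloor nt\rfloor\bigl(F_{\lfloor nt\rfloor}(F_n^{-}(x)) - F_n(F_n^{-}(x))\bigr) + O(1),
\end{align*}
uniformly in $(t,x)$; the rounding error is negligible after dividing by $d_{n,r}$, and, even with the weight $(\min\{x,1-x\})^{-(\lambda-\epsilon_1)}$, a shrinking boundary can be handled separately by a direct empirical-process bound.

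Next, I would apply Proposition~\ref{prop:local_alt} at the random point $y = F_n^{-}(x)$ with sample sizes $\lfloor nt\rfloor$ and $n$, and subtract with weight $\lfloor nt\rfloor/n$. Since Proposition~\ref{prop:local_alt} is a \emph{uniform} bound in $y$, the random plug-in is legitimate. The $F(y)$ contributions cancel, leaving
\begin{align*}
\lfloor nt\rfloor\bigl(F_{\lfloor nt\rfloor}(y)-F_n(y)\bigr)
&= \tfrac{J_r(y)}{r!}\Bigl(\sum_{i=1}^{\lfloor nt\rfloor}H_r(\xi_i) - \tfrac{\lfloor nt\rfloor}{n}\sum_{i=1}^{n}H_r(\xi_i)\Bigr)\\
&\quad -\Bigl(1_{\{t>\tau\}}(\lfloor nt\rfloor-\lfloor n\tau\rfloor) - \tfrac{\lfloor nt\rfloor}{n}(n-\lfloor n\tau\rfloor)\Bigr)\bigl(F(y)-F(y-h_n)\bigr) + R_n,
\end{align*}
evaluated at $y = F_n^{-}(x)$, where $R_n$ is the remainder term controlled by Proposition~\ref{prop:local_alt}.

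The final step is to replace $F_n^{-}(x)$ by $F^{-}(x)$ throughout. This requires three ingredients: (a) a Bahadur-type estimate $|F_n^{-}(x)-F^{-}(x)| = O_P(d_{n,r}/(n f(F^{-}(x))))$, obtained by inverting Proposition~\ref{prop:local_alt} at $t=1$; (b) a local Lipschitz estimate on $J_r$, using that its derivative is expressible through the marginal density $f$ together with \eqref{ass:difuniform}; and (c) a mean-value estimate for $F(F_n^{-}(x)-h_n)-F(F^{-}(x)-h_n)$ using \eqref{ass:difaprox} and \eqref{assuderivative}. The additional weight loss $\epsilon_1$ in the exponent is precisely the price paid for controlling the density ratio via \eqref{assuderivative}.

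The main obstacle will be this last substitution, carried out uniformly in $x$ in the weighted supremum norm. Near the endpoints $x\to 0,1$ the density $f(F^{-}(x))$ may degenerate, amplifying the quantile perturbation, while simultaneously the drift $F(F^{-}(x))-F(F^{-}(x)-h_n)$ varies rapidly in $y$. The rate assumptions $n^\lambda=o(d_{n,r}^{1-\lambda})$, $n^\lambda d_{n,r}^{1+\epsilon_1}=o(n)$, and $d_{n,r}^{\rho+\lambda}=o(n^\rho)$, together with $h_n = c n^{-1}d_{n,r}$, are calibrated exactly so that these competing boundary effects collapse into $o_P(1)$ after normalization by $d_{n,r}^{-1}$.
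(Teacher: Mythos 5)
Your outline follows the same route as the paper: rewrite the rank process as the sequential empirical process evaluated at the empirical quantile function, invoke Proposition \ref{prop:local_alt} at that random argument, and then trade the plug-in point back for $x$ via a Glivenko--Cantelli bound, continuity of $J_r$, and a mean-value argument for the drift term controlled by \eqref{assuderivative}. (The paper implements the first step after applying the rank-preserving transformation $H_n$ and with a continuous piecewise-linear modification $\tilde F_n^{-}$ of the quantile function, which keeps the quantile comparison in the uniform scale, $\sup_x|\tilde F_n^{-}(x)-x|=\mathcal{O}_P(h_n)$, and avoids the $1/f(F^{-}(x))$ factor in your Bahadur-type estimate.)

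Two steps, as you state them, would not go through. First, ``since Proposition \ref{prop:local_alt} is a uniform bound in $y$, the random plug-in is legitimate'' is too quick: the proposition bounds the process weighted by $(\min\{y,1-y\})^{-\lambda}$ at the argument $y$, whereas the theorem requires the weight at $x$. The ratio $\bigl(\min\{\tilde F_n^{-}(x),1-\tilde F_n^{-}(x)\}/\min\{x,1-x\}\bigr)^{\lambda}$ is \emph{not} bounded in probability --- the paper's Lemma \ref{lemma:quotient_loca_alt} shows it is $\mathcal{O}_P(d_{n,r}^{\lambda})$ --- and the plug-in is only saved because Proposition \ref{prop:local_alt} delivers the quantitative rate $\mathcal{O}_P(h_n^{\min\{\rho,\lambda\}})$, which together with $d_{n,r}^{\rho+\lambda}=o(n^{\rho})$ absorbs the divergent factor. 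Your closing remark about calibrated rates gestures at this, but the mechanism (divergent weight ratio times remainder rate) is the crux and needs to be made explicit. Second, your step (b) assumes a Lipschitz estimate for $J_r$ via its derivative; $J_r$ need not be differentiable or Lipschitz. Only the H\"older-$1/2$ bound $|J_r(x)-J_r(y)|\leq C\sqrt{F(y)-F(x)}$ (Cauchy--Schwarz) is available, and this is what the paper uses in Lemma \ref{lemma:coefficient_loc_alt}, yielding the rate $\mathcal{O}_P(n^{\lambda}\sqrt{h_n})$; your argument should be rewritten around that weaker modulus, which still suffices.
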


\begin{example} It may not be obvious that the conditions \eqref{ass:difuniform} and \eqref{assuderivative} hold for specific distribution functions. Therefore,  we  discuss the standard normal distribution as an example. Assume  that $G=id$ such that $f=\varphi$ and $F=\Phi$, where $\varphi$ denotes the standard normal density and $\Phi$ the standard normal distribution function. It is well-known  that for $x\rightarrow -\infty$
\begin{align*}
\Phi(x)\approx \frac{1}{|x|}\varphi(x);
\end{align*}
see \cite{feller:1968}.
Consequently $\varphi(x)\approx |x|\Phi(x)$. 
As a result, we have
\begin{align*}
\varphi(\Phi^{-1}(x))\approx |\Phi^{-1}(x)|\Phi(\Phi^{-1}(x))=x|\Phi^{-1}(x)| \ \text{ for } \ x\rightarrow 0.
\end{align*}
As $\Phi(x)\leq e^{x}$ for $x\leq 0$, it holds that $0\geq\Phi^{-1}(x)\geq\log (x)$ for $x\leq \frac{1}{2}$ and therefore $|\Phi^{-1}(x)|\leq |\log(x)|$. With similar arguments for $x\rightarrow 1$, it follows that \eqref{ass:difuniform} holds for any $\lambda<\frac{1}{2}$.

In order to show that \eqref{assuderivative} holds, one needs a tighter upper bound. For any $K>0$, there exists a constant $C$, such that $\Phi(x)\leq Ce^{Kx}$ for $x\leq 0$, and we conclude that $|\Phi^{-1}(x)|\leq ~\frac{1}{K}|\log(x/C)|$ for $x\leq \frac{1}{2}$. We focus on the case $h>0$ because for $h<0$, the quotient of densities in \eqref{assuderivative} is smaller than $1$ and thus the difference  is bounded. For $x\leq \frac{1}{2}$ and $h>0$, we have
\begin{align*}
\left|1-\frac{\varphi(\Phi^{-1}(x)+h)}{\varphi(\Phi^{-1}(x))}\right|&=\left|\frac{\varphi(\Phi^{-1}(x))-\varphi(\Phi^{-1}(x)+h)}{\varphi(\Phi^{-1}(x))}\right|\\
&\leq \frac{h\left|\varphi'(\Phi^{-1}(x)+h))\right|}{\varphi(\Phi^{-1}(x))}=h\left|\Phi^{-1}(x)+h\right|\frac{\varphi(\Phi^{-1}(x)+h))}{\varphi(\Phi^{-1}(x))}\\
&=h\left|\Phi^{-1}(x)+h\right|e^{-h\Phi^{-1}(x)-\frac{h^2}{2}}\leq h\left|\Phi^{-1}(x)+h\right|e^{-h\Phi^{-1}(x)}
\end{align*}
as $\varphi'(x)=-x\varphi(x)$. Because $|\Phi^{-1}(x)|\leq \frac{1}{K}|\log(x/C)|$, we  arrive at
\begin{align*}
\left|1-\frac{\varphi(\Phi^{-1}(x)+h)}{\varphi(\Phi^{-1}(x))}\right|\leq \tilde{C}|\log(x/C)|e^{-h\log(x/C)/K}\leq \tilde{C}\left(\frac{x}{C}\right)^{1/K}
\end{align*}
for any $h\in(0,1]$ and some constant $\tilde{C}$. As $K$ can be chosen arbitrarily large, we  conclude that \eqref{assuderivative} holds for any $\epsilon_1>0$.
\end{example}

Based on Theorem \ref{thm:red_principle_local_alt}, under local alternatives, the asymptotic distribution of the statistic $S_n(a)$ can be derived by the same arguments  as under the assumption of stationarity, i.e., by the representation \eqref{eq:statistic} and the continuous mapping theorem.

\begin{corollary}\label{cor:as_distr_under_local alternatives}  Let the assumptions of Theorem \ref{thm:red_principle_local_alt} hold and let $h:(0, 1)\longrightarrow \mathbb{R}$ satisfy Assumption \ref{ass:finite_integral}. Then,  we have 
\begin{multline*}
d_{n,r}^{-1}S_n(a)
\overset{\mathcal{D}}{\longrightarrow}\sup_{t\in[0,1]}\bigl|\pr{Z_{r, H}(t)-tZ_{r, H}(1)}\int_0^1J_r(F^{-}(x))dh(x)\\
+ c\delta_{\tau}(t)\int_0^1f(F^{-}(x))dh(x)\bigr|,
\end{multline*}
where $\overset{\mathcal{D}}{\longrightarrow}$ denotes convergence in distribution in $D_{\lambda}\left[0, 1\right]$ and
\begin{align*}
\delta_{\tau}(t)
=\begin{cases}
t(1-\tau) \ &\text{if} \ t\leq \tau, \\
\tau(1-t)\ &\text{if} \ t> \tau.
\end{cases}
\end{align*}
\end{corollary}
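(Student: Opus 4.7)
The plan is to derive Corollary \ref{cor:as_distr_under_local alternatives} as a direct consequence of Theorem \ref{thm:red_principle_local_alt} by mimicking the argument that led from Theorem \ref{thm:red_principle} to Corollary \ref{cor:as_distr_under_stationarity}, with the extra bookkeeping needed to track the deterministic drift term coming from the alternative. First I would start from the representation \eqref{eq:statistic} and normalize by $d_{n,r}$, so that
\begin{align*}
d_{n,r}^{-1}S_n(a)=\sup_{t\in[0,1]}\left|\int_0^1 d_{n,r}^{-1}\left(\hat{G}_{\lfloor nt\rfloor}(x-)-\tfrac{\lfloor nt\rfloor}{n}\hat{G}_n(x-)\right)dh(x)\right|.
\end{align*}
Theorem \ref{thm:red_principle_local_alt} states that in the weighted supremum norm $\|\cdot\|_{\lambda-\epsilon_1}$, the integrand differs by $o_P(1)$ from the sum of a Hermite term and a deterministic drift term. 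Assumption \ref{ass:finite_integral} (applied with $\lambda$ replaced by $\lambda-\epsilon_1$, which is still admissible for $\epsilon_1$ small enough) makes the linear functional $f\mapsto\int_0^1 f(x)\,dh(x)$ continuous on $D_{\lambda-\epsilon_1}[0,1]$, so this $o_P(1)$ is preserved after integration.

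Next, I would analyse the two terms separately. For the Hermite term, Taqqu's Theorem 5.6 yields
\begin{align*}
d_{n,r}^{-1}\sum_{i=1}^{\lfloor n\cdot\rfloor}H_r(\xi_i)\overset{\mathcal{D}}{\longrightarrow}Z_{r,H}(\cdot)
\end{align*}
in $D[0,1]$, so that $d_{n,r}^{-1}(\sum_{i=1}^{\lfloor nt\rfloor}H_r(\xi_i)-\tfrac{\lfloor nt\rfloor}{n}\sum_{i=1}^n H_r(\xi_i))\Rightarrow Z_{r,H}(t)-tZ_{r,H}(1)$. Integrating pointwise against $dh(x)$ and using bounded convergence against $d\bar h$, justified by Assumption \ref{ass:finite_integral}, produces the term $(Z_{r,H}(t)-tZ_{r,H}(1))\int_0^1 J_r(F^{-}(x))\,dh(x)/r!$. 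For the deterministic drift, by $h_n=cn^{-1}d_{n,r}$ and condition \eqref{ass:difaprox},
\begin{align*}
\sup_{x\in[0,1]}(\min\{x,1-x\})^{-\lambda}\left|d_{n,r}^{-1}n\bigl(x-F(F^{-}(x)-h_n)\bigr)-cf(F^{-}(x))\right|\longrightarrow 0,
\end{align*}
while the prefactor $\mathbf 1_{\{t>\tau\}}\tfrac{\lfloor nt\rfloor-\lfloor n\tau\rfloor}{n}-\tfrac{\lfloor nt\rfloor}{n}\tfrac{n-\lfloor n\tau\rfloor}{n}$ converges uniformly in $t$ to $-\delta_\tau(t)$ (a straightforward cases analysis on $t\lessgtr\tau$). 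Integrating this deterministic piece against $dh$ yields $-c\delta_\tau(t)\int_0^1 f(F^{-}(x))\,dh(x)$; the sign is absorbed into the outer absolute value in $S_n(a)$.

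Finally I would combine these convergences and apply the continuous mapping theorem to the functional $\varphi\mapsto\sup_{t\in[0,1]}|\varphi(t)|$, which is continuous on $D[0,1]$ under the supremum norm, to obtain
\begin{align*}
d_{n,r}^{-1}S_n(a)\overset{\mathcal{D}}{\longrightarrow}\sup_{t\in[0,1]}\Bigl|(Z_{r,H}(t)-tZ_{r,H}(1))\!\!\int_0^1\!\!J_r(F^{-}(x))\,dh(x)+c\delta_\tau(t)\!\!\int_0^1\!\!f(F^{-}(x))\,dh(x)\Bigr|,
\end{align*}
as claimed. The main obstacle is the first step, namely rigorously transporting the uniform-in-$(t,x)$ approximation supplied by Theorem \ref{thm:red_principle_local_alt} through the linear functional $\int_0^1\cdot\,dh(x)$; this requires verifying that the integrand belongs to $D_{\lambda-\epsilon_1}[0,1]$ (which uses the remark that $\epsilon_1$ can be chosen as small as desired, so Assumption \ref{ass:finite_integral} remains in force) and invoking the inequality displayed just before Assumption \ref{ass:finite_integral} to control the integral by the weighted norm times $\int_0^1(\min\{x,1-x\})^{\lambda-\epsilon_1}d\bar h(x)<\infty$. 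Once this is in place, the remainder is the routine Skorokhod-space manipulation sketched above.
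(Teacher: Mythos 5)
Your proposal is correct and follows essentially the same route as the paper, which disposes of this corollary in a single sentence by invoking the representation \eqref{eq:statistic}, Theorem \ref{thm:red_principle_local_alt}, and the continuous mapping theorem; your write-up merely makes explicit the continuity of $f\mapsto\int_0^1 f(x)\,dh(x)$ on the weighted Skorohod space via Assumption \ref{ass:finite_integral}, Taqqu's limit theorem for the Hermite term, and the uniform convergence of the drift prefactor to $-\delta_\tau(t)$. The only cosmetic point is the factor $1/r!$, which appears in your intermediate step (consistently with the reduction principle) but not in your final display --- an inconsistency that the paper's own statement of the corollary shares.
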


\subsection{Asymptotic Relative Efficiency for level shifts}\label{sec:ARE}

The goal of this section is to calculate the asymptotic relative efficiency of rank tests that are based on two different score functions $a_1$ and $a_2$.
For this, we calculate the number of observations needed to detect a level shift of height  $h$ at time $\tau$ with a test of predefined asymptotic level $\alpha$ and asymptotic  power  $\beta$.
With $n_1(h)$ and $n_2(h)$ corresponding to these numbers for $a_1$ and $a_2$,
we define the asymptotic relative efficiency of the  tests by
\begin{align*}
\lim\limits_{h\rightarrow 0}\frac{n_1(h)}{n_2(h)}
\end{align*}
assuming that this limit exists.

An asymptotic relative efficiency that is smaller than 1 indicates that the change-point test that corresponds to the score function $a_2$ needs on large scale more observations than  the change-point test that corresponds to the score function $a_1$ in order to detect a given jump on the same level with the same power. It is therefore called {\em less efficient}.

The above definition of asymptotic relative efficiency has as well been considered in \cite{dehling:rooch:taqqu:2017} for a comparison of change-point tests. \cite{dehling:rooch:taqqu:2017} show that, when considering the asymptotic relative efficiency of CuSum and Wilcoxon test,  the above limit  exists and does not depend on the choice of $\tau$, $\alpha$, or $\beta$. 
 
 In order to determine the asymptotic relative efficiency of two rank-based testing procedures, we proceed in the same way.
 For this, we calculate a quantity that is related to the asymptotic relative efficiency, namely the ratio of the sizes of level shifts that can be detected by the two tests, based on the same number of observations $n$, for given values of $\tau$, $\alpha$, and $\beta$. We denote the corresponding level shifts by $\Delta_1(n)$ and $\Delta_2(n)$, respectively, assuming that these numbers depend on $n$ in the following way:
\begin{align*}
\Delta_1(n)\sim c_1\frac{d_{n, r}}{n} \ \text{ and } \ \Delta_2(n)\sim c_2\frac{d_{n, r}}{n}.
\end{align*}

In order to simplify the succeeding argument, we consider a one-sided change-point test, thus rejecting the hypothesis of no change-point for large values of
\begin{align*}
\max\limits_{1\leq k<n}S_{k, n}(a_1) \ \text{ and } \ \max\limits_{1\leq k<n}S_{k, n}(a_2).
\end{align*}

 The rank tests  reject the null hypothesis when the statistics
\begin{align*}
&\left(\frac{1}{r!}\int_0^1 J_r\left(F^{-}(x)\right)dh_1(x)\right)^{-1}\max\limits_{1\leq k<n}S_{k, n}(a_1), \\
&\left(\frac{1}{r!}\int_0^1 J_r\left(F^{-}(x)\right)dh_2(x)\right)^{-1}\max\limits_{1\leq k<n}S_{k, n}(a_2)
\end{align*}
exceed the upper $\alpha$ quantile $q_{\alpha}$ of the distribution of 
\begin{align*}
\sup\limits_{0\leq t\leq 1}BB_{r, H}(t), \ BB_{r, H}(t)\defeq Z_{r, H}(t)-tZ_{r, H}(1).
\end{align*}
Thus, if we want the two tests to have identical power, we have to choose $c_1$ and $c_2$
 that
\begin{align*}
&\left(\frac{1}{r!}\int_0^1 J_r\left(F^{-}(x)\right)dh_1(x)\right)^{-1}
c_1\psi_{\tau}(t)\int_0^1 f_G(F^{-}(x))dh_1(x)\\
=&\left(\frac{1}{r!}\int_0^1 J_r\left(F^{-}(x)\right)dh_2(x)\right)^{-1}
c_2\psi_{\tau}(t)\int_0^1 f_G(F^{-}(x))dh_2(x)
\end{align*}
yielding
\begin{align*}
\frac{\Delta_1(n)}{\Delta_2(n)}=\frac{c_1}{c_2}=\frac{\int_0^1 J_r\left(F^{-}(x)\right)dh_1(x)}{\int_0^1 J_r\left(F^{-}(x)\right)dh_2(x)}
\frac{\int_0^1 f_G(F^{-}(x))dh_2(x)}{\int_0^1 f_G(F^{-}(x))dh_1(x)}.
\end{align*}

\begin{example}\label{ex:efficiency}
Assume  that $G=id$ such that $f=\varphi$ and $F=\Phi$, where $\varphi$ denotes the standard normal density and $\Phi$ the standard normal distribution function.
In this case, we have
\begin{align*}
\int_0^1 J_1\left(F^{-}(x)\right)dh_i(x)
=\int_0^1 J_1\left(\Phi^{-1}(x)\right)dh_i(x)
=\int_0^1 \E\left(1_{\left\{\xi_1\leq \Phi^{-1}(x)\right\}}\xi_1\right)dh_i(x)\\
=\int_0^1 \int_{-\infty}^{\Phi^{-1}(x)}y\varphi(y)dydh_i(x)
=-\int_0^1 \varphi(\Phi^{-1}(x))dh_i(x)
=-\int_0^1 f(F^{-1}(x))dh_i(x)
\end{align*}
for $i=1, 2$.
\end{example}
As a result, the asymptotic relative efficiency of rank-based change-point tests is always $1$ when considering Gaussian time series. Since \cite{dehling:rooch:taqqu:2017} have shown that the Wilcoxon-type change-point test has a relative efficiency of 1 with respect to the CuSum change-point test, we may conclude that the asymptotic efficiency of all rank-based change-point tests corresponds to the  asymptotic efficiency of the CuSum test under the assumption of Gaussian data. However, for other marginal distributions, this might be different. In particular, the simulation studies  considered in  Section \ref{sec:simu} indicate  that rank-based change-point tests have a higher empirical power for heavy-tailed marginal distributions.

\section{Practical implementation}\label{sec:practical}

In this section, we describe how to meet
 challenges that go along with an implementation of the established rank-based change-point tests in practice.
For this, note that an application of
rank tests to a given data set
 presupposes   determination of the scaling factor $d_{n, r}$, which
satisfies $d_{n, r}^2\sim c_{r, D} n^{2-rD}L^r(n)$, where $c_{r, D}$ is a constant depending on $r$ and $D$; see \cite{dehling:rooch:taqqu:2013a}.
In statistical practice,  the parameters $D$, $r$ and the function $L$ are usually unknown.
With regard to the practical implementation of rank-based change-point tests,  we therefore propose to replace the deterministic scaling of rank-based statistics by a data-driven standardization,  i.e.,  by a normalizing sequence that depends on the given realizations only and which is therefore referred to as {\em self-normalization}.

Although, by the consideration of self-normalized statistics, we dispose of unknown quantities in the computation of test statistics, we will show that the resulting,
self-normalized  test statistics
converge in distribution to  limits 
 that depend on unknown parameters (the Hurst index $H$ and the Hermite rank $r$), as well.
To overcome this problem in practice, a subsampling procedure is considered as an alternative to basing test decisions on the limit distributions of test statistics. 

Taken by itself, both methods,  i.e.,  self-normalization and subsampling, make  applications of change-point tests more feasible.
Nonetheless, the particular charm of their practical implementation lies in the combination of the two methods.

\subsection{Self-normalized rank tests}

The concept of self-normalization has recently been applied to several testing procedures in change-point analysis. Originally established by \cite{lobato:2001}  in another testing context, it has been  adapted   to the change-point problem in  \cite{shao:zhang:2010} by definition of a self-normalized Kolmogorov-Smirnov test statistic. In these papers, short-range dependent processes are considered. An extension  to possibly long-range dependent processes was introduced by Shao, who established a self-normalized  change-point test based on the  CuSum statistic; see  \cite{shao:2011}. 
Several inference problems, including a self-normalized
cumulative sum test for the change-point problem
and  a  self-normalization-based  wild  bootstrap
 adjusting for time-dependent variances are considered in \cite{zhao:li:2013}.
 CuSum-based procedures for sequential monitoring of time series with respect to structural changes
are proposed by 
\cite{dette:2019} and 
\cite{chan:ng:yau:2018}, among others.
\cite{dette:kocot:volgushev:2020} extend the concept of self-normalization to develop a methodology for testing the null hypothesis of no relevant deviation in functional time series data against the alternative of relevant changes.
\cite{zhang:2018}
 propose a self-normalized change-point test that  does not require  a priori information on the number of  change-points  and can thus be considered as unsupervised.
\cite{pevsta:wendler:2018}
combine  self-normalized CuSum-type statistics and the wild bootstrap, thereby establishing completely
data-driven change-point tests.
    A self-normalized version of the
Wilcoxon change-point test   is considered in \cite{betken:2016} and \cite{betken:kulik:2019}.
For further, recent discussions and references on self-normalization in different contexts, we refer to \cite{shao:2015}.

The definition of self-normalized rank statistics
is  motivated with reference to an application of a self-normalization procedure to the CuSum statistic.
For this, it is crucial to note that rank statistics arise from  an application of the CuSum statistic to the scores $a(R_1), \ldots, a(R_n)$: Given observations $X_1, \ldots, X_n$, the CuSum test bases test decisions on the statistic
\begin{align*}
C_n\defeq \max\limits_{1\leq k< n}\left|\sum\limits_{i=1}^{k} X_i-\frac{k}{n}\sum\limits_{j=1}^n X_j\right|, 
\end{align*}
while rank-based change-point tests decide in favor of a change-point in the data for large values of the statistic
\begin{align*}
S_n(a)\defeq \max\limits_{1\leq k< n}\left|\sum\limits_{i=1}^{k} a(R_i)-\frac{k}{n}\sum\limits_{j=1}^n a(R_j)\right|.
\end{align*}
Therefore, it seems natural to choose a data-driven normalization for rank statistics by evaluation of the self-normalized CuSum statistic, defined in \cite{shao:2011},  in $a(R_1), \ldots, a(R_n)$.
For this reason, we define the self-normalized rank statistic  for the change-point problem by
\begin{align}
T_n (a) &\defeq\max_{1\leq k <  n}\left|T_{k, n}(a)\right|, \label{eq:SW_n}\\ T_{k, n}(a)&
\defeq\frac{S_{k; 1, n}(a)}{\bigg\{\frac{1}{n}\sum_{t=1}^k S_{t;1, k}^2(a)+\frac{1}{n}\sum_{t=k+1}^n S_{t; k+1, n}^2(a)\bigg\}^{1/2}}, \label{eq:SN_rank_two_sample}
\intertext{where}
S_{t; j, k}(a)&\defeq\sum\limits_{h=j}^t\left(a(R_h)-\bar{a}_{j, k}\right) \ \ \text{with} \ \ \bar{a}_{j, k}\defeq\frac{1}{k-j+1}\sum\limits_{t=j}^ka(R_t).\notag
\end{align}

In order to derive the asymptotic distribution of $T_n(a)$,
recall that
\begin{align*}
S_{k; 1, n}(a)=\sum\limits_{i=1}^ka(R_i)-\frac{k}{n}\sum\limits_{i=1}^na(i)
=-\int_0^1 \left(\hat{G}_{k}(x-)-\frac{k}{n}\hat{G}_{n}(x-)\right)dh(x),
\end{align*}
where $\hat{G}_{k}(x)
\defeq\sum_{i=1}^{k}1_{\left\{\frac{1}{n+1}R_i\leq x\right\}}$, 
and note that 
\begin{align*}
T_{\lfloor nt\rfloor, n}(a)\defeq G_{S _{\lfloor nt\rfloor; 1, n}(a)}+\mathcal{O}_P(1), \quad  t\in [0, 1],
\end{align*}
where for $f\in D\left[0, 1\right]$ the function $G_f\in D\left[0, 1\right]$ is defined by
\begin{align*}
G_f(t)\defeq\frac{f(t)}{V_f(t)}, \\ 
V_f(t)\defeq\bigg\{\displaystyle &\int_0^{t}\left(f(s)-\frac{s}{t}f(t)\right)^2ds\\
&+\displaystyle\int_{t}^1\left(f(s)-f(t)-\frac{s-t}{1-t}\left(f(1)-f(t)\right)\right)^2ds\bigg\}^{\frac{1}{2}}.
\end{align*}
As a result, the limit of the self-normalized process $T_{\lfloor nt\rfloor, n}(a)$, $t \in [0,1]$, can be derived from the limit distribution of the process 
$S_{\lfloor nt\rfloor; 1, n}(a)$, $t\in \left[0, 1\right]$.

Under the hypothesis,  i.e.,  under the assumption of a stationary data-generating process, 
 Theorem \ref{thm:red_principle} 
and the argument that proves Theorem 1 in  \cite{betken:2016}
establish the convergence  of the self-normalized rank statistics $T_n(a)$:

\begin{corollary}\label{cor:limit_sn_statistics}
Let the assumptions of Theorem \ref{thm:red_principle} hold and let $h:(0, 1)\longrightarrow \mathbb{R}$ satisfy Assumption \ref{ass:finite_integral}. Then, we have
\begin{align*}
T_n(a)\overset{\mathcal{D}}{\longrightarrow} 
\sup\limits_{t\in [0, 1]}\frac{ \left|Z_{r, H}(t)-t Z_{r, H}(1)\right|}{
\Bigl\{\int_0^{t}V_{r, H}^2(s; 0, t)ds+\int_{t}^1 V_{r, H}^2(s; t, 1)ds\Bigr\}^{\frac{1}{2}}}
\intertext{with}
V_{r, H}(s; s_1, s_2)=Z_{r, H}(s)-Z_{r, H}(s_1)-\frac{s-s_1}{s_2-s_1}\left\{Z_{r, H}(s_2)-Z_{r, H}(s_1)\right\}
\end{align*}
for $s\in \left[s_1, s_2\right]$, $0< s_1< s_2< 1$.
\end{corollary}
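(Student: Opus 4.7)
The plan is to deduce the result by writing $T_n(a)$ as the value at the supremum of a continuous scale-invariant functional evaluated on the process $t\mapsto d_{n,r}^{-1}S_{\lfloor nt\rfloor;1,n}(a)$, whose weak limit is already delivered by Theorem \ref{thm:red_principle} combined with Taqqu's invariance principle.

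First I would make rigorous the expansion $T_{\lfloor nt\rfloor,n}(a)=G_{S_{\lfloor n\cdot\rfloor;1,n}(a)}(t)+o_P(1)$ uniformly in $t$. This is exactly the step carried out in Betken (2016) for the Wilcoxon case and depends only on replacing the Riemann-type sums defining the empirical centering constants $\bar a_{j,k}$ and the discrete self-normalizer by the corresponding integrals; the error is negligible since the scale-invariance of $G_f$ kills any constant multiplicative factor and the remaining discretization defects are of order $1/n$.

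Next I would feed Theorem \ref{thm:red_principle} into the integration-by-parts representation
\[
S_{\lfloor nt\rfloor;1,n}(a)=-\int_0^1\Bigl(\hat G_{\lfloor nt\rfloor}(x-)-\tfrac{\lfloor nt\rfloor}{n}\hat G_n(x-)\Bigr)\,dh(x).
\]
Using the weighted-norm bound $|\int f\,dh|\le\|f\|_\lambda\int_0^1(\min\{x,1-x\})^\lambda\,d\bar h(x)$ derived in Section \ref{sec:weighted_space}, together with Assumption \ref{ass:finite_integral}, the weighted supremum approximation of Theorem \ref{thm:red_principle} upgrades to
\[
\sup_{t\in[0,1]}d_{n,r}^{-1}\Bigl|S_{\lfloor nt\rfloor;1,n}(a)-\kappa\Bigl(\sum_{i=1}^{\lfloor nt\rfloor}H_r(\xi_i)-\tfrac{\lfloor nt\rfloor}{n}\sum_{i=1}^nH_r(\xi_i)\Bigr)\Bigr|=o_P(1),
\]
with $\kappa\defeq-\frac{1}{r!}\int_0^1 J_r(F^-(x))\,dh(x)$. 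By Taqqu's theorem, $d_{n,r}^{-1}\sum_{i=1}^{\lfloor n\cdot\rfloor}H_r(\xi_i)\Rightarrow Z_{r,H}(\cdot)$ in $(D[0,1],\|\cdot\|_\infty)$, so
\[
d_{n,r}^{-1}S_{\lfloor n\cdot\rfloor;1,n}(a)\overset{\mathcal D}{\longrightarrow}\kappa\,(Z_{r,H}(\cdot)-\,\cdot\,Z_{r,H}(1)).
\]

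Finally I would apply the continuous mapping theorem to the functional $f\mapsto\sup_{t\in[0,1]}|G_f(t)|$. Two points need to be checked: (i) scale-invariance, so that the nonzero constant $\kappa$ cancels in the ratio $f/V_f$; and (ii) continuity of the map at almost every sample path of the limit. Continuity holds on the subset of $D[0,1]$ on which $V_f(t)>0$ for all $t$ in a fixed subinterval bounded away from $\{0,1\}$, and the Hermite Brownian bridge $Z_{r,H}(t)-tZ_{r,H}(1)$ has a.s.\ non-degenerate continuous paths, so $V_f$ is a.s.\ strictly positive on $(0,1)$. A direct algebraic computation (using $f(1)=0$) identifies the two integrands in $V_f$ with $V_{r,H}(s;0,t)^2$ and $V_{r,H}(s;t,1)^2$, which yields precisely the limit law stated in the Corollary.

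The main obstacle is the upgrade from the weighted-uniform approximation in Theorem \ref{thm:red_principle} to an ordinary uniform approximation for $S_{\lfloor nt\rfloor;1,n}(a)$; this requires choosing $\lambda<1/3$ simultaneously compatible with the integrability demand in Assumption \ref{ass:finite_integral} and with the rate conditions of Theorem \ref{thm:red_principle}. Once that is in place, the rest is a mechanical application of the continuous mapping theorem along the lines of the proof of Theorem 1 in \cite{betken:2016}.
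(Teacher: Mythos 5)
Your proposal is correct and follows essentially the same route the paper takes: the paper derives Corollary~\ref{cor:limit_sn_statistics} by combining the reduction principle of Theorem~\ref{thm:red_principle} (upgraded to a uniform approximation of $d_{n,r}^{-1}S_{\lfloor nt\rfloor;1,n}(a)$ via the weighted-norm bound and Assumption~\ref{ass:finite_integral}) with Taqqu's invariance principle and the continuous-mapping/scale-invariance argument for the self-normalizing functional $G_f$ from the proof of Theorem~1 in \cite{betken:2016}. The points you flag---compatibility of a single $\lambda$ with both Assumption~\ref{ass:finite_integral} and the rate conditions, and continuity of $f\mapsto\sup_t|G_f(t)|$ at the limit paths---are exactly the details the paper delegates to that reference.
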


\subsection{Subsampling}

The basic idea of resampling procedures is to approximate the distribution function $F_{T_n}$ of a considered statistic $T_n\defeq T_n(X_1, \ldots, X_n)$ by the empirical distribution of values of the statistic computed over subsets of the original sample.
The so-called {\em sampling-window method}, studied by \cite{politis:romano:1994},  \cite{hall:jing:1996}, and \cite{sherman:carlstein:1996},  utilizes
evaluations of the test statistic in 
 subsamples of successive observations,  i.e.,  for some blocklength $l_n<n$, the realizations  $T_{l_n, k}\defeq T_{l_n}\left(X_k, \ldots, X_{k+l_n-1}\right)$,  $k=1, \ldots, m_n$, where $m_n\defeq n-l_n+1$, are considered. As a result,  multiple (though dependent) realizations of the test statistic $T_{l_n}$ are obtained.
Due to the fact that 
 consecutive observations are chosen, the subsamples
retain the dependence structure of the original sample, so that 
the empirical distribution function of  $T_{l_n, 1}, \ldots, T_{l_n, m_n}$, defined by
\begin{align}\label{eq:subsampling_estimator}
\widehat{F}_{m_n,  l_n}(t)\defeq\frac{1}{m_n}\sum\limits_{k=1}^{m_n}1_{\left\{T_{l_n, k}\leq t\right\}},
\end{align}
can be considered as an appropriate estimator for $F_{T_n}$.

The validity of the subsampling procedure is typically established by proving
 that the distance between $\widehat{F}_{m_n,  l_n}$ and  $F_{T_n}$
vanishes as the number of observations tends to $\infty$,  i.e.,  by showing that
\begin{align*}
\left|\widehat{F}_{m_n,  l_n}(t)-F_{T_n}(t)\right|\overset{\mathcal{P}}{\longrightarrow}0,  \ \ \text{as $n\rightarrow\infty$,}
\end{align*}
 for all points of continuity $t$ of $F_T$.

 It is shown in \cite{sherman:carlstein:1996} that the sampling-window method is consistent for any time series satisfying an $\alpha$-mixing condition and for any measurable statistic converging in distribution to a non-degenerate limit variable.
Thereby, consistency of the sampling-window method can be derived for an extensive class of short-range dependent processes under the mildest possible assumptions on  the blocklength $l_n$ and the considered statistic $T_n$.
In the long-range dependent case,
the validity of subsampling
has  been shown to hold for specific statistics under various  model assumptions. \cite{hall:et:al:1998}  prove consistency of the sampling-window method for the sample mean as well as a studentized version of the sample mean under the assumption of  subordinated Gaussian processes.  \cite{nordman:lahiri:2005} attained   consistency results with respect to the same statistics for long-range dependent linear processes with possibly non-Gaussian innovations. For this model, 
an alternative proof for consistency  can  be found in \cite{beran:feng:ghosh:kulik:2013}.
\cite{zhang:et:al:2013} generalize these results by proving consistency
with respect to the sample mean under the assumption of  subordinated long-range dependent linear processes with possibly non-Gaussian innovations.
\cite{jach:et:al:2012} provide a result on the validity of subsampling for a general class of statistics $T_n$ and certain heavy-tailed long-range dependent time series that follow a long memory stochastic volatility model.  However, their results are restricted by  assumptions that are difficult to check in practice. Moreover, although not explicitly stated in \cite{jach:et:al:2012}, the proof of consistency only holds for statistics $T_n$ that are Lipschitz-continuous (uniformly in $n$); see \cite{jach:2016_corrigendum}.  Many robust statistics do not satisfy this assumption. In fact, rank-based change-point test statistics 
can be taken as examples for non-Lipschitz-continuous statistics.

General results on the validity of subsampling for long-range dependent time series are established in  \cite{bai:taqqu:2015b} and, independently in \cite{betken:wendler:2016}.
Neither of both consistency results makes any restrictive demands concerning the statistic $T_n$ or the considered time series, such as finite moments of the data-generating variables, or  continuity of the considered statistics.
 As a result,  both results are applicable
to heavy-tailed random variables and rank-based test statistics.
For this reason,  in the following sections, we can formally justify an application of the sampling-window method in simulations and data analysis by referring to the aforementioned  results.

\section{Simulations}\label{sec:simu}

In the following, the finite sample performance of self-normalized, rank-based testing procedures is analyzed in the context of testing 	for changes in the mean of a given set of observations $X_1, \ldots, X_n$. 
More precisely, we will consider two different rank-based testing procedures,
the self-normalized Wilcoxon change-point test and the self-normalized Van der Waerden change-point test, 
and compare their finite sample performance to that of the self-normalized CuSum
change-point test.
For this purpose, 
the rejection rates of all three testing procedures  are computed for simulated subordinated Gaussian 
time series $X_n$, $n\in \mathbb{N}$, $X_n=G(\xi_n)$, where $\xi_n$, $n\in \mathbb{N}$,
is a fractional Gaussian noise sequence generated by the function \verb$fgnSim$ from the \verb$fArma$ package in \verb$R$.

We consider the following choices of marginal distributions that, for subordinated Gaussian time series, are determined by the function $G$:
\begin{enumerate}
\item
Normal margins: We choose $G(t) = t$.
In this case, the Hermite coefficient $J_1(G; x)$ is not equal to $0$ for all $x\in \mathbb{R}$  (see \cite{dehling:rooch:taqqu:2013a}), so that $r= 1$, where $r$ denotes the Hermite rank of $1_{\left\{G(\xi_i)\leq x\right\}}-F(x), x\in \mathbb{R}$. 
\item Pareto margins:
In order  to get standardized Pareto-distributed data which has a representation as a functional of a Gaussian process, we consider the transformation 
\begin{align*}
G(t)=\left(\frac{\alpha k^2}{(\alpha -1)^2(\alpha-2)}\right)^{-\frac{1}{2}}\left(k(\Phi(t))^{-\frac{1}{\alpha}} -\frac{\alpha k}{\alpha -1}\right) 
\end{align*}
with parameters $k, \alpha>0$ and with $\Phi$ denoting the standard normal distribution function.
Since $G$ is a strictly decreasing function, it follows  by Theorem 2 in \cite{dehling:rooch:taqqu:2013a}  that $r= 1$, where $r$ denotes the Hermite rank of $1_{\left\{G(\xi_i)\leq x\right\}}-F(x), x\in \mathbb{R}$.
\item  Cauchy margins:
In order  to get standardized Pareto-distributed data which has a representation as a functional of a Gaussian process, we consider the transformation 
\begin{align*}
G(t)=\tan\left(\pi\left(\Phi(t)-\frac{1}{2}\right)\right)
\end{align*}
with $\Phi$ denoting the standard normal distribution function.
Since $G$ is a strictly increasing function, it follows  by Theorem 2 in \cite{dehling:rooch:taqqu:2013a}  that $r= 1$, where $r$ denotes the Hermite rank of $1_{\left\{G(\xi_i)\leq x\right\}}-F(x), x\in \mathbb{R}$.
\item  $\chi^2(1)$ margins:
In order  to get standardized $\chi^2(1)$-distributed data which has a representation as a functional of a Gaussian process, we consider the transformation 
\begin{align*}
G(t)=\frac{1}{2}\left(t^2-1\right).
\end{align*}
In this case, the Hermite coefficient $J_1(G; x)$ equals $0$ for all $x\in \mathbb{R}$, while $J_2(G; x)$ is not equal to $0$.
It follows that $r= 2$, where $r$ denotes the Hermite rank of $1_{\left\{G(\xi_i)\leq x\right\}}-F(x), x\in \mathbb{R}$.
\end{enumerate}

All calculations are based on $5,000$ realizations of time series and test decisions are based on an application of the sampling-window method for a significance level of $5\%$, meaning that the values of the test statistics are compared to the   95\%-quantile of the empirical distribution function $\widehat{F}_{m_n, l_n}$ defined by  \eqref{eq:subsampling_estimator}. The empirical rejection frequencies for all three testing procedures and a sample size of $n=500$ can be found in Figure \ref{fig:simulations}.

As expected, the empirical power increases for greater values of the level shift $h$. Furthermore, the empirical power is higher for breakpoints located in the middle of the sample ($\tau=0.5$) than for change-point locations that lie close to the boundary of the testing region ($\tau=0.25$). In accordance with the asymptotic considerations in Example \ref{ex:efficiency}, the three tests behave very similar under normality (upper panels). For the heavy-tailed Pareto and Cauchy distribution (middle panels), the rank-based test clearly outperform the CuSum test. In particular,  the empirical power of the CuSum test is not much greater than 5\% when considering Cauchy distributed data.
Moreover,  the rank-based testing procedures 
yield a better power than the CuSum test   when considering $\chi^2$-distributed time series (lower panel), i.e., subordinated Gaussian time series with an Hermite rank $r=2$.

Detailed simulation results can be found in  Tables \ref{table:normal-distribution}, \ref{table:Pareto-distribution}, \ref{table:Cauchy-distribution}, and \ref{table:squares_of_fGn} in the appendix. These display results for sample sizes $n=300$ and $n=500$ and for different block lengths ($l_n~=~\lfloor n^{\gamma}\rfloor$ with  $\gamma\in \left\{0.4, 0.5, 0.6\right\}$). Not surprisingly, an increasing sample size goes along with an improvement of the finite sample performance,  i.e.,  the empirical size approaches the level of significance and the empirical power increases. All three  testing procedures have an empirical size that is
relatively close to the level of significance, an observation  that seems to be typical of self-normalized testing procedures as it corresponds to the   so-called {\em better size but less power} phenomenon for self-normalized tests, which has also been observed in \cite{shao:2011}, \cite{shao:zhang:2010}, and \cite{betken:2016}. The block length $l_n=\sqrt{n}$ seems to give the best overall performance, although it does not yield the best results for each and every scenario.

\begin{figure}[H]
\begin{center} 
\includegraphics[width=\textwidth]{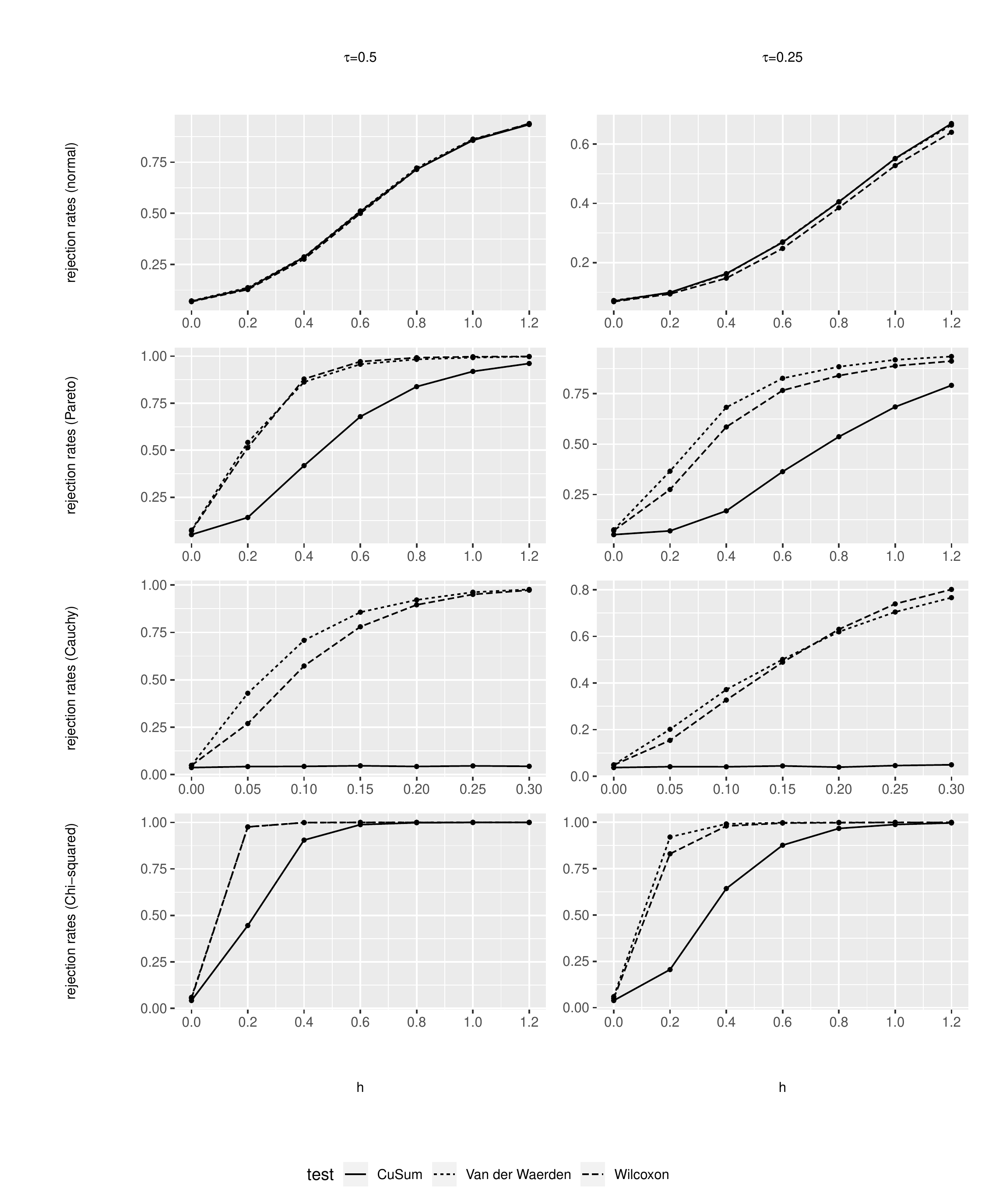}
\caption{Rejection rates of the  self-normalized CuSum, the self-normalized Wilcoxon  and the self-normalized Van der Waerden  change-point tests obtained by subsampling with block length $l_n = 22$ for  transformed fractional Gaussian noise time series of length $n = 500$ with Hurst parameter $H = 0.7$, marginal standard normal, Pareto, and Cauchy distribution and a change in location of height $h$ after a proportion $\tau$ of the simulated data.
}
\label{fig:simulations}
\end{center}
\end{figure}

Given normally distributed observations, there do not seem to be large deviations in the rejection rates of the three testing procedures. Nonetheless,  the  Van  der Waerden test tends to be less conservative, but more efficient  than  the other testing procedures. At least  for independent data, this observations  corresponds to the fact that considering normal scores (yielding the Van  der Waerden statistic) is known to result in  a more efficient testing procedure; see \cite{hodges:lehmann:1961}. For Pareto(3, 1)-distributed observations and Cauchy-distributed observations  the two rank-based testing procedures clearly outperform the self-normalized CuSum test in that they yield considerably higher empirical rejection rates under the alternative. This observation specifically applies to the Cauchy-distributed time series as for these the self-normalized CuSum test has an extremely low power  (which seems to be independent of the height and the location of the change-point).
When considering subordinated Gaussian time series with Hermite rank $r=2$, that is $\chi^2$-distributed observations,  the rank-based testing procedures also perform
better than the CuSum test. 
Comparing Wilcoxon and Van der Waerden test with respect to 
these time series, again the Van der Waerden test shows a slight tendency of being more efficient than the Wilcoxon test. 

\section{Data examples}\label{sec:data}

In the following,  three different data sets are analyzed with regard to structural changes by  an application of rank-based testing procedures and the methodologies described in Section \ref{sec:practical}.  The observations stem from hydrology, finance, and network traffic monitoring,  areas that typically  give rise to long-range dependent time series.

The data sets from hydrology and network traffic monitoring have been well-studied in the literature, such that we will  embed our analysis into the context of existing results. As a relatively recent data set, the considered financial time series, which 
describes the performance of the British stock market against the background of the United Kingdom European Union membership referendum in 2016, has not yet been considered in the context of change-point analysis. With our consideration of this data, we hope to pave the way for new discussions in applied change-point analysis. Additionally,  we aim at a comparison of rank-based change-point tests resulting from different score functions, as well as a comparison of rank-based change-point tests to CuSum-based testing procedures in practice.

\subsection{Argentina rainfall}

The first data set consists of 113 measurements of  yearly  rainfall
volume in the Argentinian   province of Tucum\'{a}n  from 1884 to 1996; see Figure \ref{fig:ArgentinaRain}. 
The  data was monitored by the Agricultural Experiment Station
Obispo  Colombres
(EEAOC). It was
provided by Dr. C\'{e}sar M. Lamelas, a meteorologist at EEAOC, 
and reported in \cite{wu:woodroofe:mentz:2001}. 
The  construction of a dam on the Sal\'{i} river, one of the main running waters in the province of Tucum\'{a}n,   between 1952 and 1962
may  serve as an  explanation for an abrupt change in the data.

\begin{figure}[H]
\begin{center} 
\includegraphics[width=\textwidth]{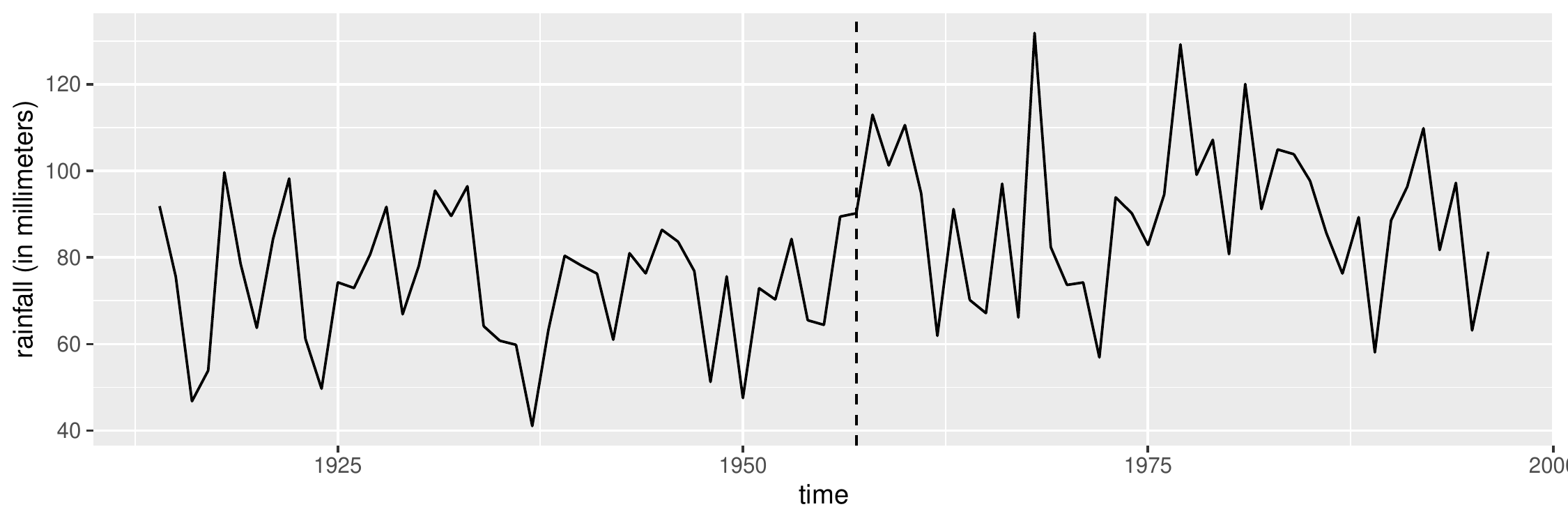}
\caption{
Annual rainfall volume (in millimeters)  in the Argentinian   province of Tucum\'{a}n  from 1884 to 1996.
}
\label{fig:ArgentinaRain}
\end{center}
\end{figure}

We base our analysis of the data on the 83 measurements of yearly rainfall  from 1914 to 1996.
As examples of  rank-based change-point tests we choose the self-normalized Wilcoxon and the self-normalized Van  der Waerden test and compare  the performances of these tests to that resulting from the change-point test based on the self-normalized CuSum statistic  defined in \cite{shao:2011}.
\begin{figure}[H]
\begin{center} 
\includegraphics[width=\textwidth]{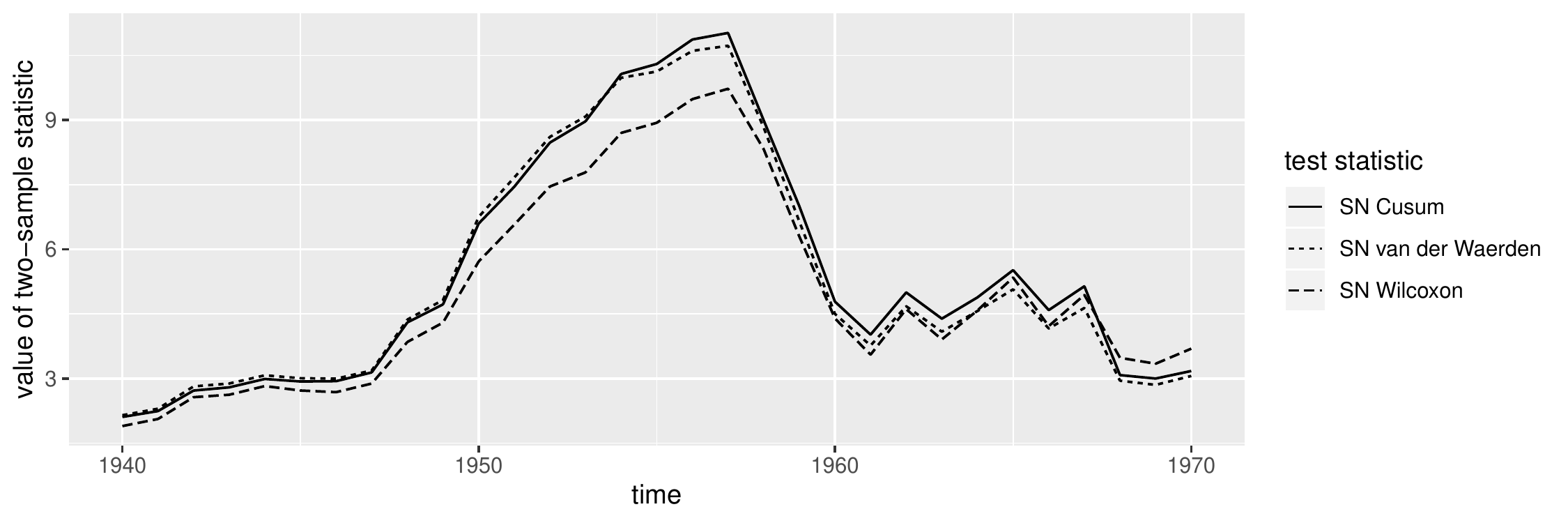}
\caption{
Values of the self-normalized two-sample CuSum, Wilcoxon, and Van der Waerden statistics
between 1940 and 1970 computed on the basis
of the annual rainfall volume   in the Argentinian   province of Tucum\'{a}n  from 1914 to 1996.
}
\label{fig:ArgentinaRain_two_sample_statistics}
\end{center}
\end{figure}

Figure \ref{fig:ArgentinaRain_two_sample_statistics} depicts the values of the  two-sample statistics $T_{k, n}(a_i)$, $i=1, 2$, as defined in 
formula \eqref{eq:SN_rank_two_sample}, with $a_1(i)=(n+1)^{-1}i$ (the score function that yields the self-normalized two-sample Wilcoxon statistic) and $a_2(i)=\Phi^{-1}\left((n+1)^{-1}i\right)$ (the score function that yields the self-normalized two-sample Van  der Waerden statistic) and  the self-normalized two-sample CuSum statistic as defined in \cite{shao:2011},  between 1940 and 1970.
All three line plots achieve their maximum in the year 1957, thereby indicating a potential change-point location that corresponds to this year.
In this regard, our findings agree with the results of previous analysis and the expectation of a change occurring as a consequence of the  construction of a dam on the Sal\'{i} river  between 1952 and 1962.

However, approximating the distribution of the self-normalized  test statistics
by the sampling window method with block size $l=\lfloor \sqrt{n}\rfloor =9$ 
yields $p$-values of $0$ for the self-normalized CuSum test,  $0.0225$ for the self-normalized Van  der Waerden test and $0.0858$ for the self-normalized Wilcoxon test,  i.e.,  at a level of significance of 5\% 
 the self-normalized Van  der Waerden and the self-normalized CuSum test reject the hypothesis, while  the self-normalized Wilcoxon test decides in favor of the hypothesis of no change. In order to decide which test decision is more plausible, we compare our findings with previous analysis on that same data set:

\cite{wu:woodroofe:mentz:2001} base a change-point test on isotonic regression and consider additionally a trend detection test for stationary time series
proposed in \cite{brillinger:1989}. The isotonic regression method of  \cite{wu:woodroofe:mentz:2001} strongly favors a location shift in the data between 1955 an 1956, whereas Brillinger's test does not show any evidence of a change.  \cite{chen:gupta:pan:2006} examine the possibility of changes in mean and variance of the observations. For this, they apply  two different information criteria, both suggesting a change-point occurring  around 1954. Since, according to their analysis, their is no sufficient indication of a change in the variability of the time series,  the change-point is attributed to an increase in the mean precipitation.

\cite{alvarez:dey:2009} provide statistically significant evidence for  a change-point 
by carrying out Bayesian inference. \cite{jandhyala:fotopoulos:you:2010}  argue that assuming independence of the data-generating random variables cannot be justified with regard to the precipitation data, indicating that valid change-point analysis has to account for serial correlations among the observations. By adjusting for dependence, they  base their analysis of the data on a Bayes-type statistic developed in \cite{jandhyala:macneill:1991} and a likelihood ratio statistic  studied in
\cite{csorgo:horvath:1997}. Both procedures  provide statistical evidence for a
change in the mean of the data  around 1956.   Incorporating the prior knowledge about a potential change-point location (construction of the dam on the  Sal\'{i} river) by restricting the search area for the change-point accordingly, 
\cite{shao:zhang:2010} identify a change in  the data on the basis of a self-normalized CuSum statistic.

Since CuSum-type hypothesis tests  for a change in mean may be susceptible to outliers in the data, \cite{shao:zhang:2010} additionally note that  the self-normalized  median test, as a robust alternative to CuSum-based testing procedures,  rejects the hypothesis of stationarity at the 5\% significance level, as well. \cite{vogel:wendler:2017} provide further evidence for a change in location by pointing out that Hodges-Lehmann and CuSum-type tests, resulting as special cases from the consideration of U-quantile-based change-point tests, both reject the null hypothesis of no change at the 5\% level of significance. 

\begin{figure}[H]
\begin{center} 
\includegraphics[scale=0.5]{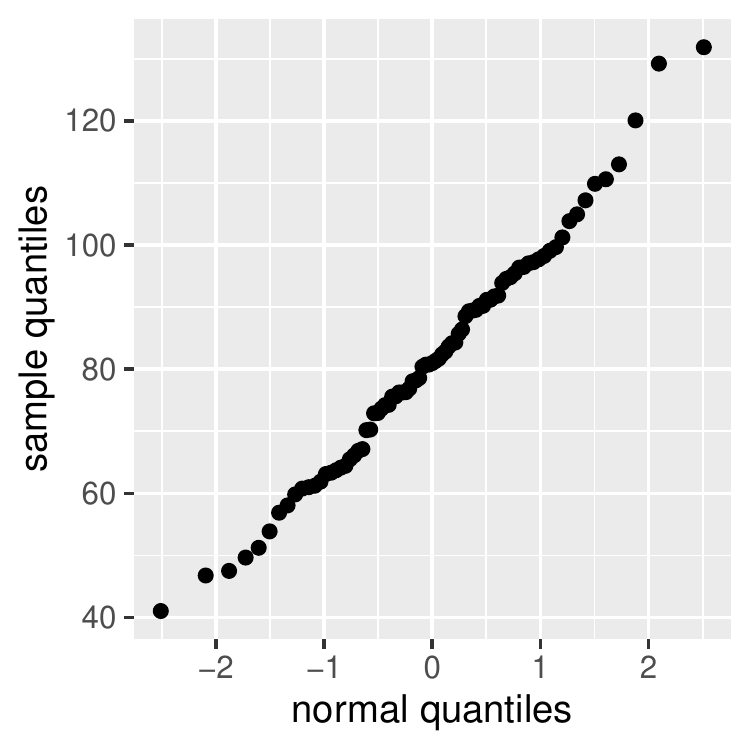}
\caption{
Normal quantile plot of the annual rainfall volume   in the Argentinian   province of Tucum\'{a}n  from 1914 to 1996.
}
\label{fig:ArgentinaRain_qqplot}
\end{center}
\end{figure}

As noted by  \cite{vogel:wendler:2017}, the normal quantile plot (see Figure \ref{fig:ArgentinaRain_qqplot}) supports the assumption of normally distributed data. The Van  der Waerden test is known to be more efficient when the normality assumption is satisfied, yielding an explanation for contradicting outcomes of the two rank-based testing procedures and a justification for choosing the Van der Waerden test over the Wilcoxon test when normally distributed data is considered.

\subsection{FTSE 100 Index}

The second data set corresponds to the closing values of the  Financial Times Stock Exchange 100 Index (FTSE 100), a share index of the 100 companies with the highest market capitalisation  listed on the London Stock Exchange,  recorded daily over a time period of one year from March 2016 to March 2017.

Since in general stock prices do not follow a stationary process, whereas their log-returns  display features of  stationarity,  we analyze the log-returns instead of considering the closing index itself; see Figure \ref{fig:FTSE}. Formally, the log-returns are defined by
\begin{align*}
L_t\defeq\log R_t, \ R_t\defeq\frac{P_t}{P_{t-1}},
\end{align*}
where $P_t$ denotes the value of the index on day $t$.

The plot in  Figure \ref{fig:FTSE} does not indicate a change in the location of the time series, but rather a change in its volatility. For this reason, we  intend to apply the change-point tests to the absolute log-returns,  i.e.,  the absolute values of the log-returns. Again, we base our analysis of the data on the self-normalized Wilcoxon and the self-normalized Van  der Waerden test and compare  their performances to that resulting from the change-point test based on the self-normalized CuSum statistic as defined in 
\cite{shao:2011}.

\begin{figure}[H]
\begin{center} 
\includegraphics[width=\textwidth]{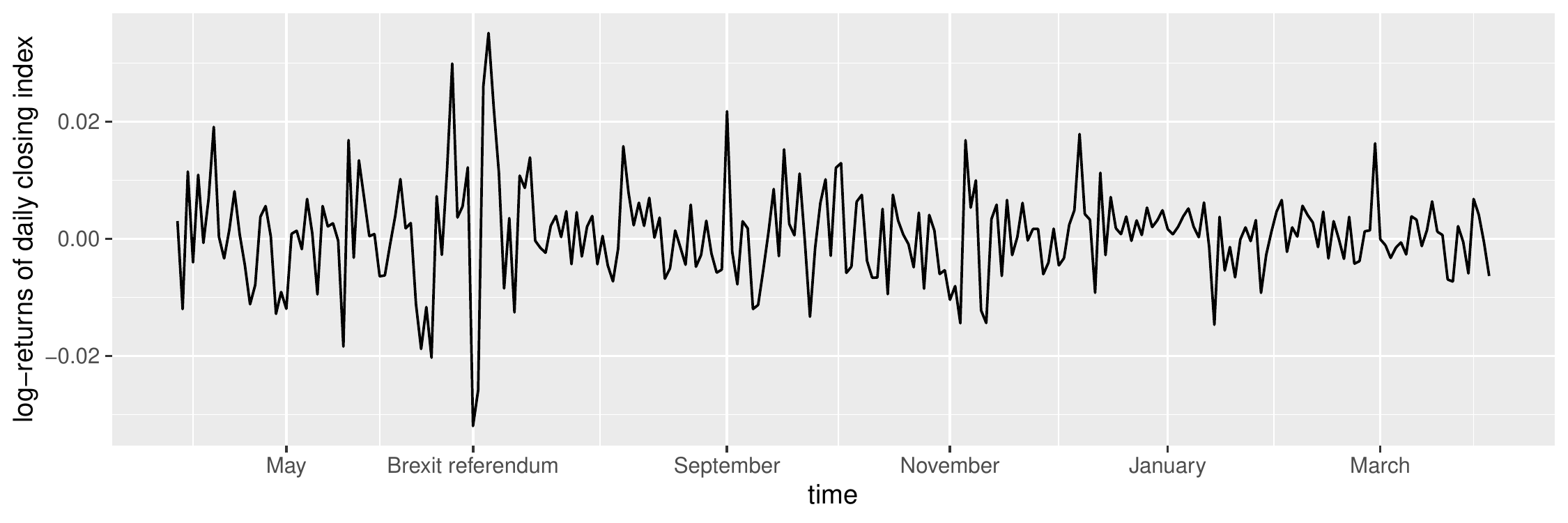}
\caption{
Log-returns of the daily closing index of the  FTSE 100 from March 2016 to March 2017.
}
\label{fig:FTSE}
\end{center}
\end{figure}

Figure \ref{fig:FTSE_two_sample_statistics} depicts the values of the  two-sample statistics $T_{k, n}(a_i)$, $i=1, 2$, as defined in formula \eqref{eq:SN_rank_two_sample}, with $a_1(i)=(n+1)^{-1}i$ (the score function that yields the self-normalized two-sample Wilcoxon statistic) and $a_2(i)=\Phi^{-1}\left((n+1)^{-1}i\right)$ (the score function that yields the self-normalized two-sample Van  der Waerden statistic) and the self-normalized two-sample CuSum statistic as defined in 
\cite{shao:2011}.

\begin{figure}[H]
\begin{center} 
\includegraphics[width=\textwidth]{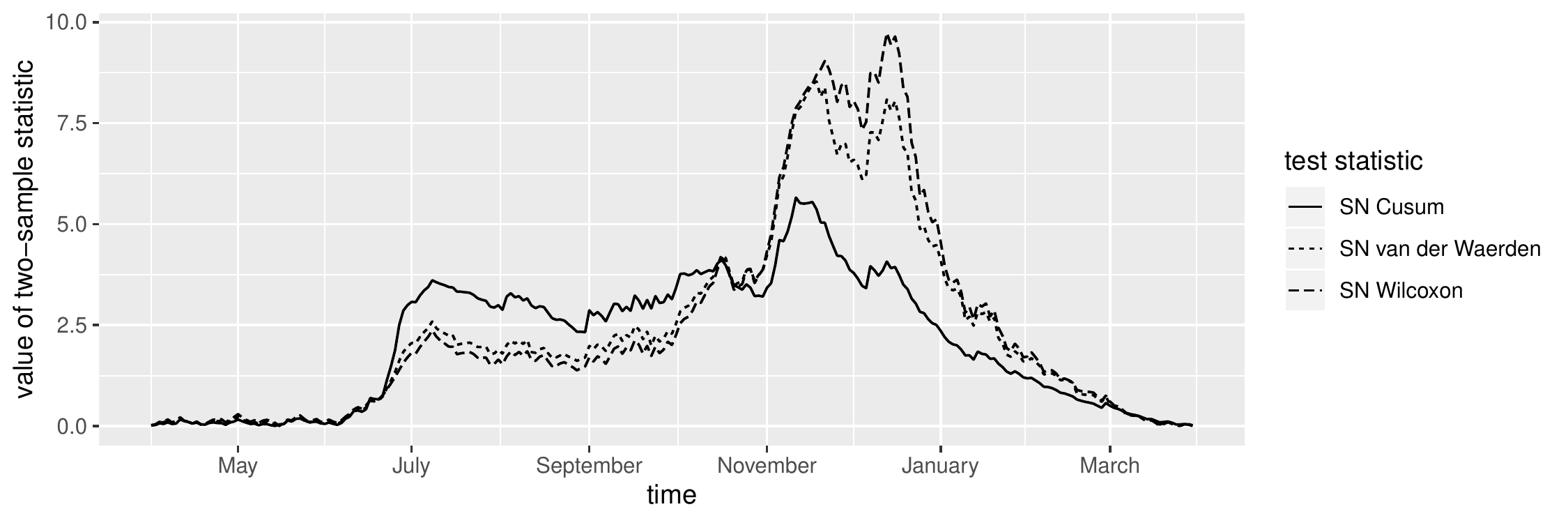}
\caption{
Values of the self-normalized two-sample CuSum, Wilcoxon, and Van der Waerden statistics
 computed on the basis
of the  absolute log-returns of the daily closing index of the FTSE 100 from March 2016 to March 2017.
}
\label{fig:FTSE_two_sample_statistics}
\end{center}
\end{figure}

All three line plots achieve their maximum in the autumn of 2016,
thereby indicating that, if there is a change in the volatility of the time series, it is most likely located around that point in time.
The occurrence  of a structural change in financial time series in the year 2016 seems highly plausible due to the outcome of the  United Kingdom European Union membership referendum on 23 June 2016. An explanation for a decrease of the volatility around November may refer to the Autumn Statement of the same year, a financial report on the state of the economy, published by the British government on 23 November 2016, possibly soothing markets and thereby resulting in a change of the FTSE 100's volatility.

Approximating the distribution of the self-normalized test statistics by the sampling window method with block size $l=\lfloor \sqrt{n}\rfloor =15$ yields $p$-values of $0.0118$ for the self-normalized Wilcoxon test,  $0.0216$ for the self-normalized Van  der Waerden test and $0.2071$ for the self-normalized CuSum test,  i.e.,  at a level of significance of 5\%  the self-normalized Wilcoxon and
 the self-normalized Van  der Waerden  test reject the hypothesis, while  the self-normalized CuSum test decides in favor of the hypothesis of no change.
This seems plausible insofar the normal quantile plot (see Figure \ref{fig:FTSE_qqplot}) does not substantiate  the assumption of a normal distribution, as the tails of the empirical distribution are too heavy. In this case, the more robust Wilcoxon test should be more reliable.

\begin{figure}[H]
\begin{center} 
\includegraphics[scale=0.5]{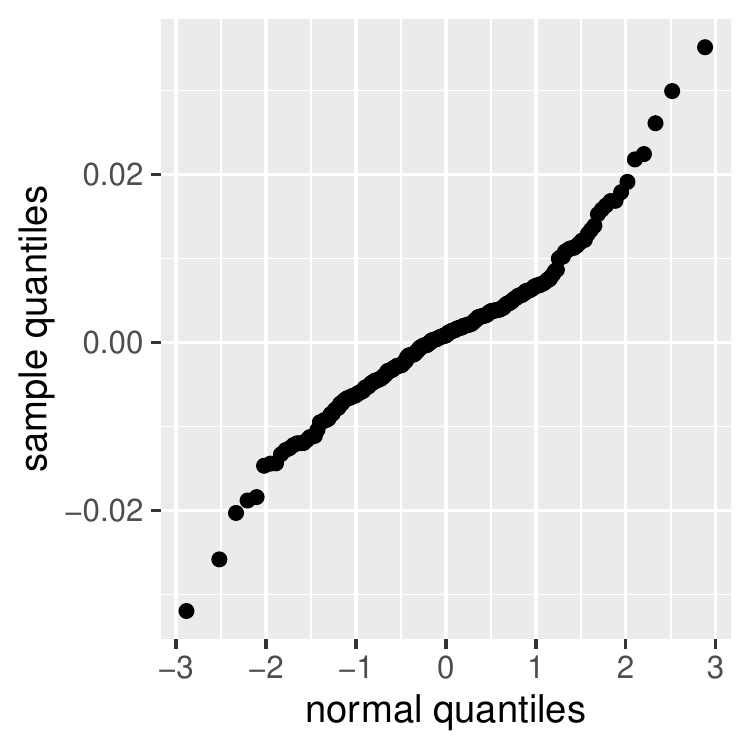}
\caption{
Normal quantile plot
of the  absolute log-returns of the daily closing index of the FTSE 100 from January 2015 to December 2017.
}
\label{fig:FTSE_qqplot}
\end{center}
\end{figure}

\subsection{Ethernet traffic}

The third data set consists of the arrival rate of Ethernet data (bytes per 10 milliseconds) from a local area network (LAN) measured at Bellcore Research and Engineering Center in 1989. 
The data has been taken from  the \lstinline$longmemo$ package in \lstinline$R$. 
For more information on the LAN traffic monitoring see \cite{leland:wilson:1991} and  \cite{beran:1994}.

Figure \ref{ethernet} reveals that the observations are strongly right-skewed. As the Wilcoxon and the Van der Waerden statistics are computed from  ranks, this is not expected to affect tests and estimators that are based on these statistics. Moreover, 
estimation of the Hurst parameter by the local Whittle procedure  with bandwidth parameter $b_n=\lfloor n^{2/3}\rfloor$  yields an estimate $\hat{H} = 0.845$ indicating long-range dependence. This is consistent with the results of  \cite{leland:1994}
 and  \cite{taqqu:teverovsky:1997}.
 
 \begin{figure}[H]
\begin{center} 
\includegraphics[scale=0.5]{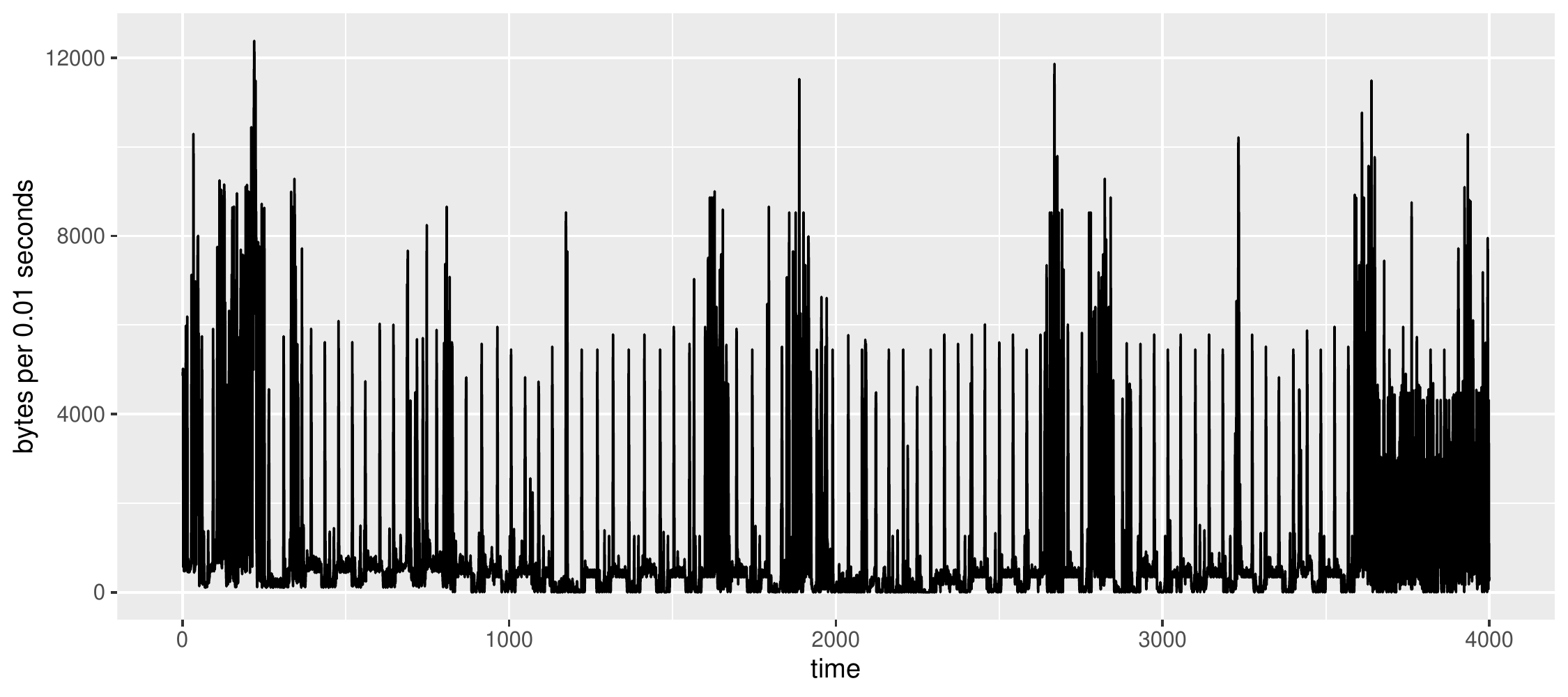}
\caption{Ethernet traffic in bytes per 10 milliseconds from a LAN measured at Bellcore Research Engineering Center.}
\label{ethernet}
\end{center}
\end{figure}
 
Again, we base our analysis of the data on the two self-normalized rank-based  test statistics $T_n(a_i)$, $i=1, 2$,  as defined in formula \eqref{eq:SW_n}, and the self-normalized CuSum statistic  as defined in \cite{shao:2011}. If compared to their asymptotic  critical values, none of the test statistics values can be interpreted  as an indication of  a structural change in the data for any value of $H\in \left(\frac{1}{2}, 1\right)$. Furthermore, approximating the distribution of the self-normalized  test statistics by the sampling window method with block size $l=40$  
yields $p$-values of $0.7159$ for the self-normalized Wilcoxon test,  $ 0.7164$ for the self-normalized Van  der Waerden test and $0.7972$ for the self-normalized CuSum test,  i.e.,  at any common choice of a level of significance all three change-point tests reject the hypothesis. 
In addition, as shown in \cite{betken:wendler:2016}, even when accounting for ties  or multiple changes  in the data, an application of the self-normalized Wilcoxon change-point test does not provide evidence for a change-point in the mean.

Let us compare our findings to  previous analysis by other authors: \cite{coulon:2009} examined this data set in view of change-points under the assumption  that a FARIMA model holds for segments of the data. The number of different sections and the location of potential  change-points are chosen by a model selection criterion. The algorithm proposed by \cite{coulon:2009} detects multiple changes in the parameters of the corresponding FARIMA time series. However, the change-point estimation algorithm proposed in that paper is not robust to skewness or heavy-tailed distributions and decisively relies  on the assumption of FARIMA time series. This seems to contradict observations made by \cite{bhansali:kokoszka:2001} as well as \cite{taqqu:teverovsky:1997} who stress that   the Ethernet traffic data is very unlikely to be generated by FARIMA processes.

All in all, we analyzed  three data sets from different domains of application in change-point analysis. Even though the self-normalized CuSum and Wilcoxon change-point tests have been studied before, our theoretical results  facilitate the consideration of other rank-based statistics. In particular, we find that test decisions that are based on the self-normalized Van  der Waerden test
in some cases concur with those of the self-normalized CuSum change-point test while in others they coincide with 
conclusions drawn from an application of the self-normalized Wilcoxon change-point test.

\begin{appendix}
%
%
\newpage
\section*{Proofs}\label{app:proof}

\subsection*{Proofs under stationarity}

\begin{proof}[Proof of Proposition \ref{prop:red_principle}]

Our goal is to prove a reduction principle for the sequential empirical process $F_{\lfloor nt\rfloor}\left(x\right)-x$, $t\in [0, 1], x\in [0, 1]$, where 
\begin{align*}
F_n\left(x\right)\defeq \frac{1}{n}\sum\limits_{i=1}^n1_{\left\{X_{i}\leq x\right\}} 
\end{align*}
 with respect to the weighted supremum norm.
For this, we consider the transformed random variables $Z_n$, $n\in\mathbb{N}$, with
\begin{align*}
Z_n\defeq\begin{cases}\frac{1}{X_n}\ \ &\text{if}\ X_n\leq \frac{1}{2},\\\frac{1}{X_n-1}\ \ &\text{if}\ X_n> \frac{1}{2}.\end{cases}
\end{align*}
It follows that for $x>2$
\begin{align*}
P\left(Z_n>x\right)=\frac{1}{x} \ \text{ and }
\ P\left(Z_n<-x\right)=\frac{1}{x}.
\end{align*}
Hence, $Z_n$ has  finite $3\lambda$-moment for $\lambda\in\left(0,1/3\right)$. According to Theorem 2 in \cite{buchsteiner:2015}, there exists a constant $\kappa>0$ such that
\begin{multline*}
\sup\limits_{ t\in [0, 1], x\in [-\infty, \infty]}d_{n, r}^{-1}\left(1+|x|\right)^{\lambda}\bigg|\sum_{j=1}^{\lfloor nt\rfloor}\left(1_{\left\{Z_j\leq x\right\}}-F_Z\left(x\right)-\frac{1}{r!}J_{Z,r}\left(x\right)H_r\left(\xi_j\right)\right)\bigg|\\
=O_p\left(n^{-\kappa/3}\right),
\end{multline*}
where $J_{Z,r}\left(x\right)\defeq\E \left(1_{\left\{Z_1\leq x\right\}}H_r\left(\xi_1\right)\right)$ and $F_Z\left(x\right)\defeq P\left(Z_1\leq x
\right)$.  For $x\leq 1/2$, we have $X_n\leq x$ if and only if $Z_n\geq 1/x$,  i.e.,  for $x\leq 1/2$, we have
\begin{align*}
&1_{\left\{X_j\leq x\right\}}-x-\frac{1}{r!}J_{r}\left(x\right)H_r\left(\xi_j\right)\\
=&1_{\left\{Z_j\geq \frac{1}{x}\right\}}-\left(1-F_Z\left(x^{-1}\right)\right)-\frac{1}{r!}\E\left(1_{\left\{Z_1\geq  x^{-1}\right\}}H_r\left(\xi_j\right)\right)H_r\left(\xi_j\right)\\
=&-\left(1_{\left\{Z_j\leq x^{-1}\right\}}-F_Z\left(x^{-1}\right)-\frac{1}{r!}J_{Z,r}\left(x^{-1}\right)\right).
\end{align*}
Using analogous arguments, we arrive at the same estimation  in the case $x>1/2$. 
Moreover, we have
 $x^{-\lambda}\leq \left(1+|1/x|\right)^{\lambda}$.
 As a result, we obtain
\begin{align*}
&\sup\limits_{ t\in [0, 1], x\in [0, 1]}d_{n, r}^{-1}\left(\min\{x,1-x\}\right)^{-\lambda}\Big|\lfloor nt\rfloor \left(F_{\lfloor nt\rfloor}\left(x\right)-x\right)
-\frac{1}{r!}J_r\left(x\right)\sum\limits_{j=1}^{\lfloor nt\rfloor}H_r\left(\xi_j\right)\Big|\\
\leq&\sup\limits_{t\in [0, 1], x\in [-\infty, \infty]}d_{n, r}^{-1}\left(1+|x|\right)^{\lambda}\bigg|\sum_{j=1}^{\lfloor nt\rfloor}\left(1_{\left\{Z_j\leq x\right\}}-F_Z\left(x\right)-\frac{1}{r!}J_{Z,r}\left(x\right)H_r\left(\xi_j\right)\right)\bigg|.
\end{align*}
This completes the proof.
\end{proof}

\begin{proof}[Proof of Theorem \ref{thm:red_principle}]
Our goal is to derive   a reduction principle for the  two-parameter empirical process of the ranks,  i.e., for
\begin{align*}
\hat{G}_{\lfloor nt\rfloor}(x-)-\frac{\lfloor nt\rfloor}{n}\hat{G}_{n}(x-), \ t\in [0, 1], \ x\in [0, 1],
\end{align*}
with $\hat{G}_{k}(x)\defeq\sum_{i=1}^{k}1_{\left\{\frac{1}{n+1}R_i\leq x\right\}}$ and $R_i=\sum_{j=1}^{n}1_{\left\{X_{j}\leq X_{i}\right\}}$.

Recall that  $F_n^{-}$ denotes the generalized inverse of $F_n$. 
It  then follows that
\begin{align*}
\hat{G}_{\lfloor nt\rfloor}(x-)-\frac{\lfloor nt\rfloor}{n}\hat{G}_{n}(x-)
=&\sum\limits_{i=1}^{\lfloor nt\rfloor}\left(1_{\left\{\frac{1}{n+1}R_i\leq x\right\}}-\frac{1}{n}\sum\limits_{i=1}^{n}1_{\left\{\frac{1}{n+1}R_i\leq x\right\}}\right)\\  
=&\sum\limits_{i=1}^{\lfloor nt\rfloor}\left(1_{\left\{F_n(X_i)\leq \frac{n+1}{n}x\right\}}-\frac{1}{n}\sum\limits_{i=1}^{n}1_{\left\{F_n(X_i)\leq \frac{n+1}{n}x\right\}}\right)\\
=&\sum\limits_{i=1}^{\lfloor nt\rfloor}\left(1_{\left\{X_i< F_n^{-}\left(\frac{n+1}{n}x\right)\right\}}-\frac{1}{n}\sum\limits_{i=1}^{n}1_{\left\{X_i< F_n^{-}\left(\frac{n+1}{n}x\right)\right\}}\right).
\end{align*}
Unfortunately, the generalized inverse $F_n^{-}$ is not continuous, which may cause difficulties when considering the weighted supremum norm of the above expression. Therefore,  we will consider  a continuous modification of $F_n^{-}$. For this, let $X_{(1)}\leq X_{ (2)}\leq\ldots, X_{(n)}$ denote  the order statistic of $X_{1},X_{2},\ldots, X_{n}$ and define  a continuous modification of $F_n^{-}$ by $\tilde{F}_n^{-}:[0,1]\rightarrow[0,1]$ by
\begin{align}\label{eq:cont_mod_of_F}
\tilde{F}_n^{-}(x)=\begin{cases}0 \ \ &\text{for}\ x=0\\
X_{(i)} \ \ &\text{for}\ x=\frac{i}{n+1}\\
1 \ \ &\text{for}\ x=1\\
& \text{linear interpolated in between}.
\end{cases}
\end{align}
Because $F_n$ and $F_{[nt]}$ are constant on the intervals $\left[X_{(i)},X_{(i+1)}\right)$, we have
 \begin{align*}
\hat{G}_{\lfloor nt\rfloor}(x-)-\frac{\lfloor nt\rfloor}{n}\hat{G}_{n}(x-)
={\lfloor nt\rfloor} F_{\lfloor nt\rfloor}\big(\tilde{F}_n^{-}(x)-\big)-{\lfloor nt\rfloor}F_n\big(\tilde{F}_n^{-}(x)-\big).
\end{align*}
For the proof of Theorem \ref{thm:red_principle},
we eliminate the expression $\tilde{F}_n^{-}(x)$ on the right-hand side of the equality and then apply Proposition \ref{prop:red_principle}.
For this, we  have to replace $J_r\pr{x}$ by $J_r\pr{\tilde{F}_n^{-}(x)}$,  i.e.,  we have to show that  
\begin{align*}
\sup_{x\in[0,1]}(\min\{x,1-x\})^{-\lambda}\left(J_r\left(\tilde{F}_n^{-}(x)\right)-J_r\left(x\right)\right)=o_P(1).
\end{align*}
Observe that for $x<y$
\begin{align*}
|J_r(x)-J_r(y)|\leq C\sqrt{y-x}
\end{align*}
for some constant $C$.
Therefore, it suffices to show that
\begin{multline*}
\sup_{x\in[0,1]}(\min\{x,1-x\})^{-\lambda}\sqrt{\left|\tilde{F}_n^{-}(x)-x\right|}\\
=\sqrt{\sup_{x\in[0,1]}(\min\{x,1-x\})^{-2\lambda}\left|\tilde{F}_n^{-}(x)-x\right|}=o_P(1).
\end{multline*}
Because $\tilde{F}_n^{-}$ is piecewise linear, we have 
\begin{align*}
\frac{\tilde{F}_n^{-}(x)}{x}&=(n+1)\tilde{F}_n^{-}\Big(\frac{1}{n+1}\Big)\ \ \ &\text{for }x\leq\frac{1}{n+1},\\
\frac{1-\tilde{F}_n^{-}(x)}{1-x}&=(n+1)\left(1-\tilde{F}_n^{-}\Big(\frac{n}{n+1}\Big)\right)\ \ \ &\text{for }x\geq\frac{n}{n+1}.
\end{align*}
The function $\tilde{F}_n^{-}(x)-x$, $x\in[0,1]$, takes its maximum for some $x\in\{\frac{1}{n+1},\frac{2}{n+1},\ldots, \frac{n}{n+1}\}$, as it is linear between these points.  As a result, we may conclude that
\begin{align*}
&\sup_{x\in[0,1]}(\min\{x,1-x\})^{-2\lambda}|\tilde{F}_n^{-}(x)-x|\\
=& \sup_{x\in[1/(n+1),1/2]}x^{-2\lambda}|\tilde{F}_n^{-}(x)-x|+\sup_{x\in[1/2,n/(n+1)]}(1-x)^{-2\lambda}|\tilde{F}_n^{-}(x)-x|\\
\leq& (n+1)^{2\lambda}\sup_{x\in[0, 1]}|\tilde{F}_n^-(x)-x|.
\end{align*}
Letting  $X_{ (1)}\leq X_{(2)}\leq\ldots\leq X_{(n)}$ denote the order statistics of $X_{ 1}, X_{ 2},\ldots, X_{n}$, such that
  $nF_n\left(X_{(i)}\right)=i$ by definition of $X_{(i)}$, it follows that
\begin{align*}
\sup_{x\in[0,1]}\left|\tilde{F}_n^{-}(x)-x\right|
&=\max_{i=1,\ldots,n}\left|\tilde{F}_n^{-}\Big(\frac{i}{n+1}\Big)-\frac{i}{n+1}\right|=\max_{i=1,\ldots,n}\left|X_{(i)}-\frac{i}{n+1}\right|\\
&\leq \max_{i=1,\ldots,n}\left|\frac{i}{n}-X_{(i)}\right|+\frac{1}{n+1}\leq \sup_{t\in[0,1]}|F_n(x)-x|+\frac{1}{n+1}.
\end{align*}
By Proposition \ref{prop:red_principle}, it therefore holds that
\begin{align*}
\sup_{x\in [0, 1]}|\tilde{F}_n(x)-x|=\mathcal{O}_P\left(n^{-1}d_{n,r}\right).
\end{align*}
Hence, we finally arrive at
\begin{align}\label{eq:gliv}
\sup_{x\in[0,1]}(\min\{x,1-x\})^{-2\lambda}\left|\tilde{F}_n^{-}(x)-x\right|=\mathcal{O}_P\left(n^{2\lambda-1}d_{n, r}\right)
=o_P(1).
\end{align}
Since $J_r\pr{\tilde{F}_n^{-}(x)}$ converges in probability to $J_r(x)$ with respect to the weighted supremum norm,  it remains to show that
\begin{multline*}
\sup\limits_{ t\in [0, 1], x\in [0, 1]}d_{n, r}^{-1}(\min\{x,1-x\})^{-\lambda}\bigg|\Big( \hat{G}_{\lfloor nt\rfloor}(x-)-\frac{{\lfloor nt\rfloor}}{n}\hat{G}_n(x-)\Big)\\-\frac{1}{r!}J_r\Big(\tilde{F}_n^{-}(x)\Big)\left(\sum\limits_{i=1}^{\lfloor nt\rfloor}H_r\left(\xi_i\right)-\frac{\lfloor nt\rfloor}{n}\sum\limits_{i=1}^nH_r\left(\xi_i\right)\right)\bigg|=o_P(1).
\end{multline*}
For this, note that
\begin{align*}
&\sup\limits_{ t\in [0, 1], x\in [0, 1]}d_{n, r}^{-1}(\min\{x,1-x\})^{-\lambda}\bigg|\Big( \hat{G}_{\lfloor nt\rfloor}(x-)-\frac{{\lfloor nt\rfloor}}{n}\hat{G}_n(x-)\Big)\\
& \qquad \qquad \qquad \qquad \qquad \qquad \qquad \quad-\frac{1}{r!}J_r\Big(\tilde{F}_n^{-}(x)\Big)\bigg(\sum\limits_{i=1}^{\lfloor nt\rfloor}H_r\left(\xi_i\right)-\frac{\lfloor nt\rfloor}{n}\sum\limits_{i=1}^nH_r\left(\xi_i\right)\bigg)\bigg|\\
\leq &\sup_{x\in[0,1]}\frac{(\min\{\tilde{F}_n^{-}(x),1-\tilde{F}_n^{-}(x)\})^{\lambda}}{(\min\{x,1-x\})^{\lambda}}\\
&\times \sup\limits_{ t\in [0, 1], x\in [0, 1]}d_{n, r}^{-1}(\min\{x,1-x\})^{-\lambda}\bigg|\pr{ {\lfloor nt\rfloor} F_{\lfloor nt\rfloor}(x-)-{\lfloor nt\rfloor}F_n(x-)}\\
& \qquad \qquad \qquad \qquad \qquad \qquad \qquad \qquad-\frac{1}{r!}J_r(x)\bigg(\sum\limits_{i=1}^{\lfloor nt\rfloor}H_r\left(\xi_i\right)-\frac{\lfloor nt\rfloor}{n}\sum\limits_{i=1}^nH_r\left(\xi_i\right)\bigg)\bigg|.
\end{align*}
We will treat the two factors on the right-hand side of the above formula separately. For the first factor, we have
\begin{align*}
&\sup\limits_{x \in [0, 1]}\left|(\min\{x,1-x\})^{-\lambda}(\min\{\tilde{F}_n^{-}(x),1-\tilde{F}_n^{-}(x)\})^{\lambda}\right|\\
\leq&
\sup\limits_{x \in [0, \frac{1}{2}]}\bigg|\frac{\tilde{F}_n^{-}(x)}{x}\bigg|^{\lambda}+
\sup\limits_{x \in [ \frac{1}{2}, 1]}\bigg|\frac{1-\tilde{F}_n^{-}(x)}{1-x}\bigg|^{\lambda}\\
\leq&
\sup\limits_{x \in [0, \frac{1}{2}]}\bigg|\frac{\tilde{F}_n^{-}(x)-x}{x}\bigg|^{\lambda}+
\sup\limits_{x \in [ \frac{1}{2}, 1]}\bigg|\frac{\tilde{F}_n^{-}(x)-x}{1-x}\bigg|^{\lambda}+2.
\end{align*}
Because $\tilde{F}_n^{-}$ is piecewise linear, it holds that
\begin{align*}
\frac{\tilde{F}_n^{-}(x)}{x}&=(n+1)\tilde{F}_n^{-}\Big(\frac{1}{n+1}\Big)\ \ \ &\text{for }x\leq\frac{1}{n+1},\\
\frac{1-\tilde{F}_n^{-}(x)}{1-x}&=(n+1)\left(1-\tilde{F}_n^{-}\Big(\frac{n}{n+1}\Big)\right)\ \ \ &\text{for }x\geq\frac{n}{n+1},
\end{align*}
such that
\begin{multline*}
\sup\limits_{x \in [0, 1]}\left|(\min\{x,1-x\})^{-\lambda}(\min\{\tilde{F}_n^{-}(x),1-\tilde{F}_n^{-}(x)\})^{\lambda}\right|\\
\leq 2(n+1)^{\lambda}\left(\sup\limits_{x\in [0, 1]}\left|\tilde{F}_n^{-}(x)-x\right|\right)^{\lambda}+2.
\end{multline*}
By  \eqref{eq:gliv}, it therefore follows that 
\begin{align*}
\sup\limits_{x \in [0, 1]}\pr{\min\{x,1-x\}}^{-\lambda}(\min\{\tilde{F}_n^{-}(x),1-\tilde{F}_n^{-}(x)\})^{\lambda}=\mathcal{O}_P\pr{d_{n,r}^{\lambda}}.
\end{align*}
So as to determine the order of the second factor, we split it into two summands:
\begin{align*}
\sup\limits_{ t\in [0, 1], x\in [0, 1]}d_{n, r}^{-1}(\min\{x,1-x\})^{-\lambda}\bigg|&\left( {\lfloor nt\rfloor} F_{\lfloor nt\rfloor}(x-)-{\lfloor nt\rfloor}F_n(x-)\right)\\
&-\frac{J_r(x)}{r!}\bigg(\sum\limits_{i=1}^{\lfloor nt\rfloor}H_r\left(\xi_i\right)-\frac{\lfloor nt\rfloor}{n}\sum\limits_{i=1}^nH_r\big(\xi_i\big)\bigg)\bigg|\\
\leq\sup\limits_{ t\in [0, 1], x\in [0, 1]}d_{n, r}^{-1}(\min\{x,1-x\})^{-\lambda}\bigg|&\left( {\lfloor nt\rfloor} F_{\lfloor nt\rfloor}(x-)-F(x-)\right)-\frac{J_r(x)}{r!}\sum\limits_{i=1}^{\lfloor nt\rfloor}H_r\left(\xi_i\right)\bigg|\\
+\sup\limits_{ t\in [0, 1], x\in [0, 1]}d_{n, r}^{-1}(\min\{x,1-x\})^{-\lambda}&\frac{\lfloor nt\rfloor}{n}\bigg|\left( nF_{n}(x-)-F(x-)\right)-\frac{J_r(x)}{r!}\sum\limits_{i=1}^{n}H_r\left(\xi_i\right)\bigg|.\shoveright\\
\end{align*}
The second summand is  smaller than the first summand. Therefore,  it suffices to deal with the first summand. Due to continuity of $J_r$ and $F$, we have
\begin{align*}
&\sup\limits_{ t\in [0, 1], x\in [0, 1]}d_{n, r}^{-1}(\min\{x,1-x\})^{-\lambda}\bigg|\left( {\lfloor nt\rfloor} F_{\lfloor nt\rfloor}(x-)-F(x-)\right)-\frac{1}{r!}J_r(x)\sum\limits_{i=1}^{\lfloor nt\rfloor}H_r\left(\xi_i\right)\bigg|\\
=&\sup\limits_{ t\in [0, 1], x\in [0, 1]}d_{n, r}^{-1}(\min\{x,1-x\})^{-\lambda}\bigg|\left( {\lfloor nt\rfloor} F_{\lfloor nt\rfloor}(x)-F(x)\right)-\frac{1}{r!}J_r(x)\sum\limits_{i=1}^{\lfloor nt\rfloor}H_r\left(\xi_i\right)\bigg|.
\end{align*}
The  right-hand side of the above  equation is $\mathcal{O}_P(n^{-\vartheta})$ due to Proposition \ref{prop:red_principle}.

All in all, we arrive at
\begin{multline*}
\sup_{x\in[0,1]}\frac{(\min\{\tilde{F}_n^{-}(x),1-\tilde{F}_n^{-}(x)\})^{\lambda}}{(\min\{x,1-x\})^{\lambda}}\\
\times \sup\limits_{ t\in [0, 1], x\in [0, 1]}d_{n, r}^{-1}(\min\{x,1-x\})^{-\lambda}\bigg|\left( {\lfloor nt\rfloor} F_{\lfloor nt\rfloor}(x-)-{\lfloor nt\rfloor}F_n(x-)\right)\\
-\frac{J_r(x)}{r!}\bigg(\sum\limits_{i=1}^{\lfloor nt\rfloor}H_r\left(\xi_i\right)-\frac{\lfloor nt\rfloor}{n}\sum\limits_{i=1}^{n}H_r\left(\xi_i\right)\bigg)\bigg|\\
=\mathcal{O}_P\left(d_{n,r}^{\lambda}\big(n^{-\vartheta}\big)+d_{n,r}^{-1}n^{\lambda}\right)=o_P(1),
\end{multline*}
since by assumption  $n^{\lambda}=o\pr{d_{n,r}^{1-\lambda}}$, $n^{\lambda}d_{n,r}=o\pr{n}$ and $d_{n,r}^{\lambda}=o\pr{n^{\vartheta}}$ for any $\lambda<1/3$.
This completes the proof of Theorem \ref{thm:red_principle}.
\end{proof}

\subsection*{Proofs under local alternatives}

Recall that, under the alternative of a change in the mean, the observations are generated by a triangular array $X_{n, i}$, $1\leq i\leq n$, $n\in\mathbb{N}$, defined by
\begin{align*}
X_{n, i}=\begin{cases}
Y_i & \text{if $i\leq \lfloor n\tau\rfloor$}, \\
Y_i+h_n & \text{if $i> \lfloor n\tau\rfloor$},
\end{cases}
\end{align*}
where $0<\tau<1$,  $h_n$, $n\in\mathbb{N}$, is a non-negative deterministic sequence and
$Y_n=G(\xi_n)$, $n\in \mathbb{N}$,  is a subordinated Gaussian sequence.

Let $F$ denote the marginal distribution function of $Y_n$, $n\in \mathbb{N}$,  and let $Y_{ (1)}, Y_{(2)},\ldots, Y_{ (n)}$ denote the order statistics of $Y_{1}, Y_{2},\ldots, Y_{n}$.

Consider the following (stochastic) transformation:
\begin{align*}
H_n(x)
\defeq &
\begin{cases}
F(x) \ & \text{if $x< Y_{(n)}-h_n$},\\
F(x-h_n) \ & \text{if $x>Y_{(n)}+h_n$},\\
&\text{linear interpolated in between}.
\end{cases}
\intertext{Its inverse is given by}
H_n^{-}(x)= &
\begin{cases}
F^{-}(x) \ &\text{if $x< F\left(Y_{(n)}-h_n\right)$},\\
F^{-}(x)+h_n \ &\text{if $x> F\left(Y_{(n)}\right)$},\\
&\text{linear interpolated in between}.
\end{cases}
\end{align*}
Let  $Y_{n, i}\defeq H_n\pr{X_{n, i}}$, $1\leq i\leq n$, $n\in\mathbb{N}$, denote the  transformed
observations and 
note that $H_n$ is a strictly monotone function. As a consequence,  we have
\begin{align*}
R_i=\sum_{j=1}^{n}1_{\left\{X_{n, j}\leq X_{n, i}\right\}}=\sum_{j=1}^{n}1_{\left\{Y_{n, j}\leq Y_{n, i}\right\}},
\end{align*}
 i.e.,  the rank statistics are not affected by  the transformation $H_n$. Instead of considering the triangular array $X_{n, i}$, $1\leq i\leq n$, $n\in\mathbb{N}$, we may therefore as well consider the transformed observations $Y_{n, i}$, $1\leq i\leq n$, $n\in\mathbb{N}$.

In the following,  $F_{k, l}$ refers to the empirical distribution function  of $Y_{n, k}, \ldots, Y_{n, l}$,  i.e., 
\begin{align*}
F_{k, l}(x)\defeq \frac{1}{l-k+1}\sum\limits_{i=k}^l 1_{\left\{Y_{n, i}\leq x\right\}}.
\end{align*}
For notational convenience, we write $F_l$ instead of $F_{1, l}$.

\begin{proof}[Proof of Proposition \ref{prop:local_alt}]

Our goal is to prove a reduction principle for the sequential empirical process $F_{\lfloor nt\rfloor}\left(x\right)-x$, $t\in [0, 1], x\in [0, 1]$, where 
\begin{align*}
F_n\left(x\right)\defeq \frac{1}{n}\sum\limits_{i=1}^n1_{\left\{Y_{n, i}\leq x\right\}}, \ Y_{n, i}\defeq H_n\left(X_{n, i}\right),
\end{align*}
 with respect to the weighted supremum norm.
 
For this, we split the considered expression in \eqref{eq:emp_process_loc_alt} into two parts:
\begin{align*}
&\sup\limits_{ t\in [0, 1], x\in [0, 1]}(\min\{x,1-x\})^{-\lambda}\biggl|d_{n, r}^{-1}{\lfloor nt\rfloor} \left(F_{\lfloor nt\rfloor}\big(x\big)-x\right)\\
&\quad-\frac{1}{r!}J_r(F^{-}(x))d_{n, r}^{-1}\sum\limits_{i=1}^{\lfloor nt\rfloor}H_r(\xi_i)
+1_{\{t>\tau\}}\frac{\lfloor nt\rfloor-\lfloor n\tau\rfloor}{d_{n, r}}\left(x-F\left(F^{-}(x)-h_n\right)\right)\biggr|\\
\leq&\sup\limits_{t \in [0, \tau], x \in [0,1]}(\min\{x,1-x\})^{-\lambda}\biggl|d_{n, r}^{-1}{\lfloor nt\rfloor} \left( F_{\lfloor nt\rfloor}\left(x\right)-x\right)\\
&\quad -\frac{1}{r!}J_r(F^{-}(x))d_{n, r}^{-1}\sum\limits_{i=1}^{\lfloor nt\rfloor}H_r(\xi_i)\biggr|\\
 +&\sup\limits_{t \in [\tau, 1], x \in [0,1]}(\min\{x,1-x\})^{-\lambda}\biggl|d_{n, r}^{-1}{\lfloor nt\rfloor} \left(F_{\lfloor nt\rfloor}\left(x\right)-x\right)\\
&\quad -\frac{1}{r!}J_r(F^{-}(x))d_{n, r}^{-1}\sum\limits_{i=1}^{\lfloor nt\rfloor}H_r(\xi_i)+\frac{\lfloor nt\rfloor-\lfloor n\tau\rfloor}{d_{n, r}}\left(x-F\left(F^{-}(x)-h_n\right)\right)\biggr|.
\end{align*}
Convergence of the first summand follows directly from Proposition \ref{prop:red_principle}.
Therefore, it remains to show that the second summand is $\mathcal{O}_P\pr{h_n^{\rho}}$.

Noting that
$\lfloor nt\rfloor F_{\lfloor nt\rfloor}(x)=\lfloor n\tau\rfloor F_{\lfloor n\tau\rfloor}(x)+\left(\lfloor nt\rfloor-\lfloor n\tau\rfloor\right)F_{\lfloor n\tau\rfloor+1, \lfloor nt\rfloor}(x)$, we split the summand as follows:
\begin{align*}
&\sup\limits_{t\in[\tau, 1], x\in [0, 1]}(\min\{x,1-x\})^{-\lambda}\Big|d_{n, r}^{-1} {\lfloor nt\rfloor}\left( F_{\lfloor nt\rfloor}\left(x\right)-x\right)\\
&-\frac{1}{r!}J_r(F^-(x))d_{n, r}^{-1}\sum\limits_{i=1}^{\lfloor nt\rfloor}H_r(\xi_i)
+\left(\frac{\lfloor nt\rfloor}{n}-\frac{\lfloor n\tau\rfloor}{n}\right)d_{n, r}^{-1}n\left(x-F\left(F^{-}(x)-h_n\right)\right)\Big|\\
=&\sup\limits_{x\in [0, 1]}(\min\{x,1-x\})^{-\lambda}\Big|d_{n, r}^{-1} {\lfloor n\tau\rfloor}\left( F_{\lfloor n\tau\rfloor}\left(x\right)-x\right)-\frac{1}{r!}J_r(F^-(x))d_{n, r}^{-1}\sum\limits_{i=1}^{\lfloor n\tau \rfloor}H_r(\xi_i)\Big|\\
&+\sup\limits_{t\in[\tau, 1], x\in [0, 1]}(\min\{x,1-x\})^{-\lambda}\big|d_{n, r}^{-1} (\lfloor nt\rfloor-{\lfloor n\tau\rfloor}) \left(F_{\lfloor n\tau\rfloor+1, \lfloor nt\rfloor}(x)-x\right)\\
&-\frac{1}{r!}J_r(F^-(x))d_{n, r}^{-1}\sum\limits_{i=\lfloor n\tau\rfloor+1}^{\lfloor nt\rfloor}H_r(\xi_i)+\left(\frac{\lfloor nt\rfloor}{n}-\frac{\lfloor n\tau\rfloor}{n}\right)d_{n, r}^{-1}n\left(x-F\left(F^{-}(x)-h_n\right)\right)
\big|.
\end{align*}
The first summand  on the right-hand side of the above inequality is of order $\mathcal{O}_P\pr{h_n^{\rho}}$ according to Proposition \ref{prop:red_principle}. Therefore, we restrict our considerations to the second summand, which can be written as
\begin{align*}
&\sup\limits_{t\in[\tau, 1], x\in \left[0,  1\right]}(\min\{x,1-x\})^{-\lambda}\big|d_{n, r}^{-1} \left(\lfloor nt\rfloor-{\lfloor n\tau\rfloor}\right) \left(F_{\lfloor n\tau\rfloor+1, \lfloor nt\rfloor}(x)-x\right)\\
&-\frac{J_r(F^{-}(x))}{r!}d_{n, r}^{-1}\sum\limits_{i=\lfloor n\tau\rfloor+1}^{\lfloor nt\rfloor}H_r(\xi_i)+\left(\frac{\lfloor nt\rfloor}{n}-\frac{\lfloor n\tau\rfloor}{n}\right)d_{n, r}^{-1}n\left(x-F\left(F^{-}(x)-h_n\right)\right)
\big|\\
=&\sup\limits_{t\in[\tau, 1], x\in\left[0,  1\right]}(\min\{x,1-x\})^{-\lambda}\big|d_{n, r}^{-1}\sum\limits_{i=\lfloor n\tau\rfloor +1}^{\lfloor nt\rfloor}\left(1_{\left\{Y_i\leq H_n^{-}(x)-h_n\right\}}-x\right)\\
&-\frac{J_r(F^{-}(x))}{r!}d_{n, r}^{-1}\sum\limits_{i=\lfloor n\tau\rfloor+1}^{\lfloor nt\rfloor}H_r(\xi_i)+\left(\frac{\lfloor nt\rfloor}{n}-\frac{\lfloor n\tau\rfloor}{n}\right)d_{n, r}^{-1}n\left(x-F\left(F^{-}(x)-h_n\right)\right)
\big|.
\end{align*}
Repeated application of the triangular inequality yields
\begin{align}
&\sup\limits_{t\in[\tau, 1], x\in\left[0,  1\right]}(\min\{x,1-x\})^{-\lambda}\big|d_{n, r}^{-1}\sum\limits_{i=\lfloor n\tau\rfloor +1}^{\lfloor nt\rfloor}\left(1_{\left\{Y_i\leq H_n^{-}(x)-h_n\right\}}-x\right)\nonumber\\
&-\frac{J_r(F^{-}(x))}{r!}d_{n, r}^{-1}\sum\limits_{i=\lfloor n\tau\rfloor+1}^{\lfloor nt\rfloor}H_r(\xi_i)+\left(\frac{\lfloor nt\rfloor}{n}-\frac{\lfloor n\tau\rfloor}{n}\right)d_{n, r}^{-1}n\left(x-F\left(F^{-}(x)-h_n\right)\right)
\big|\nonumber\\
\leq &\sup\limits_{t\in[\tau, 1], x\in\left[0,  1\right]}(\min\{x,1-x\})^{-\lambda}\big|d_{n, r}^{-1}\sum\limits_{i=\lfloor n\tau\rfloor +1}^{\lfloor nt\rfloor}\left(1_{\left\{Y_i\leq H_n^{-}(x)-h_n\right\}}-F\left(H_n^{-}(x)-h_n\right)\right)\nonumber\\
&-\frac{1}{r!}J_r(H_n^{-}(x)-h_n)d_{n, r}^{-1}\sum\limits_{i=\lfloor n\tau\rfloor+1}^{\lfloor nt\rfloor}H_r(\xi_i)
\big|\label{summandA}\\
&+\sup\limits_{t\in[\tau, 1], x\in\left[0,  1\right]}(\min\{x,1-x\})^{-\lambda}\big|d_{n, r}^{-1}\left(\lfloor nt\rfloor-\lfloor n\tau\rfloor \right)\left(F\left(H_n^{-}(x)-h_n\right)-x\right)\nonumber\\
&+\left(\frac{\lfloor nt\rfloor}{n}-\frac{\lfloor n\tau\rfloor}{n}\right)d_{n, r}^{-1}n\left(x-F\left(F^{-}(x)-h_n\right)\right)\big|\label{summandB}\\
&+\frac{1}{r!}\sup\limits_{t\in[\tau, 1], x\in\left[0,  1\right]}(\min\{x,1-x\})^{-\lambda}\left|J_r(H_n^{-}(x)-h_n)-J_r(F^{-}(x))\right|\nonumber
\\
& \times\sup\limits_{t\in[\tau, 1]}\bigg|d_{n, r}^{-1}\sum\limits_{i=\lfloor n\tau\rfloor+1}^{\lfloor nt\rfloor}H_r(\xi_i)\bigg|.\notag
\end{align}
For the first summand \eqref{summandA} on the right-hand side of the above inequality, we have
\begin{align*}
&\sup\limits_{\substack{t\in[\tau, 1],\\ x\in\left[0, 1\right]}}(\min\{x,1-x\})^{-\lambda}\big|d_{n, r}^{-1}\sum\limits_{i=\lfloor n\tau\rfloor +1}^{\lfloor nt\rfloor}\left(1_{\left\{Y_i\leq H_n^{-}(x)-h_n\right\}}-F\left(H_n^{-}(x)-h_n\right)\right)\\
&\qquad\qquad\qquad\qquad\qquad\quad-\frac{1}{r!}J_r(H_n^{-}(x)-h_n)d_{n, r}^{-1}\sum\limits_{i=\lfloor n\tau\rfloor+1}^{\lfloor nt\rfloor}H_r(\xi_i)
\big|\\
=&\sup\limits_{\substack{t\in[\tau, 1],\\ x\in\left[0, 1\right]}}(\min\{x,1-x\})^{-\lambda}\big|d_{n, r}^{-1}\sum\limits_{i=\lfloor n\tau\rfloor +1}^{\lfloor nt\rfloor}\left(1_{\left\{F(Y_i)\leq F(H_n^{-}(x)-h_n)\right\}}-F\left(H_n^{-}(x)-h_n\right)\right)\\
&\qquad\qquad\qquad\qquad\qquad\quad-\frac{1}{r!}J_r(H_n^{-}(x)-h_n)d_{n, r}^{-1}\sum\limits_{i=\lfloor n\tau\rfloor+1}^{\lfloor nt\rfloor}H_r(\xi_i)
\big|\\
\leq
&\sup\limits_{x\in\left[0, 1\right]}\left(\frac{\min\{ F(H_n^{-}(x)-h_n),1- F(H_n^{-}(x)-h_n)\}}{\min\{x,1-x\}}\right)^{\lambda}\\
&\sup\limits_{\substack{t\in[\tau, 1],\\ x\in\left[0, 1\right]}}(\min\{x,1-x\})^{-\lambda}d_{n, r}^{-1}\bigg|\sum\limits_{i=\lfloor n\tau\rfloor +1}^{\lfloor nt\rfloor}\!\!\left(1_{\left\{F(Y_i)\leq x\right\}}-x\right)
-\frac{J_r(F^{-}(x))}{r!}\sum\limits_{i=\lfloor n\tau\rfloor+1}^{\lfloor nt\rfloor}\!\!H_r(\xi_i)
\bigg|.
\end{align*}
 The second factor on the right-hand side of the above inequality is $\mathcal{O}_P(n^{-\vartheta})$ according to Proposition~\ref{prop:red_principle}. 
  
For the second summand \eqref{summandB}, we have
\begin{align*}
&\sup\limits_{t\in[\tau, 1],x\in\left[0, 1\right]}(\min\{x,1-x\})^{-\lambda}\big|d_{n, r}^{-1}\left(\lfloor nt\rfloor-\lfloor n\tau\rfloor \right)\left(F\left(H_n^{-}(x)-h_n\right)-x\right)\\
&\qquad\qquad\qquad\qquad\qquad\qquad\quad+\left(\frac{\lfloor nt\rfloor}{n}-\frac{\lfloor n\tau\rfloor}{n}\right)d_{n, r}^{-1}n\left(x-F\left(F^{-}(x)-h_n\right)\right)\big|\\
\leq
&\sup\limits_{x\in\left[0, 1\right]}(\min\{x,1-x\})^{-\lambda}\frac{n}{d_{n, r}}\left|\left(F\left(H_n^{-}(x)-h_n\right)-x\right)-\left(F\left(F^{-}(x)-h_n\right)-x\right)\right|.
\end{align*}
All in all, it therefore remains to show that
\begin{align}\label{assertion_1}
\sup\limits_{x\in\left[0,1\right]}\left(\frac{\min\{ F(H_n^{-}(x)-h_n),1- F(H_n^{-}(x)-h_n)\}}{\min\{x,1-x\}}\right)^{\lambda}=\mathcal{O}_P(1),
\end{align}
\begin{align}\label{assertion_2}
\sup\limits_{x\in\left[0, 1\right]}(\min\{x,1-x\})^{-\lambda}\frac{n}{d_{n, r}}\left|F\left(F^{-}(x)-h_n\right)-F\left(H_n^{-}(x)-h_n\right)\right|=\mathcal{O}_P\left(h_n^{\min\{\rho,\lambda\}}\right),
\end{align}
and
\begin{align}\label{assertion_3}
\sup\limits_{x\in\left[0, 1\right]}(\min\{x,1-x\})^{-\lambda}\left|J_r(H_n^{-}(x)-h_n)-J_r(F^{-}(x))\right|=\mathcal{O}_P\left(h_n^{\min\{\rho,\lambda\}}\right).
\end{align}

In order to show \eqref{assertion_1}, note that
\begin{align*}
&\sup\limits_{x \in [0,1]}
\left(\frac{\min\{ F\left(H_n^{-}(x)-h_n\right),1- F\left(H_n^{-}(x)-h_n\right)\}}{\min\{x,1-x\}}\right)^{\lambda}\\
\leq&\sup\limits_{x\in [0,1]}
\left(\frac{F(H_n^{-}\left(x\right)-h_n)}{x}\right)^{\lambda}+\sup\limits_{x \in [0,1]}\left(\frac{1-F\left(H_n^{-}\left(x\right)-h_n\right)}{1-x}\right)^{\lambda}\\
\leq&\sup\limits_{x \in[0,1]}
\left(\frac{F(F^{-}\left(x\right))}{x}\right)^{\lambda}+\sup\limits_{x \in [0,1]}
\left(\frac{1-F\left(H_n^{-}\left(x\right)-h_n\right)}{1-x}\right)^{\lambda}\\
=&1+\sup\limits_{x\in [0,1]}
\left(\frac{1-F\left(H_n^{-}\left(x\right)-h_n\right)}{1-x}\right)^{\lambda}.
\end{align*}
For the second summand on the right-hand side, we have
\begin{align*}
\sup\limits_{x\in [F(Y_{n}),1]}
\left(\frac{1-F\left(H_n^{-}\left(x\right)-h_n\right)}{1-x}\right)^{\lambda}=1.
\end{align*}
Moreover, using the fact that $H_n^{-}(x)\geq F^{-}(x)$, it follows that
\begin{align*}
\sup\limits_{x\in \left[0, F(Y_{n})\right]}
\left(\frac{1-F\left(H_n^{-}\left(x\right)-h_n\right)}{1-x}\right)^{\lambda}
\leq &\sup\limits_{x\in \left[0, F(Y_{n})\right]}
\left(\frac{1-F\left(F^{-}\left(x\right)-h_n\right)}{1-x}\right)^{\lambda}\\
=&1+\sup\limits_{x\in \left[0, F(Y_{n})\right]}
\left(\frac{x-F\left(F^{-}\left(x\right)-h_n\right)}{1-x}\right)^{\lambda}.
\end{align*}
Note that, since $\sup_{x\in [0, F(Y_{(n)})]}(1-x)^{2\lambda-1}=\pr{1-F(Y_{(n)})}^{2\lambda-1}=\mathcal{O}_P(n^{2\lambda-1})$,
\begin{align*}
&\sup\limits_{x\in [0, F(Y_{n})]}
\left(\frac{x-F\left(F^{-}\left(x\right)-h_n\right)}{1-x}\right)^{\lambda}\\
\leq &h_nn^{\lambda(1-2\lambda)}\left[\sup\limits_{x\in [0,1]}\left(\min\{x,1-x\}\right)^{-2\lambda}h_n^{-1}(x-F\left(F^{-}\left(x\right)-h_n)\right)\right]^{\lambda}\\
=&\mathcal{O}\pr{h_nn^{\lambda(1-2\lambda)}}.
\end{align*}
The right-hand side of the above inequality is $\mathcal{O}(1)$ due to the fact that $n^{\lambda+\rho-1}=\mathcal{O}\pr{d_{n, r}^{\rho-1}}$ by assumption.
All in all,  \eqref{assertion_1} follows.

In order to show \eqref{assertion_3}, recall that for $x<y$
\begin{align*}
|J_r(x)-J_r(y)|\leq C\sqrt{F(y)-F(x)}.
\end{align*}
As a result, we have
\begin{multline*}
\sup\limits_{x\in\left[ 0,1\right]}(\min\{x,1-x\})^{-\lambda}\left|J_r(H_n^{-}(x)-h_n)-J_r(F^{-}(x))\right|\\
\leq\sqrt{\sup\limits_{x \in \left[ 0,1\right]}(\min\{x,1-x\})^{-2\lambda}\left(x-F\left(H_n^{-}(x)-h_n\right)\right)}.
\end{multline*}
Note that
\begin{align*}
&\sup\limits_{x \in \left[0,1\right]}(\min\{x,1-x\})^{-2\lambda}\left(x-F\left(H_n^{-}(x)-h_n\right)\right)\\
\leq  h_n&\sup\limits_{x \in \left[0,1\right]}(\min\{x,1-x\})^{-2\lambda}d_{n, r}^{-1}n\biggl(x-F\left(F^{-}(x)-h_n\right)\biggr)\\
\leq  h_n&\sup\limits_{x \in \left[0,1\right]}(\min\{x,1-x\})^{-2\lambda}\left|d_{n, r}^{-1}n\left(x-F\left(F^{-}(x)-h_n\right)\right)-f(F^{-}(x))\right|\\
 +h_n&\sup\limits_{x \in \left[ 0,1\right]}(\min\{x,1-x\})^{-2\lambda}f(F^{-} (x)).
\end{align*}

Due to  Assumptions \eqref{ass:difaprox} and \eqref{ass:difuniform},
the right-hand side of the above inequality is $\mathcal{O}_P\pr{h_n}$ and \eqref{assertion_3} follows, so that it remains to show \eqref{assertion_2}. For this,  it is enough to consider the interval $\left[F(Y_{(n)}-h_n),1\right]$. To see this, note that $H_n^{-}(x)=F^{-}(x)$ for $x\leq F\left(Y_{(n)}-h_n\right)$, so that
\begin{align*}
\sup\limits_{x\in\left[0,F(Y_{(n)}-h_n)\right]}(\min\{x,1-x\})^{-\lambda}\left|d_{n, r}^{-1}n\left(F\left(F^{-}(x)-h_n\right)-F\left(H_n^{-}(x)-h_n\right)\right)\right|=0.
\end{align*}
On the other hand, we have $|H_n^{-}(x)-F^{-}(x)|\leq h_n$ for $x\geq F(Y_{(n)}-h_n)$. As $1=F_n(Y_{(n)})$ and consequently 
\begin{align*}
1-F(Y_{(n)})=F_n(Y_{(n)})-F(Y_{(n)})\leq \frac{d_n}{n}\sup_{x}\frac{n}{d_n}|F_n(x)-x|=\mathcal{O}_P(h_n),
\end{align*}  
it follows that for $x\geq Y_{(n)}-h_n$
\begin{align*}
(\min\{x,1-x\})^{-\lambda}\leq \mathcal{O}_P(h_n^\lambda)(\min\{x,1-x\})^{-2\lambda}
\end{align*}
for $x\geq F(Y_{(n)}-h_n)$. As a result, we obtain
\begin{align*}
&\sup\limits_{x\in\left[F(Y_{(n)}-h_n), 1\right]}(\min\{x,1-x\})^{-\lambda}\left|d_{n, r}^{-1}n\left(F\left(F^{-}(x)-h_n\right)-F\left(H_n^{-}(x)-h_n\right)\right)\right|\\
=&\sup\limits_{x\in\left[F(Y_{(n)}-h_n), 1\right]}(\min\{x,1-x\})^{-\lambda}\left|d_{n, r}^{-1}n\left(x-F\left(F^{-}(x)-h_n\right)\right)-f(F^{-}(x))\right|\\
+&\sup\limits_{x\in\left[F(Y_{(n)}-h_n), 1\right]}(\min\{x,1-x\})^{-\lambda}f(F^{-}(x))\\
\leq&\mathcal{O}_P\left(h_n^{\rho}\right)+\mathcal{O}_P(h_n^\lambda)\sup_{x\in[0,1]}(\min\{x,1-x\})^{-2\lambda}f(F^{-}(x))\\
=&\mathcal{O}_P\left(h_n^{\min\{\rho,\lambda\}}\right),
\end{align*}
using  Assumptions \eqref{ass:difaprox} and \eqref{ass:difuniform}. Therefore, \eqref{assertion_2} holds and the proof is complete.
\end{proof}

Before proving Theorem \ref{thm:red_principle_local_alt}, we establish a number of auxiliary results:

\begin{lemma}\label{lemma:GLC_loc_alt}
Given the assumptions of Theorem \ref{thm:red_principle_local_alt}, it holds that
\begin{align*}
\sup\limits_{x\in [0, 1]}\left|\tilde{F}_n^{-}(x)-x\right|=\mathcal{O}_P\left(h_n\right)
\end{align*}
with $\tilde{F}_n^{-}$ defined by
\begin{align*}
\tilde{F}_n^{-}(x)=\begin{cases}0 \ \ &\text{if}\ x=0,\\
Y_{n, (i)} \ \ &\text{if}\ x=\frac{i}{n+1},\\
1 \ \ &\text{if}\ x=1,\\
& \text{linear interpolated in between},
\end{cases}
\end{align*}
 and
\begin{align*}
\sup\limits_{x\in [0, 1]}(\min\{x,1-x\})^{-\lambda}\left|\tilde{F}^{-}_n(x)-x\right|
=\mathcal{O}_P\pr{n^{\lambda}h_n}.
\end{align*}
\end{lemma}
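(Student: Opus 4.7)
My plan is to adapt the Glivenko--Cantelli-type argument from the proof of Theorem~\ref{thm:red_principle}, with the stationary reduction principle replaced by its local-alternative analogue (Proposition~\ref{prop:local_alt}). I prove the unweighted bound first and deduce the weighted bound by piecewise linearity.

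Piecewise linearity of $\tilde F_n^-$ together with $\tilde F_n^-(0)=0$ and $\tilde F_n^-(1)=1$ implies that $\sup_x|\tilde F_n^-(x)-x|=\max_{1\leq i\leq n}|Y_{n,(i)}-i/(n+1)|$. Using $F_n(Y_{n,(i)})=i/n$ (where $F_n$ denotes the empirical distribution function of $Y_{n,1},\ldots,Y_{n,n}$), this is at most $\sup_x|F_n(x)-x|+1/(n+1)$. Since $d_{n,r}\to\infty$ yields $1/n=o(h_n)$, it suffices to show $\sup_x|F_n(x)-x|=\mathcal{O}_P(h_n)$.

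To this end, I apply Proposition~\ref{prop:local_alt} at $t=1$, drop the weight $(\min\{x,1-x\})^{-\lambda}\geq 1$, and multiply by $d_{n,r}/n=h_n/c$. The resulting bound implies that $F_n(x)-x$ equals
\[
\frac{J_r(F^-(x))}{r!\,n}\sum_{i=1}^n H_r(\xi_i)-\frac{n-\lfloor n\tau\rfloor}{n}\bigl(x-F(F^-(x)-h_n)\bigr)
\]
up to a residual of order $\mathcal{O}_P(h_n^{1+\min\{\rho,\lambda\}})=o_P(h_n)$ uniformly in $x$. The Hermite term is $\mathcal{O}_P(h_n)$, since $|J_r(F^-(x))|\leq\sqrt{r!}$ by Cauchy--Schwarz and $\sum_{i=1}^n H_r(\xi_i)=\mathcal{O}_P(d_{n,r})$. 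For the drift I decompose $x-F(F^-(x)-h_n)=h_n f(F^-(x))+r_n(x)$ and bound the two pieces by a constant times $h_n(\min\{x,1-x\})^{2\lambda}$ using Assumptions~\eqref{ass:difuniform} and \eqref{ass:difaprox} respectively; since $(\min\{x,1-x\})^{2\lambda}\leq 1$, both are $\mathcal{O}(h_n)$ uniformly in $x$, which yields the first claim.

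For the second assertion I reuse the piecewise-linearity trick from the proof of Theorem~\ref{thm:red_principle}. On $[0,1/(n+1)]$ one has $\tilde F_n^-(x)-x=((n+1)Y_{n,(1)}-1)x$, so $x^{-\lambda}|\tilde F_n^-(x)-x|=x^{1-\lambda}|(n+1)Y_{n,(1)}-1|$ is increasing in $x$ (since $\lambda<1/3<1$) and therefore maximized at $x=1/(n+1)$; a symmetric argument applies on $[n/(n+1),1]$, while on the middle interval $(\min\{x,1-x\})^{-\lambda}\leq(n+1)^\lambda$. Hence $\sup_x(\min\{x,1-x\})^{-\lambda}|\tilde F_n^-(x)-x|\leq(n+1)^\lambda\sup_x|\tilde F_n^-(x)-x|=\mathcal{O}_P(n^\lambda h_n)$. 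The only delicate point in the whole argument is the uniform-in-$x$ control of the drift $x-F(F^-(x)-h_n)$: because $f$ need not be globally bounded, the na\"ive bound $h_n\|f\|_\infty$ is unavailable, and I must lean on Assumptions~\eqref{ass:difaprox}--\eqref{ass:difuniform} together with the constraint $2\lambda<1$ to absorb any potential boundary blow-up of $f(F^-(x))$; everything else is a direct transcription of the stationary-case argument.
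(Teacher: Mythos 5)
Your proof is correct and follows essentially the same route as the paper's: reduce the supremum to the grid points $i/(n+1)$ by piecewise linearity, bound it by $\sup_x|F_n(x)-x|+1/(n+1)$, invoke Proposition \ref{prop:local_alt} to get $\sup_x|F_n(x)-x|=\mathcal{O}_P(d_{n,r}/n)=\mathcal{O}_P(h_n)$, and pick up the factor $(n+1)^{\lambda}$ for the weighted version. The only difference is that you spell out how the reduction principle at $t=1$ yields the unweighted Glivenko--Cantelli rate (bounding the Hermite term and the drift separately via Assumptions \eqref{ass:difaprox}--\eqref{ass:difuniform}), a step the paper simply asserts; this is a welcome clarification, not a different argument.
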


\begin{proof}
It is clear that the function $\tilde{F}_n^{-}(x)-x$, $x\in[0,1]$, takes its maximum for some $x\in\{1/(n+1),2/(n+1),\ldots,n/(n+1)\}$, because it is linear between these points. 
As a result, we may conclude that
\begin{align*}
&\sup_{x\in[0,1]}(\min\{x,1-x\})^{-\lambda}\left|\tilde{F}_n^{-}(x)-x\right|\\
= &\sup_{x\in[1/(n+1),1/2]}x^{-\lambda}\left|\tilde{F}_n^{-}(x)-x\right|+\sup_{x\in[1/2,n/(n+1)]}(1-x)^{-\lambda}\left|\tilde{F}_n^{-}(x)-x\right|\\
\leq &(n+1)^{\lambda}\sup_{x\in[0, 1]}\left|\tilde{F}_n^-(x)-x\right|.
\end{align*}

By definition, $F_n(Y_{n, (i)})=i/n$. It then follows that
\begin{align*}
\sup_{x\in[0,1]}\left|\tilde{F}_n^{-}(x)-x\right|=&\max_{i=1,\ldots,n}\left|\tilde{F}_n^{-}\Big(\frac{i}{n+1}\Big)-\frac{i}{n+1}\right|=\max_{i=1,\ldots,n}\left|Y_{n, (i)}-\frac{i}{n+1}\right|\\
\leq &\max_{i=1,\ldots,n}\left|\frac{i}{n}-Y_{n, (i)}\right|+\frac{1}{n+1}=\sup\limits_{x\in[0, 1]}\left|F_n(x)-x\right|+\frac{1}{n+1}.
\end{align*}
Proposition \ref{prop:local_alt} yields 
\begin{align*}
\sup\limits_{x\in[0, 1]}\left|F_n(x)-x\right|=\mathcal{O}_P\left(\frac{d_{n, r}}{n}\right).
\end{align*}
This completes the proof.
\end{proof}

\begin{lemma}\label{lemma:quotient_loca_alt}
Given the assumptions of Theorem \ref{thm:red_principle_local_alt}, it holds that
\begin{align*}
&\sup_{x\in[0,1]}\left(\frac{\min\{\tilde{F}_n^{-}(x),1-\tilde{F}_n^{-}(x)\}}{\min\{x,1-x\}}\right)^{\lambda}=O_P\pr{n^{\lambda}h_n^{\lambda}}
\intertext{with $\tilde{F}_n^{-}$ defined in \eqref{eq:cont_mod_of_F} and}
&
\sup_{x\in[0,1]}\left(\frac{\min\{x,1-x\}}{\min\{\tilde{F}_n^{-}(x),1-\tilde{F}_n^{-}(x)\}}\right)^{\epsilon_1}=O_P\pr{n^{\epsilon_1}h_n^{\epsilon_1}}.
\end{align*}
\end{lemma}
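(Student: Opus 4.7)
The plan is to adapt the argument given in the proof of Theorem \ref{thm:red_principle} (the stationary counterpart) to the local-alternative setting, using Lemma \ref{lemma:GLC_loc_alt} in place of the weighted Glivenko--Cantelli estimate employed there. Since $h_n = c n^{-1} d_{n, r}$, the two target rates $(n h_n)^{\lambda}$ and $(n h_n)^{\epsilon_1}$ coincide with $d_{n, r}^{\lambda}$ and $d_{n, r}^{\epsilon_1}$, so that structurally both claims mirror the bound $\mathcal{O}_P(d_{n, r}^{\lambda})$ obtained under stationarity.

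For the first claim, one would split
\begin{align*}
\sup_{x\in[0,1]}\left(\frac{\min\{\tilde F_n^-(x),1-\tilde F_n^-(x)\}}{\min\{x,1-x\}}\right)^{\lambda} \leq \sup_{x\in(0,1/2]}\!\left(\frac{\tilde F_n^-(x)}{x}\right)^{\lambda}+\sup_{x\in[1/2,1)}\!\left(\frac{1-\tilde F_n^-(x)}{1-x}\right)^{\lambda}
\end{align*}
and exploit the piecewise-linear structure of $\tilde F_n^-$. On $[0,1/(n+1)]$, the quotient equals $(n+1)Y_{n,(1)}$ by linearity, which the uniform bound of Lemma \ref{lemma:GLC_loc_alt} controls by $1 + (n+1)\sup_y|\tilde F_n^-(y)-y| = \mathcal{O}_P(nh_n)$. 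On $[1/(n+1),1/2]$, one writes $\tilde F_n^-(x)/x \leq 1 + |\tilde F_n^-(x)-x|/x \leq 1 + (n+1)\sup_y|\tilde F_n^-(y)-y|$, again of order $\mathcal{O}_P(nh_n)$. The tail $x \geq 1/2$ is handled symmetrically.

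For the second claim one applies the same decomposition to the reciprocal ratios. Away from the endpoints, precisely for $x$ above a suitable constant multiple of $h_n$, monotonicity of $\tilde F_n^-$ together with the uniform estimate of Lemma \ref{lemma:GLC_loc_alt} yield $\tilde F_n^-(x) \geq x - \mathcal{O}_P(h_n) \geq x/2$, so that $x/\tilde F_n^-(x) = \mathcal{O}_P(1)$. In the boundary region $x \leq 1/(n+1)$ piecewise linearity gives $x/\tilde F_n^-(x) = 1/((n+1)Y_{n,(1)})$, and a lower bound on $Y_{n,(1)}$ is needed. Here, invoking monotonicity of $\tilde F_n^-$ once more one may anchor $Y_{n,(1)}$ from below by pushing to an order statistic $Y_{n,(i)}$ with $i/(n+1)$ just above the threshold where Lemma \ref{lemma:GLC_loc_alt} yields a non-trivial lower bound of order $h_n$; this produces a polynomial-in-$n$ lower bound on $Y_{n,(1)}$, which raised to the (possibly arbitrarily small) exponent $\epsilon_1$ stays within the required rate $\mathcal{O}_P((nh_n)^{\epsilon_1})$. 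The symmetric treatment of $1 - Y_{n,(n)}$ handles the right boundary.

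The main obstacle is precisely the reciprocal estimate at the extreme boundary $x \in (0, 1/(n+1)]$: under long-range dependence $h_n \gg 1/n$, so the uniform bound $|Y_{n,(1)} - 1/(n+1)| = \mathcal{O}_P(h_n)$ is not tight enough to lower-bound $Y_{n,(1)}$ directly. The key observation that makes the argument go through is that the exponent $\epsilon_1$ in assumption \eqref{assuderivative} can be chosen arbitrarily small (as the normal example preceding Lemma \ref{lemma:GLC_loc_alt} confirms), so that even a comparatively crude polynomial lower bound on $Y_{n,(1)}$, derived from monotonicity anchored at an appropriate order statistic, suffices once raised to the power $\epsilon_1$ to absorb into the claimed rate.
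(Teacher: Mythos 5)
Your treatment of the first bound follows the paper's proof essentially verbatim: split at $1/(n+1)$, $1/2$, and $n/(n+1)$, use piecewise linearity on the boundary pieces and $|\tilde{F}_n^{-}(x)-x|/x\le (n+1)\sup_y|\tilde{F}_n^{-}(y)-y|$ in the interior, then invoke Lemma \ref{lemma:GLC_loc_alt}. That part is fine.

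The second bound contains a genuine gap, and it sits exactly at the point you yourself flag as the main obstacle. You need $Y_{n,(1)}=\tilde{F}_n^{-}(1/(n+1))$ to be bounded \emph{below}, but the mechanism you propose --- anchoring $Y_{n,(1)}$ from below via a higher order statistic $Y_{n,(i)}$ for which Lemma \ref{lemma:GLC_loc_alt} gives a nontrivial lower bound --- cannot work: monotonicity only yields $Y_{n,(1)}\le Y_{n,(i)}$, so a lower bound on a higher order statistic says nothing about $Y_{n,(1)}$, which could a priori be arbitrarily close to $0$; and Lemma \ref{lemma:GLC_loc_alt} only controls $|Y_{n,(1)}-1/(n+1)|$ up to $\mathcal{O}_P(h_n)\gg 1/n$, which is useless as a lower bound. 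It is also not legitimate to "choose $\epsilon_1$ arbitrarily small": $\epsilon_1$ is fixed by assumption \eqref{assuderivative}, and an uncontrolled polynomial bound $Y_{n,(1)}^{-1}=\mathcal{O}_P(n^{c})$ raised to the power $\epsilon_1$ gives $n^{c\epsilon_1}$, which exceeds the target $(nh_n)^{\epsilon_1}=\mathcal{O}(d_{n,r}^{\epsilon_1})$ unless $c$ is pinned down. The paper closes this step with a one-line union bound using only the uniform marginals of the transformed observations: for any $C>0$,
\begin{align*}
P\Bigl(Y_{n,(1)}\le \tfrac{1}{Cn}\Bigr)\le \sum_{i=1}^{n}P\Bigl(Y_{n,i}\le \tfrac{1}{Cn}\Bigr)\le \frac{1}{C},
\end{align*}
whence $Y_{n,(1)}^{-1}=\mathcal{O}_P(n)$, and symmetrically $\bigl(1-Y_{n,(n)}\bigr)^{-1}=\mathcal{O}_P(n)$. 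Combined with $\bigl(x/\tilde{F}_n^{-}(x)\bigr)^{\epsilon_1}\le 1+|x-\tilde{F}_n^{-}(x)|^{\epsilon_1}/Y_{n,(1)}^{\epsilon_1}$ for $x\ge 1/(n+1)$ and Lemma \ref{lemma:GLC_loc_alt}, this gives the claimed $\mathcal{O}_P(n^{\epsilon_1}h_n^{\epsilon_1})$. Note also that your intermediate step "$x/\tilde{F}_n^{-}(x)=\mathcal{O}_P(1)$ for $x$ above a constant multiple of $h_n$" leaves the range $1/(n+1)\le x\le Ch_n$ untreated; the same $Y_{n,(1)}$ bound is what covers it.
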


\begin{proof}
Because $\tilde{F}_n^{-}$ is piecewise linear, we have 
\begin{align*}
\frac{\tilde{F}_n^{-}(x)}{x}&=(n+1)\tilde{F}_n^{-}\Big(\frac{1}{n+1}\Big)\ \ \ &\text{for }x\leq\frac{1}{n+1},\\
\frac{1-\tilde{F}_n^{-}(x)}{1-x}&=(n+1)\left(1-\tilde{F}_n^{-}\Big(\frac{n}{n+1}\Big)\right)\ \ \ &\text{for }x\geq\frac{n}{n+1}.
\end{align*}
Using this and the inequality $x^{\lambda}\leq 1+|x-1|^{\lambda}$, we get
\begin{align*}
&\sup_{x\in[0,1]}\left(\frac{\min\{\tilde{F}_n^{-}(x),1-\tilde{F}_n^{-}(x)\}}{\min\{x,1-x\}}\right)^{\lambda}\\
\leq &\sup_{x\in[1/(n+1),1/2]}\left(\frac{\tilde{F}_n^-(x)}{x}\right)^{\lambda}+\sup_{x\in[1/2,n/(n+1)]}\left(\frac{1-\tilde{F}_n^-(x)}{1-x}\right)^{\lambda}\\
\leq &\sup_{x\in[1/(n+1),1/2]}\left(\frac{|\tilde{F}_n^-(x)-x|}{x}\right)^{\lambda}+\sup_{x\in[1/2,n/(n+1)]}\left(\frac{|1-\tilde{F}_n^-(x)-(1-x)|}{(1-x)^{\lambda}}\right)^{\lambda}+2\\
\leq &(n+1)^{\lambda}\sup_{x\in[0,1]}|\tilde{F}_n^-(x)-x|^{\lambda}+2=O_P\pr{d_{n,r}^{\lambda}}.
\end{align*}
Similar arguments lead to
\begin{multline*}
\sup_{x\in[0,1]}\left(\frac{\min\{x,1-x\}}{\min\{\tilde{F}_n^{-}(x),1-\tilde{F}_n^{-}(x)\}}\right)^{\epsilon_1}\\
\leq \left(\max\left\{\frac{1}{\tilde{F}_n^{-}(1/(n+1))},\frac{1}{1-\tilde{F}_n^{-}(n/(n+1))}\right\}\right)^{\epsilon_1}\sup_{x\in[0,1]}|\tilde{F}_n^-(x)-x|^{\epsilon_1}+2.
\end{multline*}
It remains to show that the first factor on the right-hand side of the above inequality is of order $O_P\pr{n^{\epsilon_1}}$. For any constant $C>0$, it holds that
\begin{align*}
P\left(\frac{1}{\tilde{F}_n^{-}(1/(n+1))}\geq Cn\right) =P\left(Y_{n,(1)}\leq \frac{1}{Cn}\right)\leq \sum_{i=1}^n P\left(Y_{i}\leq \frac{1}{Cn}\right)
\leq n\frac{1}{Cn}=\frac{1}{C}
\end{align*}
and consequently $\pr{\tilde{F}_n^{-}(1/(n+1))}^{-1}=O_P(n)$. The same holds for $\pr{1-\tilde{F}_n^{-}(n/(n+1))}^{-1}$. This completes the proof.
\end{proof}

\begin{lemma}\label{lemma:coefficient_loc_alt}
Given the assumptions of Theorem \ref{thm:red_principle_local_alt}, it holds that
\begin{align*}
&\sup\limits_{x\in [0, 1]}(\min\{x,1-x\})^{-\lambda}\left|J_r\left(F^{-}\left(\tilde{F}_n^{-}(x)\right)\right)-J_r\left(F^{-}(x)\right)\right|=\mathcal{O}_P\pr{n^\lambda \sqrt{h_n}}.
\end{align*}
\end{lemma}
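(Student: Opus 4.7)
The plan is to reduce the bound on $|J_r(F^-(\tilde F_n^-(x))) - J_r(F^-(x))|$ to a bound on $|\tilde F_n^-(x) - x|$ via a half-Hölder-type estimate for the Hermite coefficient function, and then invoke Lemma \ref{lemma:GLC_loc_alt}.

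First I would establish the basic estimate that there exists a constant $C > 0$ with
\begin{align*}
|J_r(a) - J_r(b)| \le C\sqrt{|F(b) - F(a)|}, \qquad a,b \in \mathbb{R}.
\end{align*}
This is immediate from the definition $J_r(y) = \E[\mathbf{1}_{\{G(\xi_1) \le y\}} H_r(\xi_1)]$: for $a \le b$ one writes the difference as $\E[\mathbf{1}_{\{a < G(\xi_1) \le b\}} H_r(\xi_1)]$ and applies Cauchy--Schwarz, using $\E[H_r(\xi_1)^2] = r!$ and $P(a < G(\xi_1) \le b) = F(b) - F(a)$. Applying this with $a = F^-(\tilde F_n^-(x))$ and $b = F^-(x)$, and using continuity of $F$ (so that $F \circ F^- = \mathrm{id}$ on $[0,1]$), gives $|F(b)-F(a)| = |\tilde F_n^-(x) - x|$, whence
\begin{align*}
\bigl|J_r(F^-(\tilde F_n^-(x))) - J_r(F^-(x))\bigr| \le C\sqrt{|\tilde F_n^-(x) - x|}.
\end{align*}

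Next I would pull the weight inside the square root and rewrite the quantity of interest as
\begin{align*}
\sup_{x \in [0,1]} (\min\{x,1-x\})^{-\lambda} \sqrt{|\tilde F_n^-(x) - x|} = \sqrt{\,\sup_{x \in [0,1]} (\min\{x,1-x\})^{-2\lambda} |\tilde F_n^-(x) - x|\,}.
\end{align*}
To control the expression under the square root I would mimic the argument already used inside the proof of Lemma \ref{lemma:GLC_loc_alt}: the function $\tilde F_n^- - \mathrm{id}$ is piecewise linear between the knots $i/(n+1)$, so the supremum of the weighted difference over $[1/(n+1), n/(n+1)]$ is attained at a knot, and on the boundary intervals $[0,1/(n+1)]$ and $[n/(n+1),1]$ the ratio $|\tilde F_n^-(x)-x|/\min\{x,1-x\}^{2\lambda}$ is again dominated by $(n+1)^{2\lambda}\sup_x |\tilde F_n^-(x)-x|$. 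Hence
\begin{align*}
\sup_{x \in [0,1]} (\min\{x,1-x\})^{-2\lambda} |\tilde F_n^-(x) - x| \le (n+1)^{2\lambda} \sup_{x \in [0,1]} |\tilde F_n^-(x) - x|.
\end{align*}

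Finally I would apply the uniform bound $\sup_{x \in [0,1]} |\tilde F_n^-(x) - x| = \mathcal{O}_P(h_n)$ from Lemma \ref{lemma:GLC_loc_alt}, giving $\mathcal{O}_P(n^{2\lambda} h_n)$ inside the square root and therefore $\mathcal{O}_P(n^\lambda \sqrt{h_n})$ overall, as required. The only non-routine step is the half-Hölder estimate for $J_r$, but this is a clean Cauchy--Schwarz application; everything else is a direct combination of the piecewise-linear structure of $\tilde F_n^-$ with Lemma \ref{lemma:GLC_loc_alt}.
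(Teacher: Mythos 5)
Your proposal is correct and follows essentially the same route as the paper: the half-H\"older bound $|J_r(a)-J_r(b)|\le C\sqrt{|F(b)-F(a)|}$, pulling the weight inside the square root to get the $-2\lambda$ exponent, the piecewise-linear bound $(n+1)^{2\lambda}\sup_x|\tilde F_n^-(x)-x|$, and finally the uniform rate $\mathcal{O}_P(h_n)$ from Lemma \ref{lemma:GLC_loc_alt}. The only difference is that you spell out the Cauchy--Schwarz derivation of the half-H\"older estimate and the boundary-interval argument explicitly, both of which the paper merely recalls from earlier proofs.
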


\begin{proof}
Recall that for $x<y$
\begin{align*}
|J_r(x)-J_r(y)|\leq C\sqrt{F(y)-F(x)}.
\end{align*}
With Lemma \ref{lemma:GLC_loc_alt} it then follows that
\begin{align*}
&\sup\limits_{x\in [0, 1]}(\min\{x,1-x\})^{-\lambda}\left|J_r\left(F^{-}\left(\tilde{F}_n^{-}(x)\right)\right)-J_r\left(F^{-}(x)\right)\right|\\
\leq &\sqrt{\sup\limits_{x\in [0, 1]}(\min\{x,1-x\})^{-2\lambda}\left|\tilde{F}_n^{-}(x)-x\right|}\\
=&\mathcal{O}_P\left(n^\lambda \sqrt{h_n}\right).
\end{align*}
\end{proof}

\begin{lemma}\label{lemma:density_loc_alt}
Given the assumptions of Theorem \ref{thm:red_principle_local_alt}, it holds that
\begin{multline*}
\sup\limits_{x\in [0, 1]}(\min\{x,1-x\})^{-(\lambda-\epsilon_1)}\biggl|d_{n, r}^{-1}n\left(\tilde{F}^{-}_n(x)-F\left(F^{-}\left(\tilde{F}^{-}_n(x)\right)
-h_n\right)\right)\\-d_{n, r}^{-1}n\left(x-F\left(F^{-}(x)-h_n\right)\right)\biggr|=o_P(1).
\end{multline*}
\end{lemma}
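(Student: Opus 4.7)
Write $\psi_n(y) \defeq d_{n,r}^{-1} n\bigl(y - F\pr{F^{-}(y) - h_n}\bigr)$, so that the statement reduces to showing
\begin{align*}
\sup_{x\in[0,1]}(\min\{x,1-x\})^{-(\lambda-\epsilon_1)}\left|\psi_n\pr{\tilde{F}_n^{-}(x)} - \psi_n(x)\right| = o_P(1).
\end{align*}
The plan is to apply the mean value theorem in the $y$-variable, bound the derivative $\psi_n'$ by means of Assumption \eqref{assuderivative}, and then trade the weight at the intermediate point for the weight at $x$ using Lemma \ref{lemma:quotient_loca_alt}, absorbing the remaining factor $|\tilde{F}_n^{-}(x)-x|$ through Lemma \ref{lemma:GLC_loc_alt}.

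First, since $F$ is strictly monotone with density $f$, we have $(F^{-})'(y) = 1/f(F^{-}(y))$, and therefore
\begin{align*}
\psi_n'(y) = d_{n,r}^{-1} n\left(1 - \frac{f(F^{-}(y) - h_n)}{f(F^{-}(y))}\right).
\end{align*}
Because $h_n = c\, n^{-1} d_{n,r}$, Assumption \eqref{assuderivative} applied with $h = -h_n$ gives, for $n$ large,
\begin{align*}
|\psi_n'(y)| \leq d_{n,r}^{-1} n \cdot C\,(\min\{y,1-y\})^{-\epsilon_1} h_n = Cc\,(\min\{y,1-y\})^{-\epsilon_1}.
\end{align*}
For any $x\in[0,1]$, the mean value theorem yields an intermediate point $\xi_x$ between $x$ and $\tilde{F}_n^{-}(x)$ such that
\begin{align*}
\left|\psi_n\pr{\tilde{F}_n^{-}(x)} - \psi_n(x)\right| \leq Cc\,(\min\{\xi_x,1-\xi_x\})^{-\epsilon_1}\left|\tilde{F}_n^{-}(x)-x\right|.
\end{align*}

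Next, since $\xi_x$ lies between $x$ and $\tilde{F}_n^{-}(x)$, the elementary inequality
\begin{align*}
(\min\{\xi_x,1-\xi_x\})^{-\epsilon_1} \leq (\min\{x,1-x\})^{-\epsilon_1} + (\min\{\tilde{F}_n^{-}(x),1-\tilde{F}_n^{-}(x)\})^{-\epsilon_1}
\end{align*}
holds, so that
\begin{align*}
(\min\{\xi_x,1-\xi_x\})^{-\epsilon_1} \leq (\min\{x,1-x\})^{-\epsilon_1}\left(1 + \left(\frac{\min\{x,1-x\}}{\min\{\tilde{F}_n^{-}(x),1-\tilde{F}_n^{-}(x)\}}\right)^{\epsilon_1}\right).
\end{align*}
By Lemma \ref{lemma:quotient_loca_alt}, the supremum over $x\in[0,1]$ of the second factor is $\mathcal{O}_P(n^{\epsilon_1} h_n^{\epsilon_1})$. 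Combining the previous displays yields
\begin{align*}
\sup_{x\in[0,1]}(\min\{x,1-x\})^{-(\lambda-\epsilon_1)}\left|\psi_n(\tilde{F}_n^{-}(x))-\psi_n(x)\right| \leq \mathcal{O}_P\pr{n^{\epsilon_1}h_n^{\epsilon_1}}\cdot \sup_{x\in[0,1]}(\min\{x,1-x\})^{-\lambda}|\tilde{F}_n^{-}(x)-x|.
\end{align*}

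Finally, Lemma \ref{lemma:GLC_loc_alt} bounds the last supremum by $\mathcal{O}_P(n^{\lambda} h_n)$. Therefore the whole expression is
\begin{align*}
\mathcal{O}_P\pr{n^{\lambda+\epsilon_1} h_n^{1+\epsilon_1}} = \mathcal{O}_P\pr{n^{\lambda-1} d_{n,r}^{1+\epsilon_1}}
\end{align*}
after substituting $h_n = cn^{-1}d_{n,r}$, and this is $o_P(1)$ by the hypothesis $n^{\lambda} d_{n,r}^{1+\epsilon_1} = o(n)$ of Theorem \ref{thm:red_principle_local_alt}. The main obstacle is the weight-exchange step: the derivative bound produces a singular factor at the intermediate point $\xi_x$, and pushing it back to a singularity at $x$ is precisely what forces the additional factor $(n h_n)^{\epsilon_1}$ from Lemma \ref{lemma:quotient_loca_alt}; it is the balancing of these three powers against the rate assumption on $d_{n,r}$ that drives the argument.
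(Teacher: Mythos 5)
Your proof is correct and follows essentially the same route as the paper's: mean value theorem applied to $g(y)=y-F(F^{-}(y)-h_n)$, the derivative bound from condition \eqref{assuderivative}, the weight exchange at the intermediate point via Lemma \ref{lemma:quotient_loca_alt}, and Lemma \ref{lemma:GLC_loc_alt} for the remaining factor, arriving at the same rate $\mathcal{O}_P\pr{n^{\lambda-1}d_{n,r}^{1+\epsilon_1}}$. Your explicit justification that the weight at the intermediate point $\xi_x$ is controlled by the weights at the two endpoints (via concavity of $\min\{y,1-y\}$) is in fact slightly more careful than the paper's direct application of Lemma \ref{lemma:quotient_loca_alt} to $\zeta_x$.
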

\begin{proof}
We rewrite the difference as
\begin{multline*}
\left(\tilde{F}^{-}_n(x)-F\left(F^{-}\left(\tilde{F}^{-}_n(x)\right)-h_n\right)\right)-\left(x-F\left(F^{-}(x)-h_n\right)\right)
=g(\tilde{F}^{-}_n(x))-g(x)
\end{multline*}
with $g(x)\defeq x-F\left(F^{-}(x)-h_n\right)$. The function $g$ has the derivative 
\begin{align*}
g'(x)=1-\frac{f(F^-(x)-h_n)}{f(F^-(x))},
\end{align*}
which yields
\begin{align*}
g(\tilde{F}^{-}_n(x))-g(x)=\left(1-\frac{f(F^-(\zeta_x)-h_n)}{f(F^-(\zeta_x))}\right)\left(\tilde{F}^{-}_n(x)-x\right)
\end{align*}
for some $\zeta_x\in\left(\min\{\tilde{F}^{-}_n(x),x\},\max\{\tilde{F}^{-}_n(x),x\}\right)$. We conclude that
\begin{align*}
&\sup\limits_{x\in [0, 1]}(\min\{x,1-x\})^{-(\lambda-\epsilon_1)}\biggl|d_{n, r}^{-1}n\left(\tilde{F}^{-}_n(x)-F\left(F^{-}\left(\tilde{F}^{-}_n(x)\right)
-h_n\right)\right)\\
&\qquad\qquad\qquad\qquad\qquad\qquad \ \ -d_{n, r}^{-1}n\left(x-F\left(F^{-}(x)-h_n\right)\right)\biggr|\\
\leq &d_{n, r}^{-1}n\sup_{x\in[0,1]}\left(\frac{\min\{\zeta_x,1-\zeta_x\}}{\min\{x,1-x\}}\right)^{-\epsilon_1}\\ 
&\sup\limits_{x\in [0, 1]}\!(\min\{\zeta_x,1-\zeta_x\})^{\epsilon_1}\left(\! 1-\frac{f(F^-(\zeta_x)-h_n)}{f(F^-(\zeta_x))}\right)\sup\limits_{x\in [0, 1]}\!(\min\{x,1-x\})^{-\lambda}\left|\tilde{F}^{-}_n(x)-x\right|.
\end{align*}
By condition \eqref{assuderivative}, Lemma \ref{lemma:GLC_loc_alt},  and Lemma \ref{lemma:quotient_loca_alt} it follows that the right-hand side of the above inequality is of the order $O_P\pr{nd_n^{-1}n^{\epsilon_1}h_n^{\epsilon_1}h_nn^{\lambda}h_n}=O_P\pr{n^{\lambda-1}d_n^{1+\epsilon_1}}=o_P\pr{1}$.
\end{proof}

\begin{proof}[Proof of Theorem \ref{thm:red_principle_local_alt}]
Our goal is to derive   a reduction principle for the  two-parameter empirical process of the ranks,  i.e., for
\begin{align*}
\hat{G}_{\lfloor nt\rfloor}(x-)-\frac{\lfloor nt\rfloor}{n}\hat{G}_{n}(x-), \ t\in [0, 1], \ x\in [0, 1]
\end{align*}
with $\hat{G}_{k}(x)\defeq\sum_{i=1}^{k}1_{\left\{\frac{1}{n+1}R_i\leq x\right\}}$ and $R_i=\sum_{j=1}^{n}1_{\left\{Y_{n, j}\leq Y_{n, i}\right\}}$.
Recall that
\begin{align*}
\hat{G}_{\lfloor nt\rfloor}(x-)-\frac{\lfloor nt\rfloor}{n}\hat{G}_{n}(x-)={\lfloor nt\rfloor} F_{\lfloor nt\rfloor}\big(\tilde{F}_n^{-}(x)-\big)-{\lfloor nt\rfloor}F_n\big(\tilde{F}_n^{-}(x)-\big), 
\end{align*}
 i.e.,  we have to show that
\begin{multline*}
\sup\limits_{ t\in [0, 1], x\in [0, 1]}d_{n, r}^{-1}(\min\{x,1-x\})^{-\lambda}\bigg|
{\lfloor nt\rfloor} F_{\lfloor nt\rfloor}\big(\tilde{F}_n^{-}(x)-\big)-{\lfloor nt\rfloor}F_n\big(\tilde{F}_n^{-}(x)-\big)\\-\frac{1}{r!}J_r(F^{-}(x))\bigg(\sum\limits_{i=1}^{\lfloor nt\rfloor}H_r\left(\xi_i\right)-\frac{\lfloor nt\rfloor}{n}\sum\limits_{i=1}^nH_r\left(\xi_i\right)\bigg)\\
+\left(1_{\{t>\tau\}}\frac{\lfloor nt\rfloor-\lfloor n\tau\rfloor}{n}-\frac{\lfloor nt\rfloor}{n}\left(1-\frac{\lfloor n\tau\rfloor}{n}\right)\right)\frac{n}{d_{n, r}}\left(x-F\left(F^{-}(x)-h_n\right)\right)\bigg|=o_P(1).
\end{multline*}

According to Lemma \ref{lemma:coefficient_loc_alt} and Lemma \ref{lemma:density_loc_alt}, we have
\begin{align*}
&\sup\limits_{ t\in [0, 1], x\in [0, 1]}d_{n, r}^{-1}(\min\{x,1-x\})^{-\lambda}\bigg|
{\lfloor nt\rfloor} F_{\lfloor nt\rfloor}\big(\tilde{F}_n^{-}(x)-\big)-{\lfloor nt\rfloor}F_n\big(\tilde{F}_n^{-}(x)-\big)\\
&\shoveleft -\frac{1}{r!}J_r(F^{-}(x))\bigg(\sum\limits_{i=1}^{\lfloor nt\rfloor}H_r\left(\xi_i\right)-\frac{\lfloor nt\rfloor}{n}\sum\limits_{i=1}^nH_r\left(\xi_i\right)\bigg)\\
&+\left(1_{\{t>\tau\}}\left(\frac{\lfloor nt\rfloor}{n}-\frac{\lfloor n\tau\rfloor}{n}\right)-\frac{\lfloor nt\rfloor}{n}\left(1-\frac{\lfloor n\tau\rfloor}{n}\right)\right)\frac{n}{d_{n, r}}\left(x-F\left(F^{-}(x)-h_n\right)\right)
\bigg|\\
=
&\sup\limits_{ t\in [0, 1], x\in [0, 1]}d_{n, r}^{-1}(\min\{x,1-x\})^{-\lambda}\bigg|
{\lfloor nt\rfloor} F_{\lfloor nt\rfloor}\big(\tilde{F}_n^{-}(x)-\big)-{\lfloor nt\rfloor}F_n\big(\tilde{F}_n^{-}(x)-\big)\\
&\shoveleft -\frac{1}{r!}J_r(F^{-}(\tilde{F}_n^{-}(x)))\bigg(\sum\limits_{i=1}^{\lfloor nt\rfloor}H_r\left(\xi_i\right)-\frac{\lfloor nt\rfloor}{n}\sum\limits_{i=1}^nH_r\left(\xi_i\right)\bigg)\\
&+\Big(1_{\{t>\tau\}}\frac{\lfloor nt\rfloor\!-\!\lfloor n\tau\rfloor}{n}-\frac{\lfloor nt\rfloor}{n}\big(1\!-\!\frac{\lfloor n\tau\rfloor}{n}\big)\Big)\frac{n}{d_{n, r}}\left(\tilde{F}_n^{-}(x)-F\left(F^{-}(\tilde{F}_n^{-}(x))\!-\!h_n\right)\right)\bigg|\\
&+o_P(1).
\end{align*}
Moreover,  Lemma \ref{lemma:quotient_loca_alt}
yields
\begin{align*}
&\sup\limits_{ t\in [0, 1], x\in [0, 1]}d_{n, r}^{-1}(\min\{x,1-x\})^{-\lambda}\bigg|
{\lfloor nt\rfloor} F_{\lfloor nt\rfloor}\big(\tilde{F}_n^{-}(x)-\big)-{\lfloor nt\rfloor}F_n\big(\tilde{F}_n^{-}(x)-\big)\\
&-\frac{1}{r!}J_r(F^{-}(\tilde{F}_n^{-}(x)))\left(\sum\limits_{i=1}^{\lfloor nt\rfloor}H_r\left(\xi_i\right)-\frac{\lfloor nt\rfloor}{n}\sum\limits_{i=1}^nH_r\left(\xi_i\right)\right)\\
&+\left(1_{\{t>\tau\}}\left(\frac{\lfloor nt\rfloor}{n}-\frac{\lfloor n\tau\rfloor}{n}\right)-\frac{\lfloor nt\rfloor}{n}\left(1-\frac{\lfloor n\tau\rfloor}{n}\right)\right)d_{n, r}^{-1}n\left(x-F\left(F^{-}(x)-h_n\right)\right)
\bigg|\\
=&\mathcal{O}_P\left(n^{\lambda}h_n^{\lambda}\right)\sup\limits_{ t\in [0, 1], x\in [0, 1]}d_{n, r}^{-1}(\min\{x,1-x\})^{-\lambda}\bigg|
{\lfloor nt\rfloor} F_{\lfloor nt\rfloor}\big(x-\big)-{\lfloor nt\rfloor}F_n\big(x-\big)\\
&-\frac{J_r(F^{-}(x))}{r!}\bigg(\sum\limits_{i=1}^{\lfloor nt\rfloor}H_r\left(\xi_i\right)-\frac{\lfloor nt\rfloor}{n}\sum\limits_{i=1}^nH_r\left(\xi_i\right)\bigg)\shoveright\\
&+\Big(1_{\{t>\tau\}}\frac{\lfloor nt\rfloor\!-\!\lfloor n\tau\rfloor}{n}-\frac{\lfloor nt\rfloor}{n}\big(1\!-\!\frac{\lfloor n\tau\rfloor}{n}\big)\Big)
\frac{n}{d_{n, r}}\left(\tilde{F}_n^{-}(x)-F\left(F^{-}(\tilde{F}_n^{-}(x))\!-\!h_n\right)\right)
\bigg|.
\end{align*}

Due to continuity of $J_r$ and $F$,  it remains to show that 
\begin{multline*}
\sup\limits_{ t\in [0, 1], x\in [0, 1]}d_{n, r}^{-1}(\min\{x,1-x\})^{-\lambda}\bigg|
{\lfloor nt\rfloor} F_{\lfloor nt\rfloor}\big(x\big)-{\lfloor nt\rfloor}F_n\big(x\big)\\-\frac{1}{r!}J_r(F^{-}(x))\bigg(\sum\limits_{i=1}^{\lfloor nt\rfloor}H_r\left(\xi_i\right)-\frac{\lfloor nt\rfloor}{n}\sum\limits_{i=1}^nH_r\left(\xi_i\right)\bigg)\\
+\Big(1_{\{t>\tau\}}\frac{\lfloor nt\rfloor\!-\!\lfloor n\tau\rfloor}{n}-\frac{\lfloor nt\rfloor}{n}\big(1\!-\!\frac{\lfloor n\tau\rfloor}{n}\big)\Big)\frac{n}{d_{n, r}}\left(x-F\left(F^{-}(x) \!-\!h_n\right)\right)\bigg|=o_P\left(d_{n, r}^{-\lambda}\right).
\end{multline*}

It follows by Proposition \ref{prop:local_alt} that the above expression is $\mathcal{O}_P(h_n^{\rho})$ with $\rho$ as in that proposition.
As $d_{n, r}^{\rho+\lambda}=o\left(n^{\rho}\right)$, this completes the proof. 
\end{proof}

\section*{Additional simulation results}\label{app:sim}

This section provides a detailed description of the finite sample performance of  rank-based testing procedures.
More precisely, Tables \ref{table:normal-distribution}, \ref{table:Pareto-distribution}, and \ref{table:Cauchy-distribution} report the  frequencies of rejections of
the self-normalized Wilcoxon change-point test, the self-normalized Van der Waerden change-point test, 
and  the self-normalized CuSum test
for  normal margins, Pareto margins, and Cauchy margins.
 All calculations are based on $5,000$ realizations of  time series with sample sizes $n=300$ and $n=500$. 
Test decisions are based on an application of the sampling-window method for a significance level of $5\%$,
meaning that the values of the test statistics are compared to the   95\%-quantile of the empirical distribution function $\widehat{F}_{m_n, l_n}$ defined by  \eqref{eq:subsampling_estimator}.
Moreover, 
block lengths $l_n=\lfloor n^{\gamma}\rfloor$ with  $\gamma\in \left\{0.4, 0.5, 0.6\right\}$ are considered. 
Under the alternative $A$ of a change in the mean,  the power of the testing procedures is analyzed by considering different choices for the height of the level shift, denoted by $h$, and the location of the change-point, denoted by $\tau$. In the tables, the columns  that are superscribed by $h=0$ correspond to the frequency of a type 1 error, i.e. the rejection rate under the hypothesis.

\begin{landscape}
\thispagestyle{empty}
\begin{table}
\begin{threeparttable}
\footnotesize
\begin{tabular}{c c c c c c c c c c c c c c c c c c c c c}
& & & & & & & & \multicolumn{6}{c}{$\tau = 0.25$} & & \multicolumn{6}{c}{$\tau = 0.5$} \\
 \cline{9-14}   \cline{16-21}\\
 & & & & \multicolumn{3}{c}{$h=0$}  & &  \multicolumn{3}{c}{$h=0.5$}    &  \multicolumn{3}{c}{$h=1$} &  & \multicolumn{3}{c}{$h=0.5$}    & \multicolumn{3}{c}{$h=1$}    \\ 
   \cline{5-7}  \cline{9-11}     \cline{12-14} \cline{16-18} \cline{19-21}\\
& $n$ & $l_n$ & & W & V & C & & W & V &  C & W & V & C & & W & V & C & W & V & C \\
\cline{2-21}\\ 
 \multirow{6}{*}{\rotatebox{90}{$H=0.6$}} 
& 300 & 9 &  & 0.043 & 0.062 & 0.061 &  & 0.266 & 0.316 & 0.304 & 0.701 & 0.762 & 0.752 &  & 0.500 & 0.565 & 0.547 & 0.955 & 0.973 & 0.967 \\ 
 & 300 & 17 &  & 0.063 & 0.071 & 0.072 &  & 0.316 & 0.335 & 0.338 & 0.734 & 0.770 & 0.773 &  & 0.566 & 0.585 & 0.580 & 0.966 & 0.969 & 0.969 \\ 
 & 300 & 30 &  & 0.070 & 0.075 & 0.077 &  & 0.319 & 0.330 & 0.334 & 0.699 & 0.725 & 0.728 &  & 0.556 & 0.571 & 0.571 & 0.952 & 0.952 & 0.950 \\ 
 & 500 & 12 &  & 0.055 & 0.059 & 0.060 &  & 0.407 & 0.444 & 0.442 & 0.855 & 0.881 & 0.881 &  & 0.690 & 0.726 & 0.718 & 0.993 & 0.994 & 0.994 \\ 
 & 500 & 22 &  & 0.064 & 0.066 & 0.065 &  & 0.429 & 0.448 & 0.445 & 0.853 & 0.876 & 0.878 &  & 0.708 & 0.721 & 0.720 & 0.992 & 0.992 & 0.993 \\ 
 & 500 & 41 &  & 0.069 & 0.070 & 0.068 &  & 0.422 & 0.428 & 0.430 & 0.824 & 0.840 & 0.843 &  & 0.700 & 0.709 & 0.708 & 0.985 & 0.986 & 0.985 \\ 
 & \multicolumn{1}{l}{} & \multicolumn{1}{l}{} &  & \multicolumn{1}{l}{} & \multicolumn{1}{l}{} & \multicolumn{1}{l}{} &  & \multicolumn{1}{l}{} & \multicolumn{1}{l}{} & \multicolumn{1}{l}{} & \multicolumn{1}{l}{} & \multicolumn{1}{l}{} & \multicolumn{1}{l}{} &  & \multicolumn{1}{l}{} & \multicolumn{1}{l}{} & \multicolumn{1}{l}{} & \multicolumn{1}{l}{} & \multicolumn{1}{l}{} & \multicolumn{1}{l}{} \\ 
 \multirow{6}{*}{\rotatebox{90}{$H=0.7$}}  & 300 & 9 &  & 0.055 & 0.074 & 0.063 &  & 0.159 & 0.202 & 0.177 & 0.417 & 0.478 & 0.443 &  & 0.293 & 0.345 & 0.307 & 0.757 & 0.800 & 0.759 \\ 
 & 300 & 17 &  & 0.064 & 0.073 & 0.070 &  & 0.178 & 0.193 & 0.189 & 0.419 & 0.450 & 0.446 &  & 0.318 & 0.333 & 0.328 & 0.753 & 0.765 & 0.756 \\ 
 & 300 & 30 &  & 0.073 & 0.077 & 0.077 &  & 0.182 & 0.191 & 0.191 & 0.402 & 0.420 & 0.422 &  & 0.319 & 0.322 & 0.321 & 0.730 & 0.732 & 0.727 \\ 
 & 500 & 12 &  & 0.063 & 0.075 & 0.069 &  & 0.199 & 0.233 & 0.225 & 0.512 & 0.565 & 0.553 &  & 0.377 & 0.415 & 0.406 & 0.855 & 0.874 & 0.862 \\ 
 & 500 & 22 &  & 0.068 & 0.072 & 0.070 &  & 0.207 & 0.222 & 0.220 & 0.508 & 0.541 & 0.542 &  & 0.386 & 0.403 & 0.400 & 0.855 & 0.860 & 0.853 \\ 
 & 500 & 41 &  & 0.074 & 0.077 & 0.077 &  & 0.207 & 0.214 & 0.216 & 0.474 & 0.495 & 0.498 &  & 0.380 & 0.386 & 0.385 & 0.821 & 0.820 & 0.821 \\ 
 & \multicolumn{1}{l}{} & \multicolumn{1}{l}{} &  & \multicolumn{1}{l}{} & \multicolumn{1}{l}{} & \multicolumn{1}{l}{} &  & \multicolumn{1}{l}{} & \multicolumn{1}{l}{} & \multicolumn{1}{l}{} & \multicolumn{1}{l}{} & \multicolumn{1}{l}{} & \multicolumn{1}{l}{} &  & \multicolumn{1}{l}{} & \multicolumn{1}{l}{} & \multicolumn{1}{l}{} & \multicolumn{1}{l}{} & \multicolumn{1}{l}{} & \multicolumn{1}{l}{} \\ 
 \multirow{6}{*}{\rotatebox{90}{$H=0.8$}}  & 300 & 9 &  & 0.073 & 0.096 & 0.072 &  & 0.122 & 0.158 & 0.119 & 0.263 & 0.318 & 0.259 &  & 0.224 & 0.259 & 0.206 & 0.531 & 0.577 & 0.500 \\ 
 & 300 & 17 &  & 0.069 & 0.080 & 0.070 &  & 0.113 & 0.133 & 0.123 & 0.240 & 0.270 & 0.259 &  & 0.206 & 0.222 & 0.208 & 0.499 & 0.518 & 0.490 \\ 
 & 300 & 30 &  & 0.071 & 0.077 & 0.075 &  & 0.113 & 0.120 & 0.120 & 0.225 & 0.245 & 0.242 &  & 0.207 & 0.213 & 0.207 & 0.471 & 0.473 & 0.458 \\ 
 & 500 & 12 &  & 0.060 & 0.080 & 0.066 &  & 0.123 & 0.153 & 0.135 & 0.268 & 0.311 & 0.289 &  & 0.224 & 0.259 & 0.233 & 0.581 & 0.621 & 0.576 \\ 
 & 500 & 22 &  & 0.061 & 0.069 & 0.065 &  & 0.120 & 0.132 & 0.124 & 0.251 & 0.279 & 0.282 &  & 0.222 & 0.237 & 0.226 & 0.563 & 0.574 & 0.556 \\ 
 & 500 & 41 &  & 0.064 & 0.070 & 0.068 &  & 0.119 & 0.127 & 0.124 & 0.236 & 0.253 & 0.259 &  & 0.215 & 0.222 & 0.216 & 0.540 & 0.543 & 0.530 \\ 
 & \multicolumn{1}{l}{} & \multicolumn{1}{l}{} &  & \multicolumn{1}{l}{} & \multicolumn{1}{l}{} & \multicolumn{1}{l}{} &  & \multicolumn{1}{l}{} & \multicolumn{1}{l}{} & \multicolumn{1}{l}{} & \multicolumn{1}{l}{} & \multicolumn{1}{l}{} & \multicolumn{1}{l}{} &  & \multicolumn{1}{l}{} & \multicolumn{1}{l}{} & \multicolumn{1}{l}{} & \multicolumn{1}{l}{} & \multicolumn{1}{l}{} & \multicolumn{1}{l}{} \\ 
 \multirow{6}{*}{\rotatebox{90}{$H=0.9$}}  & 300 & 9 &  & 0.095 & 0.113 & 0.069 &  & 0.130 & 0.153 & 0.100 & 0.211 & 0.253 & 0.189 &  & 0.209 & 0.237 & 0.164 & 0.476 & 0.501 & 0.382 \\ 
 & 300 & 17 &  & 0.072 & 0.083 & 0.071 &  & 0.097 & 0.116 & 0.098 & 0.162 & 0.191 & 0.181 &  & 0.169 & 0.184 & 0.161 & 0.398 & 0.409 & 0.362 \\ 
 & 300 & 30 &  & 0.073 & 0.076 & 0.072 &  & 0.097 & 0.108 & 0.103 & 0.148 & 0.165 & 0.170 &  & 0.160 & 0.165 & 0.153 & 0.366 & 0.363 & 0.342 \\ 
 & 500 & 12 &  & 0.077 & 0.097 & 0.075 &  & 0.103 & 0.124 & 0.098 & 0.187 & 0.226 & 0.193 &  & 0.171 & 0.197 & 0.157 & 0.455 & 0.481 & 0.412 \\ 
 & 500 & 22 &  & 0.066 & 0.079 & 0.069 &  & 0.089 & 0.098 & 0.092 & 0.160 & 0.187 & 0.183 &  & 0.151 & 0.163 & 0.147 & 0.413 & 0.422 & 0.392 \\ 
 & 500 & 41 &  & 0.066 & 0.073 & 0.068 &  & 0.089 & 0.096 & 0.096 & 0.145 & 0.161 & 0.163 &  & 0.147 & 0.152 & 0.148 & 0.383 & 0.386 & 0.365 \\ 
\end{tabular}
\caption{Rejection rates of the  self-normalized CuSum (C), the self-normalized Wilcoxon (W) and the self-normalized Van der Waerden (V) change-point tests obtained by subsampling with block length $l_n = \lfloor n^{\gamma}\rfloor$, $\gamma \in \left\{0.4, 0.5, 0.6\right\}$,  for  transformed fractional Gaussian noise time series of length $n$ with Hurst parameter $H$, marginal standard normal distribution and a change in location of height $h$ after a proportion $\tau$ of the simulated data.}
\label{table:normal-distribution}
\end{threeparttable}
\end{table}
\end{landscape}

\begin{landscape}
\thispagestyle{empty}
\begin{table}
\begin{threeparttable}
\footnotesize
\begin{tabular}{c c c c c c c c c c c c c c c c c c c c c}
& & & & & & & & \multicolumn{6}{c}{$\tau = 0.25$} & & \multicolumn{6}{c}{$\tau = 0.5$} \\
 \cline{9-14}   \cline{16-21}\\
 & & & & \multicolumn{3}{c}{$h=0$}  & &  \multicolumn{3}{c}{$h=0.5$}    &  \multicolumn{3}{c}{$h=1$} &  & \multicolumn{3}{c}{$h=0.5$}    & \multicolumn{3}{c}{$h=1$}    \\ 
   \cline{5-7}  \cline{9-11}     \cline{12-14} \cline{16-18} \cline{19-21}\\
& $n$ & $l_n$ & & W & V & C & & W & V &  C & W & V & C & & W & V & C & W & V & C \\
\cline{2-21}\\ 
 \multirow{6}{*}{\rotatebox{90}{$H=0.6$}} 
 & 300 & 9 &  & 0.045 & 0.063 & 0.017 &  & 0.843 & 0.919 & 0.214 & 0.974 & 0.989 & 0.688 &  & 0.985 & 0.986 & 0.516 & 1.000 & 1.000 & 0.923 \\ 
 & 300 & 17 &  & 0.066 & 0.072 & 0.045 &  & 0.869 & 0.919 & 0.341 & 0.975 & 0.986 & 0.785 &  & 0.990 & 0.985 & 0.659 & 1.000 & 1.000 & 0.954 \\ 
 & 300 & 30 &  & 0.082 & 0.084 & 0.061 &  & 0.828 & 0.883 & 0.350 & 0.936 & 0.959 & 0.733 &  & 0.980 & 0.970 & 0.666 & 0.999 & 0.998 & 0.931 \\ 
 & 500 & 12 &  & 0.053 & 0.062 & 0.026 &  & 0.944 & 0.975 & 0.436 & 0.995 & 0.999 & 0.882 &  & 0.999 & 0.999 & 0.765 & 1.000 & 1.000 & 0.986 \\ 
 & 500 & 22 &  & 0.058 & 0.062 & 0.042 &  & 0.948 & 0.972 & 0.497 & 0.993 & 0.997 & 0.902 &  & 0.998 & 0.996 & 0.815 & 1.000 & 1.000 & 0.987 \\ 
 & 500 & 41 &  & 0.066 & 0.068 & 0.052 &  & 0.923 & 0.958 & 0.494 & 0.980 & 0.988 & 0.865 &  & 0.996 & 0.994 & 0.811 & 1.000 & 1.000 & 0.976 \\ 
 & \multicolumn{1}{l}{} & \multicolumn{1}{l}{} &  & \multicolumn{1}{l}{} & \multicolumn{1}{l}{} & \multicolumn{1}{l}{} &  & \multicolumn{1}{l}{} & \multicolumn{1}{l}{} & \multicolumn{1}{l}{} & \multicolumn{1}{l}{} & \multicolumn{1}{l}{} & \multicolumn{1}{l}{} &  & \multicolumn{1}{l}{} & \multicolumn{1}{l}{} & \multicolumn{1}{l}{} & \multicolumn{1}{l}{} & \multicolumn{1}{l}{} & \multicolumn{1}{l}{} \\ 
 \multirow{6}{*}{\rotatebox{90}{$H=0.7$}}  & 300 & 9 &  & 0.061 & 0.077 & 0.026 &  & 0.574 & 0.691 & 0.130 & 0.809 & 0.879 & 0.471 &  & 0.875 & 0.878 & 0.344 & 0.992 & 0.987 & 0.787 \\ 
 & 300 & 17 &  & 0.070 & 0.076 & 0.050 &  & 0.569 & 0.654 & 0.195 & 0.803 & 0.848 & 0.553 &  & 0.871 & 0.852 & 0.444 & 0.988 & 0.978 & 0.840 \\ 
 & 300 & 30 &  & 0.075 & 0.080 & 0.064 &  & 0.528 & 0.612 & 0.201 & 0.726 & 0.777 & 0.508 &  & 0.839 & 0.807 & 0.454 & 0.972 & 0.955 & 0.800 \\ 
 & 500 & 12 &  & 0.067 & 0.076 & 0.038 &  & 0.691 & 0.785 & 0.224 & 0.898 & 0.932 & 0.658 &  & 0.954 & 0.945 & 0.517 & 0.998 & 0.997 & 0.911 \\ 
 & 500 & 22 &  & 0.071 & 0.075 & 0.051 &  & 0.691 & 0.769 & 0.263 & 0.888 & 0.921 & 0.683 &  & 0.949 & 0.936 & 0.562 & 0.997 & 0.994 & 0.924 \\ 
 & 500 & 41 &  & 0.074 & 0.074 & 0.058 &  & 0.644 & 0.718 & 0.261 & 0.826 & 0.867 & 0.635 &  & 0.931 & 0.902 & 0.556 & 0.995 & 0.985 & 0.898 \\ 
 & \multicolumn{1}{l}{} & \multicolumn{1}{l}{} &  & \multicolumn{1}{l}{} & \multicolumn{1}{l}{} & \multicolumn{1}{l}{} &  & \multicolumn{1}{l}{} & \multicolumn{1}{l}{} & \multicolumn{1}{l}{} & \multicolumn{1}{l}{} & \multicolumn{1}{l}{} & \multicolumn{1}{l}{} &  & \multicolumn{1}{l}{} & \multicolumn{1}{l}{} & \multicolumn{1}{l}{} & \multicolumn{1}{l}{} & \multicolumn{1}{l}{} & \multicolumn{1}{l}{} \\ 
 \multirow{6}{*}{\rotatebox{90}{$H=0.8$}}  & 300 & 9 &  & 0.078 & 0.098 & 0.039 &  & 0.350 & 0.434 & 0.084 & 0.574 & 0.664 & 0.352 &  & 0.690 & 0.697 & 0.277 & 0.930 & 0.916 & 0.655 \\ 
 & 300 & 17 &  & 0.075 & 0.085 & 0.060 &  & 0.311 & 0.372 & 0.114 & 0.525 & 0.585 & 0.378 &  & 0.654 & 0.630 & 0.323 & 0.902 & 0.870 & 0.688 \\ 
 & 300 & 30 &  & 0.077 & 0.084 & 0.073 &  & 0.285 & 0.340 & 0.121 & 0.447 & 0.507 & 0.324 &  & 0.614 & 0.585 & 0.326 & 0.873 & 0.831 & 0.632 \\ 
 & 500 & 12 &  & 0.066 & 0.083 & 0.044 &  & 0.397 & 0.495 & 0.131 & 0.620 & 0.697 & 0.427 &  & 0.740 & 0.737 & 0.359 & 0.948 & 0.926 & 0.748 \\ 
 & 500 & 22 &  & 0.069 & 0.073 & 0.054 &  & 0.379 & 0.448 & 0.145 & 0.587 & 0.645 & 0.430 &  & 0.721 & 0.696 & 0.380 & 0.937 & 0.906 & 0.747 \\ 
 & 500 & 41 &  & 0.067 & 0.072 & 0.060 &  & 0.337 & 0.398 & 0.146 & 0.520 & 0.578 & 0.381 &  & 0.693 & 0.654 & 0.370 & 0.909 & 0.864 & 0.698 \\ 
 & \multicolumn{1}{l}{} & \multicolumn{1}{l}{} &  & \multicolumn{1}{l}{} & \multicolumn{1}{l}{} & \multicolumn{1}{l}{} &  & \multicolumn{1}{l}{} & \multicolumn{1}{l}{} & \multicolumn{1}{l}{} & \multicolumn{1}{l}{} & \multicolumn{1}{l}{} & \multicolumn{1}{l}{} &  & \multicolumn{1}{l}{} & \multicolumn{1}{l}{} & \multicolumn{1}{l}{} & \multicolumn{1}{l}{} & \multicolumn{1}{l}{} & \multicolumn{1}{l}{} \\ 
 \multirow{6}{*}{\rotatebox{90}{$H=0.9$}}  & 300 & 9 &  & 0.097 & 0.121 & 0.052 &  & 0.268 & 0.331 & 0.138 & 0.395 & 0.470 & 0.380 &  & 0.608 & 0.601 & 0.348 & 0.829 & 0.809 & 0.646 \\ 
 & 300 & 17 &  & 0.073 & 0.089 & 0.064 &  & 0.208 & 0.254 & 0.141 & 0.320 & 0.375 & 0.372 &  & 0.537 & 0.518 & 0.350 & 0.778 & 0.740 & 0.641 \\ 
 & 300 & 30 &  & 0.072 & 0.081 & 0.070 &  & 0.174 & 0.213 & 0.123 & 0.264 & 0.316 & 0.285 &  & 0.504 & 0.475 & 0.325 & 0.732 & 0.676 & 0.558 \\ 
 & 500 & 12 &  & 0.078 & 0.097 & 0.063 &  & 0.249 & 0.309 & 0.152 & 0.386 & 0.457 & 0.411 &  & 0.594 & 0.589 & 0.378 & 0.838 & 0.812 & 0.687 \\ 
 & 500 & 22 &  & 0.067 & 0.077 & 0.065 &  & 0.212 & 0.255 & 0.147 & 0.334 & 0.383 & 0.388 &  & 0.550 & 0.521 & 0.372 & 0.801 & 0.756 & 0.667 \\ 
 & 500 & 41 &  & 0.071 & 0.077 & 0.064 &  & 0.186 & 0.225 & 0.126 & 0.288 & 0.335 & 0.309 &  & 0.516 & 0.477 & 0.342 & 0.763 & 0.705 & 0.579 \\ 
\end{tabular}
\caption{Rejection rates of the  self-normalized CuSum (C), the self-normalized Wilcoxon (W) and the self-normalized Van der Waerden (V) change-point tests obtained by subsampling with block length $l_n = \lfloor n^{\gamma}\rfloor$, $\gamma \in \left\{0.4, 0.5, 0.6\right\}$,  for  transformed fractional Gaussian noise time series of length $n$ with Hurst parameter $H$, marginal Pareto(3)-distribution and a change in location of height $h$ after a proportion $\tau$ of the simulated data.}
\label{table:Pareto-distribution}
\end{threeparttable}
\end{table}
\end{landscape}

\begin{landscape}
\thispagestyle{empty}
\begin{table}
\begin{threeparttable}
\footnotesize
\begin{tabular}{c c c c c c c c c c c c c c c c c c c c c}
& & & & & & & & \multicolumn{6}{c}{$\tau = 0.25$} & & \multicolumn{6}{c}{$\tau = 0.5$} \\
 \cline{9-14}   \cline{16-21}\\
 & & & & \multicolumn{3}{c}{$h=0$}  & &  \multicolumn{3}{c}{$h=0.1$}    &  \multicolumn{3}{c}{$h=0.2$} &  & \multicolumn{3}{c}{$h=0.1$}    & \multicolumn{3}{c}{$h=0.2$}    \\ 
   \cline{5-7}  \cline{9-11}     \cline{12-14} \cline{16-18} \cline{19-21}\\
& $n$ & $l_n$ & & W & V & C & & W & V &  C & W & V & C & & W & V & C & W & V & C \\
\cline{2-21}\\ 
 \multirow{6}{*}{\rotatebox{90}{$H=0.6$}} 
& 300 & 9 &  & 0.034 & 0.044 & 0.013 &  & 0.202 & 0.262 & 0.010 & 0.464 & 0.488 & 0.010 &  & 0.414 & 0.586 & 0.014 & 0.784 & 0.867 & 0.012 \\ 
 & 300 & 17 &  & 0.059 & 0.063 & 0.048 &  & 0.279 & 0.311 & 0.042 & 0.553 & 0.541 & 0.043 &  & 0.500 & 0.633 & 0.045 & 0.843 & 0.890 & 0.046 \\ 
 & 300 & 30 &  & 0.070 & 0.072 & 0.063 &  & 0.286 & 0.319 & 0.053 & 0.544 & 0.530 & 0.057 &  & 0.503 & 0.626 & 0.060 & 0.830 & 0.877 & 0.067 \\ 
 & 500 & 12 &  & 0.051 & 0.055 & 0.021 &  & 0.377 & 0.447 & 0.024 & 0.723 & 0.729 & 0.026 &  & 0.666 & 0.812 & 0.023 & 0.948 & 0.970 & 0.026 \\ 
 & 500 & 22 &  & 0.062 & 0.062 & 0.043 &  & 0.414 & 0.471 & 0.048 & 0.759 & 0.743 & 0.047 &  & 0.697 & 0.826 & 0.043 & 0.956 & 0.970 & 0.048 \\ 
 & 500 & 41 &  & 0.066 & 0.071 & 0.050 &  & 0.417 & 0.460 & 0.057 & 0.745 & 0.722 & 0.055 &  & 0.692 & 0.814 & 0.053 & 0.946 & 0.969 & 0.060 \\ 
 & \multicolumn{1}{l}{} & \multicolumn{1}{l}{} &  & \multicolumn{1}{l}{} & \multicolumn{1}{l}{} & \multicolumn{1}{l}{} &  & \multicolumn{1}{l}{} & \multicolumn{1}{l}{} & \multicolumn{1}{l}{} & \multicolumn{1}{l}{} & \multicolumn{1}{l}{} & \multicolumn{1}{l}{} &  & \multicolumn{1}{l}{} & \multicolumn{1}{l}{} & \multicolumn{1}{l}{} & \multicolumn{1}{l}{} & \multicolumn{1}{l}{} & \multicolumn{1}{l}{} \\ 
  \multirow{6}{*}{\rotatebox{90}{$H=0.7$}} 
 & 300 & 9 &  & 0.034 & 0.044 & 0.014 &  & 0.155 & 0.205 & 0.013 & 0.373 & 0.402 & 0.010 &  & 0.336 & 0.491 & 0.012 & 0.685 & 0.784 & 0.010 \\ 
 & 300 & 17 &  & 0.057 & 0.059 & 0.044 &  & 0.215 & 0.250 & 0.046 & 0.457 & 0.447 & 0.042 &  & 0.410 & 0.538 & 0.042 & 0.752 & 0.810 & 0.040 \\ 
 & 300 & 30 &  & 0.069 & 0.070 & 0.051 &  & 0.232 & 0.260 & 0.057 & 0.454 & 0.437 & 0.056 &  & 0.422 & 0.533 & 0.057 & 0.749 & 0.795 & 0.053 \\ 
 & 500 & 12 &  & 0.038 & 0.039 & 0.020 &  & 0.281 & 0.334 & 0.019 & 0.588 & 0.589 & 0.022 &  & 0.527 & 0.685 & 0.022 & 0.878 & 0.919 & 0.022 \\ 
 & 500 & 22 &  & 0.048 & 0.049 & 0.037 &  & 0.315 & 0.357 & 0.040 & 0.618 & 0.605 & 0.046 &  & 0.562 & 0.706 & 0.044 & 0.885 & 0.920 & 0.042 \\ 
 & 500 & 41 &  & 0.056 & 0.057 & 0.046 &  & 0.323 & 0.369 & 0.049 & 0.603 & 0.584 & 0.054 &  & 0.562 & 0.692 & 0.056 & 0.883 & 0.912 & 0.052 \\ 
 & \multicolumn{1}{l}{} & \multicolumn{1}{l}{} &  & \multicolumn{1}{l}{} & \multicolumn{1}{l}{} & \multicolumn{1}{l}{} &  & \multicolumn{1}{l}{} & \multicolumn{1}{l}{} & \multicolumn{1}{l}{} & \multicolumn{1}{l}{} & \multicolumn{1}{l}{} & \multicolumn{1}{l}{} &  & \multicolumn{1}{l}{} & \multicolumn{1}{l}{} & \multicolumn{1}{l}{} & \multicolumn{1}{l}{} & \multicolumn{1}{l}{} & \multicolumn{1}{l}{} \\ 
  \multirow{6}{*}{\rotatebox{90}{$H=0.8$}} 
 & 300 & 9 &  & 0.042 & 0.057 & 0.018 &  & 0.121 & 0.164 & 0.015 & 0.243 & 0.276 & 0.015 &  & 0.220 & 0.324 & 0.014 & 0.471 & 0.557 & 0.013 \\ 
 & 300 & 17 &  & 0.062 & 0.069 & 0.044 &  & 0.156 & 0.178 & 0.044 & 0.286 & 0.293 & 0.050 &  & 0.263 & 0.338 & 0.043 & 0.518 & 0.564 & 0.045 \\ 
 & 300 & 30 &  & 0.067 & 0.072 & 0.056 &  & 0.171 & 0.190 & 0.055 & 0.295 & 0.293 & 0.062 &  & 0.275 & 0.345 & 0.056 & 0.515 & 0.549 & 0.055 \\ 
 & 500 & 12 &  & 0.046 & 0.056 & 0.025 &  & 0.176 & 0.218 & 0.029 & 0.364 & 0.377 & 0.027 &  & 0.340 & 0.454 & 0.025 & 0.643 & 0.706 & 0.027 \\ 
 & 500 & 22 &  & 0.053 & 0.059 & 0.039 &  & 0.200 & 0.225 & 0.049 & 0.388 & 0.385 & 0.045 &  & 0.362 & 0.460 & 0.044 & 0.658 & 0.699 & 0.042 \\ 
 & 500 & 41 &  & 0.060 & 0.063 & 0.045 &  & 0.208 & 0.226 & 0.058 & 0.392 & 0.374 & 0.053 &  & 0.369 & 0.449 & 0.051 & 0.650 & 0.684 & 0.053 \\ 
 & \multicolumn{1}{l}{} & \multicolumn{1}{l}{} &  & \multicolumn{1}{l}{} & \multicolumn{1}{l}{} & \multicolumn{1}{l}{} &  & \multicolumn{1}{l}{} & \multicolumn{1}{l}{} & \multicolumn{1}{l}{} & \multicolumn{1}{l}{} & \multicolumn{1}{l}{} & \multicolumn{1}{l}{} &  & \multicolumn{1}{l}{} & \multicolumn{1}{l}{} & \multicolumn{1}{l}{} & \multicolumn{1}{l}{} & \multicolumn{1}{l}{} & \multicolumn{1}{l}{} \\ 
  \multirow{6}{*}{\rotatebox{90}{$H=0.9$}} 
 & 300 & 9 &  & 0.066 & 0.090 & 0.032 &  & 0.112 & 0.147 & 0.033 & 0.183 & 0.212 & 0.032 &  & 0.175 & 0.234 & 0.031 & 0.316 & 0.374 & 0.029 \\ 
 & 300 & 17 &  & 0.070 & 0.080 & 0.053 &  & 0.113 & 0.130 & 0.060 & 0.188 & 0.195 & 0.056 &  & 0.178 & 0.213 & 0.059 & 0.312 & 0.340 & 0.059 \\ 
 & 300 & 30 &  & 0.073 & 0.079 & 0.062 &  & 0.123 & 0.127 & 0.069 & 0.191 & 0.190 & 0.065 &  & 0.182 & 0.205 & 0.064 & 0.316 & 0.330 & 0.069 \\ 
 & 500 & 12 &  & 0.066 & 0.083 & 0.039 &  & 0.127 & 0.154 & 0.044 & 0.228 & 0.243 & 0.046 &  & 0.187 & 0.248 & 0.039 & 0.371 & 0.415 & 0.037 \\ 
 & 500 & 22 &  & 0.067 & 0.077 & 0.049 &  & 0.128 & 0.144 & 0.058 & 0.233 & 0.233 & 0.059 &  & 0.191 & 0.230 & 0.050 & 0.368 & 0.393 & 0.048 \\ 
 & 500 & 41 &  & 0.069 & 0.078 & 0.056 &  & 0.128 & 0.137 & 0.062 & 0.230 & 0.224 & 0.067 &  & 0.195 & 0.228 & 0.059 & 0.358 & 0.369 & 0.060 \\ 
\end{tabular}
\caption{Rejection rates of the  self-normalized CuSum (C), the self-normalized Wilcoxon (W) and the self-normalized Van der Waerden (V) change-point tests obtained by subsampling with block length $l_n = \lfloor n^{\gamma}\rfloor$, $\gamma \in \left\{0.4, 0.5, 0.6\right\}$,  for  transformed fractional Gaussian noise time series of length $n$ with Hurst parameter $H$, marginal Cauchy-distribution and a change in location of height $h$ after a proportion $\tau$ of the simulated data.}
\label{table:Cauchy-distribution}
\end{threeparttable}
\end{table}
\end{landscape}

\begin{landscape}
\thispagestyle{empty}
\begin{table}
\begin{threeparttable}
\footnotesize
\begin{tabular}{c c c c c c c c c c c c c c c c c c c c c}
& & & & & & & & \multicolumn{6}{c}{$\tau = 0.25$} & & \multicolumn{6}{c}{$\tau = 0.5$} \\
 \cline{9-14}   \cline{16-21}\\
 & & & & \multicolumn{3}{c}{$h=0$}  & &  \multicolumn{3}{c}{$h=0.5$}    &  \multicolumn{3}{c}{$h=1$} &  & \multicolumn{3}{c}{$h=0.5$}    & \multicolumn{3}{c}{$h=1$}    \\ 
   \cline{5-7}  \cline{9-11}     \cline{12-14} \cline{16-18} \cline{19-21}\\
& $n$ & $l_n$ & & W & V & C & & W & V &  C & W & V & C & & W & V & C & W & V & C \\
\cline{2-21}\\ 
 \multirow{6}{*}{\rotatebox{90}{$H=0.6$}} 
& 300 & 9 &  & 0.036 & 0.042 & 0.013 &  & 0.976 & 0.992 & 0.577 & 0.999 & 1.000 & 0.966 &  & 1.000 & 1.000 & 0.889 & 1.000 & 1.000 & 0.999 \\ 
 & 300 & 17 &  & 0.053 & 0.061 & 0.043 &  & 0.983 & 0.994 & 0.752 & 1.000 & 1.000 & 0.988 &  & 1.000 & 1.000 & 0.959 & 1.000 & 1.000 & 1.000 \\ 
 & 300 & 30 &  & 0.071 & 0.072 & 0.061 &  & 0.971 & 0.987 & 0.755 & 0.994 & 0.997 & 0.966 &  & 1.000 & 0.999 & 0.954 & 1.000 & 1.000 & 0.999 \\
 & 500 & 12 &  & 0.046 & 0.051 & 0.024 &  & 1.000 & 1.000 & 0.874 & 1.000 & 1.000 & 0.998 &  & 1.000 & 1.000 & 0.994 & 1.000 & 1.000 & 1.000 \\
 & 500 & 22 &  & 0.069 & 0.060 & 0.042 &  & 0.999 & 1.000 & 0.911 & 1.000 & 1.000 & 0.998 &  & 1.000 & 1.000 & 0.996 & 1.000 & 1.000 & 1.000 \\ 
 & 500 & 41 &  & 0.069 & 0.067 & 0.053 &  & 0.997 & 0.999 & 0.909 & 0.999 & 0.999 & 0.992 &  & 1.000 & 1.000 & 0.993 & 1.000 & 1.000 & 1.000 \\ 
 & \multicolumn{1}{l}{} & \multicolumn{1}{l}{} &  & \multicolumn{1}{l}{} & \multicolumn{1}{l}{} & \multicolumn{1}{l}{} &  & \multicolumn{1}{l}{} & \multicolumn{1}{l}{} & \multicolumn{1}{l}{} & \multicolumn{1}{l}{} & \multicolumn{1}{l}{} & \multicolumn{1}{l}{} &  & \multicolumn{1}{l}{} & \multicolumn{1}{l}{} & \multicolumn{1}{l}{} & \multicolumn{1}{l}{} & \multicolumn{1}{l}{} & \multicolumn{1}{l}{} \\ 
 \multirow{6}{*}{\rotatebox{90}{$H=0.7$}} & 300 & 9 &  & 0.038 & 0.044 & 0.016 &  & 0.933 & 0.970 & 0.416 & 0.995 & 0.997 & 0.900 &  & 0.995 & 0.995 & 0.773 & 1.000 & 1.000 & 0.992 \\ 
 & 300 & 17 &  & 0.060 & 0.059 & 0.040 &  & 0.952 & 0.971 & 0.583 & 0.995 & 0.996 & 0.942 &  & 0.996 & 0.994 & 0.880 & 1.000 & 1.000 & 0.997 \\ 
 & 300 & 30 &  & 0.072 & 0.072 & 0.056 &  & 0.929 & 0.958 & 0.601 & 0.980 & 0.987 & 0.910 &  & 0.992 & 0.990 & 0.879 & 0.999 & 0.999 & 0.989 \\ 
 & 500 & 12 &  & 0.042 & 0.050 & 0.026 &  & 0.992 & 0.997 & 0.736 & 0.999 & 1.000 & 0.984 &  & 1.000 & 1.000 & 0.948 & 1.000 & 1.000 & 1.000 \\ 
 & 500 & 22 &  & 0.051 & 0.061 & 0.042 &  & 0.993 & 0.997 & 0.790 & 0.999 & 1.000 & 0.987 &  & 1.000 & 1.000 & 0.966 & 1.000 & 1.000 & 1.000 \\
 & 500 & 41 &  & 0.063 & 0.066 & 0.055 &  & 0.982 & 0.991 & 0.786 & 0.996 & 0.998 & 0.973 &  & 1.000 & 0.999 & 0.961 & 1.000 & 1.000 & 0.999 \\ 
 & \multicolumn{1}{l}{} & \multicolumn{1}{l}{} &  & \multicolumn{1}{l}{} & \multicolumn{1}{l}{} & \multicolumn{1}{l}{} &  & \multicolumn{1}{l}{} & \multicolumn{1}{l}{} & \multicolumn{1}{l}{} & \multicolumn{1}{l}{} & \multicolumn{1}{l}{} & \multicolumn{1}{l}{} &  & \multicolumn{1}{l}{} & \multicolumn{1}{l}{} & \multicolumn{1}{l}{} & \multicolumn{1}{l}{} & \multicolumn{1}{l}{} & \multicolumn{1}{l}{} \\ 
 \multirow{6}{*}{\rotatebox{90}{$H=0.8$}}  & 300 & 9 &  & 0.035 & 0.054 & 0.020 &  & 0.756 & 0.830 & 0.236 & 0.919 & 0.942 & 0.675 &  & 0.925 & 0.928 & 0.520 & 0.988 & 0.986 & 0.899 \\ 
 & 300 & 17 &  & 0.056 & 0.065 & 0.042 &  & 0.775 & 0.821 & 0.336 & 0.916 & 0.930 & 0.743 &  & 0.931 & 0.923 & 0.630 & 0.985 & 0.980 & 0.928 \\ 
 & 300 & 30 &  & 0.072 & 0.069 & 0.055 &  & 0.739 & 0.785 & 0.354 & 0.870 & 0.888 & 0.691 &  & 0.910 & 0.899 & 0.634 & 0.972 & 0.964 & 0.894 \\ 
 & 500 & 12 &  & 0.045 & 0.053 & 0.030 &  & 0.872 & 0.909 & 0.414 & 0.968 & 0.975 & 0.841 &  & 0.968 & 0.967 & 0.720 & 0.997 & 0.996 & 0.970 \\ 
 & 500 & 22 &  & 0.049 & 0.064 & 0.045 &  & 0.870 & 0.902 & 0.467 & 0.965 & 0.972 & 0.848 &  & 0.966 & 0.962 & 0.758 & 0.997 & 0.995 & 0.972 \\ 
 & 500 & 41 &  & 0.066 & 0.068 & 0.055 &  & 0.835 & 0.871 & 0.466 & 0.939 & 0.947 & 0.808 &  & 0.954 & 0.947 & 0.748 & 0.993 & 0.989 & 0.957 \\ 
 & \multicolumn{1}{l}{} & \multicolumn{1}{l}{} &  & \multicolumn{1}{l}{} & \multicolumn{1}{l}{} & \multicolumn{1}{l}{} &  & \multicolumn{1}{l}{} & \multicolumn{1}{l}{} & \multicolumn{1}{l}{} & \multicolumn{1}{l}{} & \multicolumn{1}{l}{} & \multicolumn{1}{l}{} &  & \multicolumn{1}{l}{} & \multicolumn{1}{l}{} & \multicolumn{1}{l}{} & \multicolumn{1}{l}{} & \multicolumn{1}{l}{} & \multicolumn{1}{l}{} \\ 
 \multirow{6}{*}{\rotatebox{90}{$H=0.9$}} & 300 & 9 &  & 0.065 & 0.092 & 0.042 &  & 0.469 & 0.540 & 0.139 & 0.666 & 0.710 & 0.440 &  & 0.718 & 0.725 & 0.358 & 0.880 & 0.879 & 0.698 \\ 
 & 300 & 17 &  & 0.075 & 0.081 & 0.062 &  & 0.442 & 0.490 & 0.180 & 0.618 & 0.649 & 0.474 &  & 0.692 & 0.682 & 0.411 & 0.859 & 0.844 & 0.719 \\
 & 300 & 30 &  & 0.075 & 0.082 & 0.069 &  & 0.409 & 0.441 & 0.184 & 0.553 & 0.578 & 0.419 &  & 0.658 & 0.647 & 0.409 & 0.816 & 0.797 & 0.664 \\ 
 & 500 & 12 &  & 0.067 & 0.080 & 0.046 &  & 0.514 & 0.578 & 0.202 & 0.722 & 0.762 & 0.541 &  & 0.756 & 0.761 & 0.454 & 0.907 & 0.901 & 0.786 \\ 
 & 500 & 22 &  & 0.069 & 0.074 & 0.057 &  & 0.499 & 0.540 & 0.220 & 0.685 & 0.711 & 0.531 &  & 0.736 & 0.725 & 0.466 & 0.892 & 0.878 & 0.783 \\ 
 & 500 & 41 &  & 0.075 & 0.083 & 0.066 &  & 0.461 & 0.488 & 0.216 & 0.623 & 0.643 & 0.478 &  & 0.708 & 0.690 & 0.459 & 0.863 & 0.844 & 0.732 \\ 
\end{tabular}
\caption{Rejection rates of the  self-normalized CuSum (C), the self-normalized Wilcoxon (W) and the self-normalized Van der Waerden (V) change-point tests obtained by subsampling with block length $l_n = \lfloor n^{\gamma}\rfloor$, $\gamma \in \left\{0.4, 0.5, 0.6\right\}$,  for  transformed fractional Gaussian noise time series of length $n$ with Hurst parameter $H$, marginal $\chi^2$-distribution and a change in location of height $h$ after a proportion $\tau$ of the simulated data.}
\label{table:squares_of_fGn}
\end{threeparttable}
\end{table}
\end{landscape}
\end{appendix}

\section*{Acknowledgments}
The authors would like to thank Prof. Marie Hu{\v{s}}kov\'{a} for encouraging  research on the considered topic.

\bibliographystyle{imsart-nameyear}
\bibliography{bibliography_rank_based_CPA}

\end{document}